\begin{document}
\newcommand {\emptycomment}[1]{} %to remove paragraphs

\baselineskip=15pt
\newcommand{\nc}{\newcommand}
\newcommand{\delete}[1]{}

%\delete{
\nc{\mlabel}[1]{\label{#1}}  % Use this to suppress names
\nc{\mcite}[1]{\cite{#1}}  % Use this to suppress names
\nc{\mref}[1]{\ref{#1}}  % Use this to suppress names
\nc{\meqref}[1]{\eqref{#1}} % Use this to suppress names
\nc{\mbibitem}[1]{\bibitem{#1}} % Use this to show number
%}

\delete{
\nc{\mlabel}[1]{\label{#1}  % Use the next two lines to show names
{\hfill \hspace{1cm}{\bf{{\ }\hfill(#1)}}}}
\nc{\mcite}[1]{\cite{#1}{{\bf{{\ }(#1)}}}}  % Use this lines to show names
\nc{\mref}[1]{\ref{#1}{{\bf{{\ }(#1)}}}}  % Use this lines to show names
\nc{\meqref}[1]{\eqref{#1}{{\bf{{\ }(#1)}}}} % Use this lines to show names
\nc{\mbibitem}[1]{\bibitem[\bf #1]{#1}} % Use this to show name
}

\nc{\mrm}[1]{{\rm #1}}
\nc{\id}{\mrm{id}}  \nc{\Id}{\mrm{Id}}
\nc{\admset}{\{\pm x\}\cup (-x+K^{\times}) \cup K^{\times} x^{-1}}
%%%%%%%%%%%%%%%%%%%%%%%%

\def\a{\alpha}
\def\aa{\mathfrak{A}}
\def\admt{admissible to~}
\def\ad{associative D-}
\def\asi{ASI~}
\def\aybe{aYBe~}
\def\b{\beta}
\def\bd{\boxdot}
\def\bbf{\overline{f}}
\def\bF{\overline{F}}
\def\bbF{\overline{\overline{F}}}
\def\bbbf{\overline{\overline{f}}}
\def\bg{\overline{g}}
\def\bG{\overline{G}}
\def\bbG{\overline{\overline{G}}}
\def\bbg{\overline{\overline{g}}}
\def\bT{\overline{T}}
\def\bt{\overline{t}}
\def\bbT{\overline{\overline{T}}}
\def\bbt{\overline{\overline{t}}}
\def\bR{\overline{R}}
\def\br{\overline{r}}
\def\bbR{\overline{\overline{R}}}
\def\bbr{\overline{\overline{r}}}
\def\bu{\overline{u}}
\def\bU{\overline{U}}
\def\bbU{\overline{\overline{U}}}
\def\bbu{\overline{\overline{u}}}
\def\bw{\overline{w}}
\def\bW{\overline{W}}
\def\bbW{\overline{\overline{W}}}
\def\bbw{\overline{\overline{w}}}
\def\btl{\blacktriangleright}
\def\btr{\blacktriangleleft}
\def\calo{\mathcal{O}}
\def\ci{\circ}
\def\d{\delta}
\def\dd{\diamondsuit}
\def\D{\Delta}
\newcommand{\End}{\mathrm{End}}
\def\frakB{\mathfrak{B}}
\def\G{\kappa}
\def\g{\gamma}
\def\gg{\mathfrak{g}}
\def\hh{\mathfrak{h}}
\def\k{\kappa}
\def\l{\lambda}
\def\ll{\mathfrak{L}}
\def\lll{\mathcal{L}}
\def\rrr{\mathcal{R}}
\def\lh{\leftharpoonup}
\def\lr{\longrightarrow}
\def\N{Nijenhuis~}
\def\o{\otimes}
\def\om{\omega}
\def\opa{\cdot_{A}}
\def\opb{\cdot_{B}}
\def\p{\psi}
\def\sadm{$S$-admissible~}
\def\r{\rho}
\def\ra{\rightarrow}
\def\rep{representation~}
\def\rh{\rightharpoonup}
\def\rr{r^{\#}}
\def\s{\sigma}
\def\st{\star}
\def\ss{\mathfrak{sl}(2)}
\def\sss{$\mathbb{S}$}
\def\ti{\times}
\def\tl{\triangleright}
\def\tr{\triangleleft}
\def\v{\varepsilon}
\def\vp{\varphi}
\def\vth{\vartheta}

%%%%%%%%%%%%%%%%%%%%%%%% Statements
\newtheorem{thm}{Theorem}[section]
\newtheorem{lem}[thm]{Lemma}
\newtheorem{cor}[thm]{Corollary}
\newtheorem{pro}[thm]{Proposition}
\theoremstyle{definition}
\newtheorem{defi}[thm]{Definition}
\newtheorem{ex}[thm]{Example}
\newtheorem{rmk}[thm]{Remark}
\newtheorem{pdef}[thm]{Proposition-Definition}
\newtheorem{condition}[thm]{Condition}
\newtheorem{question}[thm]{Question}
\renewcommand{\labelenumi}{{\rm(\alph{enumi})}}
\renewcommand{\theenumi}{\alph{enumi}}

\nc{\ts}[1]{\textcolor{blue}{Tianshui:~#1}}
\nc{\li}[1]{\textcolor{red}{Li:#1}}

\font\cyr=wncyr10

%%%%%%%%%%%%%%%%%%%%%%%%%%%%%%%%%%%%%%%%%%%%%%%%%%%%%%%%%%%%%%%%%%

\title{Nijenhuis pre-Lie bialgebras, Nijenhuis Lie bialgebras and \sss-equation}

\author[Guo]{Li Guo}
\address{Department of Mathematics and Computer Science, Rutgers University, Newark, NJ 07102, USA}
         \email{liguo@rutgers.edu}

\author[Ma]{Tianshui Ma}
\address{School of Mathematics and Statistics, Henan Normal University, Xinxiang 453007, China}
\email{matianshui@htu.edu.cn}

\date{\today}

\begin{abstract} Two aspects on the important notion of pre-Lie algebras are pre-Lie bialgebras (or left-symmetric bialgebras) with motivation from para-K\"ahler Lie algebras, and Nijenhuis operators on pre-Lie algebras arising from their deformation theory. In this paper, we present a method to construct Nijenhuis operators on a pre-Lie algebras via pseudo-Hessian pre-Lie algebras.
Next, we introduce the notion of Nijenhuis operators on pre-Lie coalgebras and give their constructions, one from a linearly compatible pre-Lie coalgebra structure, and one from pre-Lie bialgebras. We then obtain a bialgebraic structure on Nijenhuis pre-Lie algebras by using dual representations and study their relations with \sss-equations and $\mathcal{O}$-operators. Finally we prove that a Nijenhuis balanced pre-Lie bialgebra produces a Nijenhuis Lie bialgebra.
\end{abstract}

\subjclass[2020]{
17B38,  %Yang-Baxter equations and Rota-Baxter operators
16T25,   %Yang-Baxter equation
16T10,   %bialgebra
%16W99.  %rings and algebras with additional structure/none of above, but in this section
17A30,  %algebras satisfying other identities
%16T05,  %Hopf algebras and their applications
17B62,   %Lie bialgebras, Lie coalgebras
%57R56,   %Topological quantum field theories
%81R60   %noncommutative geometry
37K99. %None of the above, but in this section Dynamical systems and ergodic theory
}

\keywords{Nijenhuis operators; pre-Lie bialgebras; pseudo-Hessian pre-Lie algebras, Lie bialgebras}

\maketitle

\vspace{-1.5cm}

 \tableofcontents

\vspace{-1.5cm}

\allowdisplaybreaks

\section{Introduction }\label{se:ip}
The notion of Nijenhuis operators on a Lie algebra originated from the Nijenhuis tensor from the 1951 study of Nijenhuis \cite{Nij} on differential geometry. Since then Nijenhuis operators on various algebras, such as Lie algebras, associative algebras and pre-Lie algebras, have been applied to many areas in mathematics and mathematical physics, including the deformation theory of algebras, operads, bi-Hamilton systems, Yang-Baxter equations, and Nijenhuis geometry \cite{BKM22,BP,CJGM,GZ,LMMP7,KM,LG,SX1,SX2}.

 A (left) {\bf pre-Lie algebra} is a pair $(A, \ci)$ consisting of a vector space $ A$ and a bilinear map $\ci:  A\o  A\lr  A$ (write $\ci(x\o y)=x\ci y$) such that for all $x, y, z\in  A$,
 \begin{eqnarray}\label{eq:l}
 (x\ci y)\ci z-x\ci (y\ci z)=(y\ci x)\ci z-y\ci (x\ci z).
 \end{eqnarray}
The term pre-Lie algebra comes from the fact that the map $(x,y)\mapsto x\ci y-y\ci x$ defines a Lie bracket, is also known as the Vinberg algebra and the right-symmetric algebra \cite{V}. Pre-Lie algebras have appeared in many contexts, in particular the Gerstenhaber bracket on Hochschild cochains comes from a pre-Lie structure \cite{Ger}. The free pre-Lie algebra on a single generator is identified with the pre-Lie algebra appearing in the renormalization of quantum field theory~\cite{CK}.
A recent survey article on pre-Lie algebras can be found in \cite{Bai3,Man}.

Arising from para-K\"ahler structures, pre-Lie bialgebras (or left symmetric bialgebras) were introduced and studied in \cite{Bai2} as an analog of Lie bialgebras. A pre-Lie bialgebra is equivalent to a para-K\"ahler Lie algebra or a phase space of a Lie algebra, i.e., a symplectic Lie algebra with a decomposition into a direct sum of the underlying vector spaces of two Lagrangian subalgebras. As in the case of the classical Yang-Baxter equation, a solution of the \sss-equation in a pre-Lie algebra is obtained from a coboundary pre-Lie bialgebra. A symmetric solution of the \sss-equation gives a quasitriangular pre-Lie bialgebra, which plays a central role in the construction of Nijenhuis operators on a pre-Lie algebra. For some of the numerous studies on pre-Lie bialgebras, see \cite{BGLM,BSZ,BB,HB,HL,LZB,LYW,LMS,MO,WBLS}.

Thus \N operators and pre-Lie bialgebras both play important roles in mathematics and related areas. In this paper, we show that there is a close connection between these two structures by showing that \N operators on pre-Lie algebras can be derived from pre-Lie bialgebras. Combining these two notions, we also establish the bialgebraic structure on a \N pre-Lie algebra, which is closely related to the notion of an $S$-Nijenhuis \sss-equation, as an analog of the relation between Lie bialgebras and the classical Yang-Baxter equation.
Through a devise to produce Lie bialgebras from pre-Lie bialgebras, we are able to use \N pre-Lie bialgebras to provide concrete examples of \N Lie bialgebras.

The paper is organized as follows.

In Section \ref{se:preliebialg}, we discuss dual quasitriangular pre-Lie bialgebras and pseudo-Hessian  pre-Lie algebras. Section \mref{se:tri} is devoted to applying  symmetric solutions of the co-\sss-equation to the construction of pre-Lie bialgebras, called dual quasitriangular pre-Lie bialgebra (Theorem \ref{thm:cqt}). The characterization of dual quasitriangular pre-Lie bialgebras by symmetric bilinear forms is obtained (Proposition \ref{pro:cqt}). In Section \ref{se:sym}, we prove that a pseudo-Hessian  pre-Lie algebra can be obtained from a dual quasitriangular pre-Lie bialgebra (Proposition \ref{pro:de:sym}).

In Section \ref{se:suff}, we derive a sufficient condition to obtain Nijenhuis operators on pre-Lie algebras. More precisely, let $(A, \circ, \om)$ be a pseudo-Hessian  pre-Lie algebra, where $\om\in (A\o  A)^*$. Then for $r\in A\o  A$, assume that $(A, \circ, r, \D_r)$ is a quasitriangular pre-Lie bialgebra, and further, $(A, \D_r, \om, \ci_{\om})$ is a dual quasitriangular pre-Lie bialgebra, then the linear map ${N:=(\om\o \id)(\id\o r)}$ is a Nijenhuis operator on $(A, \ci)$ (Theorem \ref{thm:ln}).

In Section \ref{se:cln}, we introduce the notions of a \N pre-Lie coalgebra and its corepresentations, and give an interpretation of them through compatible pre-Lie coalgebras (Theorem \ref{thm:comcn}). We also present a construction of Nijenhuis operators on pre-Lie coalgebras by using pseudo-Hessian pre-Lie coalgebra and (dual) quasitriangular pre-Lie bialgebraic structures (Theorem \ref{thm:cln}).

Section \ref{se:rep} is devoted to the bialgebraic theory of \N pre-Lie algebras. In Section \ref{se:drep}, we introduce the notions of a \rep of a \N pre-Lie algebra (Definition \ref{de:repnplie}) and dual \rep (Lemma \ref{lem:dualrep}), which plays a central role in establishing the bialgebraic theory of \N pre-Lie algebras. In Section \ref{se:equi}, the concepts of \N pre-Lie bialgebras (Definition \ref{de:nliebialg}) and a matched pair of two \N pre-Lie algebras (Definition \ref{de:mpnlie}) are presented. The equivalence between \N pre-Lie bialgebras and matched pair of two special pre-Lie algebras is obtained (Theorem \ref{thm:matchandnlieb}). Then we focus on the quasitriangular \N pre-Lie bialgebras (Theorem \ref{thm:nqt}). We introduce the notion of $S$-\N \sss-equation (Definition \ref{de:cybe-1}) and also provide an equivalent description in an operator form (Theorem \ref{thm:rr}). Some concrete examples are provided (Example \ref{ex:nliebialg}).

In the last Section \ref{se:balance}, we show that a balanced pre-Lie bialgebra gives rise to a Lie bialgebra (Theorem \ref{thm:balance}), and then give an application to obtain Nijenhuis Lie bialgebras (Theorem \ref{thm:balance-1}).

The framework of this article is shown in the following diagram:

\vspace{5mm}
\begin{center}
\hspace{-45mm}
\unitlength 1mm % = 2.85pt
\linethickness{0.4pt}
\ifx\plotpoint\undefined\newsavebox{\plotpoint}\fi % GNUPLOT compatibility
\begin{picture}(116.25,37.25)(0,0)
\put(1.75,17.25){$\mathcal{O}$-operator}
\thicklines
%\vector[both](14,18.25)(31.5,18.25)
\put(38.5,18.25){\vector(1,0){.07}}\put(21,18.25){\vector(-1,0){.07}}
\put(21,18.25){\line(1,0){17.5}}
%\end
\put(21.25,20){Thm \ref{thm:las2}}
\put(38.75,17.25){$S$-Nij \sss-equation}
\put(68.75,18.25){\vector(1,0){15.5}}
\put(68.25,20){Thm \ref{thm:nqt}}
\put(84.25,17.25){Nij pre-Lie bialg}
%\vector[both](74.75,15.75)(74.75,11)
\put(93.75,8){\vector(0,-1){.07}}%\put(93.75,15.75){\vector(0,1){.07}}
\put(93.75,15.75){\line(0,-1){7.75}}
%\end
\put(84.25,4.30){Nij Lie bialg}
\put(95.25,10.5){Thm \ref{thm:balance-1}}
\put(129.7,18.25){\vector(1,0){.07}}\put(112.8,18.25){\vector(-1,0){.07}}
\put(112.8,18.25){\line(1,0){17.15}}
\put(130.25,19.15){matched pair of}
\put(130.25,15.15){Nij pre-Lie algs}
\put(111.95,20){Thm \ref{thm:matchandnlieb}}
\put(95.75,29.25){\vector(0,-1){8.25}}
\put(96.25,24.25){Thm \ref{rmk:n-cn}}
%\vector[middle](57,29.25)(74.5,29.25)
\put(65.75,29.25){\vector(1,0){.07}}
\put(32,29.25){\line(1,0){47.5}}
%\end
%\vector[middle](91.75,29.5)(75,29.5)
\put(105.38,29.25){\vector(-1,0){.07}}
\put(121.75,29.25){\line(-1,0){46.75}}
%\end
%\vector[middle](57,34)(57,29.5)
\put(32,30.25){\vector(0,-1){.07}}\put(32,34){\line(0,-1){4.5}}
%\end
%\vector[middle](91.75,34.75)(91.75,29.75)
\put(121.75,30.25){\vector(0,-1){.07}}\put(121.75,34.75){\line(0,-1){5}}
%\end
\put(1.5,35.75){Construction of Nij pre-Lie alg (Thm \ref{thm:ln})}
\put(79.75,35.75){Construction of Nij pre-Lie coalg (Thm \ref{thm:cln})}
\end{picture}
\end{center}

 \smallskip
 \noindent{\bf Notations:} Throughout this paper, we fix a field $K$. All vector spaces, tensor products, and linear homomorphisms are over $K$. We denote by $\id_M$ the identity map from $M$ to $M$.

\section{Dual quasitriangular pre-Lie bialgebras and pseudo-Hessian  pre-Lie algebras}\label{se:preliebialg} 
In order to give the construction of Nijenhuis operators on pre-Lie algebras, in this section, we introduce the notion of dual quasitriangular pre-Lie bialgebras which can lead to the structure of pseudo-Hessian pre-Lie algebras.

\subsection{Dual quasitriangular pre-Lie bialgebras}\label{se:tri} First let us recall the concepts of pre-Lie coalgebras \cite{LZB,ML}  and pre-Lie bialgebras \cite{Bai2,LZB}.

 \begin{defi}
\begin{enumerate}
\item A {\bf pre-Lie coalgebra} is a pair $(A, \D)$ consisting of a vector space $ A$ and a linear map $\D:  A\lr  A\o  A$ (we use the Sweedler notation \cite{Sw} and write $\D(x)=x_{(1)}\o x_{(2)}(:=\sum x_{(1)}\o x_{(2)})$) such that for all $x\in A$,
 \begin{eqnarray}\label{eq:cl}
 x_{(1)(1)}\o x_{(1)(2)}\o x_{(2)}-x_{(1)}\o x_{(2)(1)}\o x_{(2)(2)}=x_{(1)(2)}\o x_{(1)(1)}\o x_{(2)}-x_{(2)(1)}\o x_{(1)}\o x_{(2)(2)}.
 \end{eqnarray}
\item A {\bf pre-Lie bialgebra} is a triple $(A, \ci, \D)$ consisting of a pre-Lie algebra $(A, \ci)$ and a pre-Lie coalgebra $(A, \D)$ such that, for all $x, y\in  A$, the following conditions hold:
 \begin{eqnarray}
 &&\hspace{-6mm}(x\ci y)_{(1)}\o (x\ci y)_{(2)}-(y\ci x)_{(1)}\o (y\ci x)_{(2)}\label{eq:b1}\\
 &=&\hspace{-3mm}x\ci y_{(1)}\o y_{(2)}+y_{(1)}\o x\ci y_{(2)}-y_{(1)}\o y_{(2)}\ci x-y\ci x_{(1)}\o x_{(2)}-x_{(1)}\o y\ci x_{(2)}+x_{(1)}\o x_{(2)}\ci y,\nonumber\\
 &&\hspace{-6mm}(x\ci y)_{(1)}\o (x\ci y)_{(2)}-(x\ci y)_{(2)}\o (x\ci y)_{(1)}\label{eq:b2}\\
 &=&\hspace{-3mm}x_{(1)}\o x_{(2)}\ci y+y_{(1)}\o x\ci y_{(2)}-y_{(2)}\o x\ci y_{(1)}-x_{(2)}\ci y\o x_{(1)}-x\ci y_{(2)}\o y_{(1)}+x\ci y_{(1)}\o y_{(2)}.\nonumber
 \end{eqnarray}
\end{enumerate}
\label{de:lb}
\end{defi}

The following is the pre-Lie analog of the construction of Lie bialgebras from $r$-matrices \cite{Dr}.

\begin{pro} \label{pro:qt} {\em $($\cite[Proposition 6.1]{Bai2}$)$} Let $(A, \ci)$ be a pre-Lie algebra. For $r=\sum\limits^n_{i=1} a_i\o b_i\in  A\o A$, define
 \begin{equation}\label{eq:cop}
 \D_r(x)=\sum^n_{i=1} (x\ci a_i\o b_i+a_i\o x\ci b_i-a_i\o b_i\ci x).
 \end{equation}
 If $r\in  A\o A$ is a symmetric solution of the following {\bf \sss-equation} in $(A, \ci)$:
 \begin{equation}\label{eq:cybe}
 \sum^n_{i,j=1} (a_i\o b_i\ci a_j\o b_j+a_i\o a_j\o b_i\ci b_j)
 =\sum^n_{i,j=1} (a_i\ci a_j\o b_i\o b_j+a_i\o a_j\o b_j\ci b_i),
 \end{equation}
 then $(A, \ci, \D_r)$ is a pre-Lie bialgebra. We call this pre-Lie bialgebra {\bf quasitriangular} and denote it by $(A, \ci, r, \D_r)$.
 \end{pro}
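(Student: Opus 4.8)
The plan is to verify directly the three defining conditions of a pre-Lie bialgebra for the triple $(A,\ci,\D_r)$: the co-pre-Lie identity \eqref{eq:cl} for $\D_r$, and the two compatibility conditions \eqref{eq:b1} and \eqref{eq:b2}. Throughout I would write $L_x(y)=x\ci y$ and $R_x(y)=y\ci x$ for the left and right multiplication operators, so that \eqref{eq:cop} reads
\[
\D_r(x)=(L_x\o\id+\id\o L_x-\id\o R_x)(r),
\]
displaying $\D_r$ as a coboundary built from $r$. Two operator consequences of the pre-Lie axiom \eqref{eq:l} will be used repeatedly: $[L_x,L_y]=L_{x\ci y-y\ci x}$, and the mixed relation $L_xR_y-R_yL_x=R_{x\ci y}-R_yR_x$. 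I would also record that $r$ being symmetric means $\sum_i a_i\o b_i=\sum_i b_i\o a_i$, which lets me interchange the two legs of $r$ freely inside the sums.

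I would treat the two compatibility conditions first. For \eqref{eq:b1}, whose left-hand side is $\D_r(x\ci y)-\D_r(y\ci x)$, I would expand using \eqref{eq:cop} and then collapse the result with the two operator identities above; this is the statement that $\D_r$ is a $1$-cocycle, and I expect it to reproduce the six terms on the right-hand side using only the pre-Lie axioms. For \eqref{eq:b2}, whose left-hand side is $\D_r(x\ci y)-\tau\D_r(x\ci y)$ with $\tau$ the flip on $A\o A$, the symmetry $\tau(r)=r$ is what makes the two halves combine correctly, and the pre-Lie axiom then converts the residue into the prescribed six-term right-hand side.

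The heart of the argument is the co-pre-Lie identity \eqref{eq:cl}, which asserts that the element
\[
P(x):=(\D_r\o\id)\D_r(x)-(\id\o\D_r)\D_r(x)\in A^{\o 3}
\]
is fixed by the transposition $\tau_{12}$ of its first two tensor legs; this is where the $\mathbb{S}$-equation \eqref{eq:cybe} is genuinely needed. Expanding $P(x)$ with \eqref{eq:cop}, each composite contributes nine terms, and every one of the resulting terms carries a single factor of $x$ multiplied, through $L$ or $R$, against a quadratic expression in the legs of $r$. The plan is to use \eqref{eq:l} and the operator identities to rewrite the difference $P(x)-\tau_{12}P(x)$ as the action of $x$, via $L$ and $R$ on the three tensor legs, on one fixed quadratic-in-$r$ element $C(r)\in A^{\o 3}$; the symmetry $\tau(r)=r$ is precisely what permits this factoring and identifies $C(r)$ with the difference of the two sides of \eqref{eq:cybe}. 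Since $r$ solves the $\mathbb{S}$-equation, $C(r)=0$, so $P(x)=\tau_{12}P(x)$ and \eqref{eq:cl} follows.

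The main obstacle is purely organizational: the co-pre-Lie computation involves some thirty-six triple-tensor terms, and the real danger is mismatching a leg of $r$ or a factor of $x$ when applying the pre-Lie axiom and when relabeling via the symmetry of $r$, so that the surviving quadratic residue fails to be exactly the $\mathbb{S}$-equation expression. I would control this by fixing once and for all a normal form for the triple-tensor terms, expanding all the composites into that form, and cancelling in blocks sorted by which leg carries $x$. No step requires an idea beyond the three ingredients — the pre-Lie axiom \eqref{eq:l}, the symmetry of $r$, and the $\mathbb{S}$-equation \eqref{eq:cybe} — so once the bookkeeping is arranged the identities \eqref{eq:cl}, \eqref{eq:b1} and \eqref{eq:b2} all fall out.
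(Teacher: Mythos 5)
Your proposal is correct and is essentially the proof behind the paper's citation: the paper proves nothing here, quoting \cite[Proposition 6.1]{Bai2}, and that result is established by exactly this kind of direct verification. Your division of labor — \eqref{eq:b1} from the pre-Lie identity alone, \eqref{eq:b2} from the pre-Lie identity plus the symmetry $\tau(r)=r$, and the coalgebra identity \eqref{eq:cl} as the sole place the \sss-equation is needed, via the coboundary factoring — also mirrors the paper's own proof of the dual result, Theorem \ref{thm:cqt}, where the compatibilities are checked from the symmetry of $\om$ and the co-\sss-equation enters only in verifying the pre-Lie algebra axiom.
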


 Now we give a characterization of quasitriangular pre-Lie bialgebras.

 \begin{pro}\label{pro:qt-1} Let $(A, \ci)$ be a pre-Lie algebra, $r\in  A\o A$ and $\D_r$ be defined by Eq.~\meqref{eq:cop}. Then Eq.\,(\ref{eq:cybe}) holds if and only if
 \begin{eqnarray}\label{eq:qt2}
 (\id\o \D_r)(r)=\sum^n_{i,j=1} a_i\ci a_j\o b_i\o b_j.
 \end{eqnarray}
 Further if $r$ is symmetric, then the quadruple $(A, \ci, r, \D_r)$ is a quasitriangular pre-Lie bialgebra if and only if
 \begin{eqnarray}\label{eq:qt2-1}
 (\D_r\o \id)(r)=\sum^n_{i,j=1} a_i\o a_j\o b_i\ci b_j.
 \end{eqnarray}
 \end{pro}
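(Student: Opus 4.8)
The plan is to prove the two assertions separately: the first by a direct expansion, and the second by reducing it to the first and then exploiting the symmetry of $r$.

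For the first equivalence I would substitute \meqref{eq:cop} into $(\id\o\D_r)(r)=\sum_i a_i\o\D_r(b_i)$, which after expanding produces three families of terms,
\[
(\id\o\D_r)(r)=\sum_{i,j}\big(a_i\o (b_i\ci a_j)\o b_j+a_i\o a_j\o (b_i\ci b_j)-a_i\o a_j\o (b_j\ci b_i)\big).
\]
Comparing with \meqref{eq:cybe}: the left-hand side of \meqref{eq:cybe} supplies the first two families, while the term $\sum a_i\o a_j\o b_j\ci b_i$ from its right-hand side is exactly the third family. Hence $(\id\o\D_r)(r)-\sum a_i\ci a_j\o b_i\o b_j$ is precisely the difference of the two sides of \meqref{eq:cybe}, so \meqref{eq:qt2} holds iff \meqref{eq:cybe} does. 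This step is routine bookkeeping.

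For the second equivalence, write $S:=(\id\o\D_r)(r)-\sum_{i,j}a_i\ci a_j\o b_i\o b_j$ for the defect of \meqref{eq:cybe}; by the first part, being quasitriangular (for symmetric $r$) is the same as $S=0$, so it suffices to show that for symmetric $r$ one has \meqref{eq:qt2-1} iff $S=0$. Let $\tau_{12},\tau_{13}$ denote the transpositions of the indicated legs of $A\o A\o A$. I would establish two facts. (B) By the same expansion, $(\D_r\o\id)(r)-\sum_{i,j}a_i\o a_j\o b_i\ci b_j$, after using $r=\sum b_i\o a_i$ to interchange the two slots in each copy of $r$ and relabeling, equals $\tau_{13}(S)-S$ (the families coming from $b_i\ci b_j$ and $a_i\ci a_j$ cancel); thus \meqref{eq:qt2-1} is equivalent to $\tau_{13}(S)=S$. (A) Using the symmetry $r=\sum b_i\o a_i$ to swap $a_i\leftrightarrow b_i$ inside each monomial (together with relabeling), $\tau_{12}$ interchanges the four families of $S$ as first$\leftrightarrow$fourth and second$\leftrightarrow$third; since these pairs carry opposite signs, this yields the identity $\tau_{12}(S)=-S$.

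The forward direction is then immediate: $S=0$ forces $\tau_{13}(S)=S$ and hence \meqref{eq:qt2-1}. The converse is where I expect the main obstacle to lie, since \meqref{eq:qt2-1} gives only that $S$ is $\tau_{13}$-symmetric, a priori weaker than $S=0$. Fact (A) closes this gap: because $\tau_{12}$ and $\tau_{13}$ generate the symmetric group on three letters, $\tau_{13}(S)=S$ propagates to full invariance, e.g. $\tau_{23}(S)=\tau_{12}\tau_{13}\tau_{12}(S)=S$ and then $\tau_{12}(S)=\tau_{13}\tau_{23}\tau_{13}(S)=S$; comparing with $\tau_{12}(S)=-S$ gives $2S=0$, so $S=0$ provided $\mathrm{char}\,K\neq 2$ (as is implicit throughout this theory). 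Thus \meqref{eq:qt2-1} is equivalent to $S=0$, i.e. to \meqref{eq:cybe}. The only genuinely delicate point is the careful Sweedler/index accounting in (A) and (B): one must check that each leg-swap is licensed by $r=\sum a_i\o b_i=\sum b_i\o a_i$ rather than by an illegitimate relabeling, and keep straight which manipulations are honest identities and which use the symmetry of $r$.
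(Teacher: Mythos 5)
Your bookkeeping is correct: Eq.~\eqref{eq:qt2} is a verbatim rearrangement of Eq.~\eqref{eq:cybe}, and for symmetric $r$ both of your structural facts hold --- I checked that $\tau_{12}(S)=-S$ and that $(\D_r\o\id)(r)-\sum_{i,j}a_i\o a_j\o b_i\ci b_j=\tau_{13}(S)-S$, with every leg-swap licensed by $r=\sum_i a_i\o b_i=\sum_i b_i\o a_i$ (your pairing ``first$\leftrightarrow$fourth, second$\leftrightarrow$third'' is consistent with the order in which you listed the families). The paper gives no written argument (its proof is one sentence declaring a direct computation), so the real comparison is with the natural direct computation, and against that benchmark your route is genuinely different and strictly weaker: propagating $\tau_{13}(S)=S$ around the $S_3$-orbit together with $\tau_{12}(S)=-S$ only yields $2S=0$, so you must assume $\mathrm{char}\,K\neq 2$. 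That hypothesis is not in the paper --- $K$ is an arbitrary fixed field --- and it is not needed for this proposition, so as a proof of the stated result yours has a genuine (if small) gap at characteristic $2$.

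The gap is avoidable, and the fix is shorter than your argument. Since $r$ is symmetric, expand the outer leg of $(\D_r\o\id)(r)$ using $r=\sum_i b_i\o a_i$: then $(\D_r\o\id)(r)=\sum_i\D_r(b_i)\o a_i=c\big((\id\o\D_r)(r)\big)$, where $c(x\o y\o z):=y\o z\o x$ is the cyclic shift. Likewise $c\big(\sum_{i,j}a_i\ci a_j\o b_i\o b_j\big)=\sum_{i,j}b_i\o b_j\o a_i\ci a_j=\sum_{i,j}a_i\o a_j\o b_i\ci b_j$, the last equality by the swaps $a_i\leftrightarrow b_i$, $a_j\leftrightarrow b_j$. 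Hence Eq.~\eqref{eq:qt2-1} is precisely the invertible map $c$ applied to both sides of Eq.~\eqref{eq:qt2}, so for symmetric $r$ the two conditions are equivalent over any field; combining this with the first part and Proposition~\ref{pro:qt} finishes the second assertion. In short, your proof is correct where it applies, but the $S_3$-propagation is an unnecessary detour whose only effect is to exclude characteristic $2$.
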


 \begin{proof} It is a direct computation by Eqs.~(\mref{eq:cop}) and (\mref{eq:cybe}).
 \end{proof}

To state the first main result, we need another notion.

\begin{defi}\label{de:ccybe} Let $(A, \D)$ be a pre-Lie coalgebra and $\om\in (A\o A)^*$ a symmetric bilinear form (in the sense of $\om(x, y)=\om(y, x)$). For all $x, y, z\in  A$, the equation
 \begin{eqnarray}\label{eq:ccybe}
 \om(x, y_{(1)})\om(y_{(2)}, z)+\om(x, z_{(1)})\om(y, z_{(2)})=\om(x_{(1)}, y)\om(x_{(2)}, z)+\om(y, z_{(1)})\om(x, z_{(2)})
 \end{eqnarray}
 is called a {\bf co-\sss-equation in $(A, \D)$}.
 \end{defi}

 \begin{thm}\label{thm:cqt} Let $(A, \D)$ be a pre-Lie coalgebra and $\om\in (A\o A)^*$ be a symmetric solution of the co-\sss-equation in $(A, \D)$. Define a multiplication on $ A$ by
 \begin{eqnarray}\label{eq:p}
 x\ci_{\om} y:=x_{(1)}\om(x_{(2)}, y)+y_{(1)}\om(x, y_{(2)})-\om(x, y_{(1)})y_{(2)},\quad \forall~x, y\in  A.
 \end{eqnarray}
 Then $(A, \ci_{\om}, \D)$ is a pre-Lie bialgebra. We call this bialgebra a {\bf dual quasitriangular pre-Lie bialgebra} and denote it by $(A, \D, \om, \ci_{\om})$.
 \end{thm}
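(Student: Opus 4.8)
The plan is to prove Theorem~\ref{thm:cqt} by \emph{dualizing} Proposition~\ref{pro:qt}: the passage $(A,\D,\om)\mapsto(A,\ci_\om,\D)$ is precisely the linear transpose of the quasitriangular passage $(A,\ci,r)\mapsto(A,\ci,\D_r)$, with the symmetric form $\om$ in the role of the $r$-matrix and the co-\sss-equation in the role of the \sss-equation. Assuming first that $A$ is finite dimensional, I would transpose the given pre-Lie coalgebra $(A,\D)$ to a pre-Lie algebra $(A^*,\cdot)$, whose product is determined by $\langle \xi\cdot\eta,\, x\rangle=\langle\xi, x_{(1)}\rangle\langle\eta, x_{(2)}\rangle$ for $\xi,\eta\in A^*$ and $x\in A$; pairing against $\xi\o\eta\o\zeta$ shows that the coalgebra identity \eqref{eq:cl} transposes exactly into the pre-Lie identity \eqref{eq:l} for this product. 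Viewing $\om\in(A\o A)^*\cong A^*\o A^*$ as a tensor $r\in A^*\o A^*$, its symmetry as a bilinear form becomes the symmetry of $r$.

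The two substantive checks are then pairing computations. First, pairing the \sss-equation \eqref{eq:cybe} for $r$ in $(A^*,\cdot)$ against an arbitrary $x\o y\o z$ and expanding every product through $\D$ recovers, summand by summand, the co-\sss-equation \eqref{eq:ccybe}; hence $r$ solves \eqref{eq:cybe} in $(A^*,\cdot)$ if and only if $\om$ solves \eqref{eq:ccybe} in $(A,\D)$. Second, pairing the comultiplication $\D_r$ of \eqref{eq:cop} against $x\o y$ and again expanding through $\D$ produces exactly the three terms $x_{(1)}\om(x_{(2)},y)+y_{(1)}\om(x,y_{(2)})-\om(x,y_{(1)})y_{(2)}$ of \eqref{eq:p}, so $\ci_\om$ is the transpose of $\D_r$. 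With these two dictionaries established, Proposition~\ref{pro:qt} guarantees that $(A^*,\cdot,\D_r)$ is a quasitriangular pre-Lie bialgebra, and transposing its three defining conditions (the pre-Lie identity for $\D_r$, and the compatibilities \eqref{eq:b1} and \eqref{eq:b2}) yields exactly that $(A,\ci_\om,\D)$ is a pre-Lie bialgebra.

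The genuine obstacle is that this transposition is only literally valid in finite dimensions, since otherwise $(A\o A)^*\neq A^*\o A^*$ and $\om$ need not arise from an $r$-matrix on $A^*$. For the general statement I would instead verify the three axioms \eqref{eq:l}, \eqref{eq:b1} and \eqref{eq:b2} directly for $\ci_\om$, substituting \eqref{eq:p} and reducing with the coalgebra identity \eqref{eq:cl}, the co-\sss-equation \eqref{eq:ccybe}, and the symmetry $\om(x,y)=\om(y,x)$. The pairing computations above are not wasted: they predict precisely which groupings of terms must cancel, so the direct verification merely reproduces those cancellations intrinsically on $A$. I expect the heaviest bookkeeping in the pre-Lie identity \eqref{eq:l}, where $\ci_\om$ is nested and one must first invoke \eqref{eq:cl} to untangle the iterated coproducts $x_{(1)(1)}\o x_{(1)(2)}\o x_{(2)}$ before \eqref{eq:ccybe} can be brought to bear.
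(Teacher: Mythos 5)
Your proposal is correct, and the route you fall back on for the general case is exactly the paper's proof: the authors check \eqref{eq:b1} and \eqref{eq:b2} directly from \eqref{eq:p} and the symmetry of $\om$ (this step is merely asserted), and then verify the pre-Lie identity \eqref{eq:l} for $\ci_\om$ by precisely the computation you predict --- substitute \eqref{eq:p}, reorganize the iterated coproducts via \eqref{eq:cl}, and conclude with \eqref{eq:ccybe}. What you do differently is the dualization scaffolding, which the paper does not use for this theorem, although it endorses exactly this heuristic immediately afterwards: the proof of Proposition \ref{pro:cqt} is said to be ``obtained by taking the dual to Proposition \ref{pro:qt-1}, following the proof of Theorem \ref{thm:cqt}''. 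Your dictionaries are right: pairing shows that \eqref{eq:cl} for $\D$ is the transpose of \eqref{eq:l} for the dual product on $A^*$, that the \sss-equation \eqref{eq:cybe} for $r=\om\in A^*\o A^*$ pairs into the co-\sss-equation \eqref{eq:ccybe}, and that $\ci_\om$ is the transpose of $\D_r$; and you correctly identify that this argument is confined to finite dimensions, which is why it cannot replace the direct verification and only serves to predict the cancellation pattern. So your framing buys genuine conceptual content (it explains where \eqref{eq:p} and \eqref{eq:ccybe} come from), at the cost of in effect proving the theorem twice. One small correction to the dictionary: under transposition the two compatibility axioms exchange roles rather than map to themselves --- the transpose of $\xi\o\eta\mapsto \D_r(\xi\cdot\eta)-\D_r(\eta\cdot\xi)$ (the left side of \eqref{eq:b1} for $(A^*,\cdot,\D_r)$) is $x\o y\mapsto \D(x\ci_\om y)-\tau\D(x\ci_\om y)$ (the left side of \eqref{eq:b2} for $(A,\ci_\om,\D)$, with $\tau$ the flip), and vice versa --- so ``transposing its three defining conditions yields exactly'' should be read with that swap; this does not affect the conclusion.
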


 \begin{proof} Eqs.(\mref{eq:b1}) and (\mref{eq:b2}) can be checked by Eq.\,(\mref{eq:p}) and the symmetry of $\om$. Next we verify that $(A, \ci_\om)$ is a pre-Lie algebra. For all $x, y, z\in A$, we compute
 \begin{eqnarray*}
 &&\hspace{-10mm}(x\ci_\om y)\ci_\om z-x\ci_\om (y\ci_\om z)-(y\ci_\om x)\ci_\om z+y\ci_\om (x\ci_\om z)\\
 &&\stackrel{(\mref{eq:p})}{=}-\om(x, y_{(1)})y_{(2)(1)}\om(y_{(2)(2)}, z)-\om(x, y_{(1)})z_{(1)}\om(y_{(2)}, z_{(2)})+\om(x, y_{(1)})\om(y_{(2)}, z_{(1)})z_{(2)}\\
 &&\hspace{8mm}-x_{(1)}\om(x_{(2)}, y_{(1)})\om(y_{(2)}, z)-y_{(1)(1)}\om(x, y_{(1)(2)})\om(y_{(2)}, z)+\om(x, y_{(1)(1)})y_{(1)(2)}\om(y_{(2)}, z)\\
 &&\hspace{8mm}-x_{(1)}\om(x_{(2)}, z_{(1)})\om(y, z_{(2)})-z_{(1)(1)}\om(x, z_{(1)(2)})\om(y, z_{(2)})+\om(x, z_{(1)(1)})z_{(1)(2)}\om(y, z_{(2)})\\
 &&\hspace{8mm}+\om(y, z_{(1)})x_{(1)}\om(x_{(2)}, z_{(2)})+\om(y, z_{(1)})z_{(2)(1)}\om(x, z_{(2)(2)})-\om(y, z_{(1)})\om(x, z_{(2)(1)})z_{(2)(2)}\\
 &&\hspace{8mm}+\om(y, x_{(1)})x_{(2)(1)}\om(x_{(2)(2)}, z)+\om(y, x_{(1)})z_{(1)}\om(x_{(2)}, z_{(2)})-\om(y, x_{(1)})\om(x_{(2)}, z_{(1)})z_{(2)}\\
 &&\hspace{8mm}+y_{(1)}\om(y_{(2)}, x_{(1)})\om(x_{(2)}, z)+x_{(1)(1)}\om(y, x_{(1)(2)})\om(x_{(2)}, z)-\om(y, x_{(1)(1)})x_{(1)(2)}\om(x_{(2)}, z)\\
 &&\hspace{8mm}+y_{(1)}\om(y_{(2)}, z_{(1)})\om(x, z_{(2)})+z_{(1)(1)}\om(y, z_{(1)(2)})\om(x, z_{(2)})-\om(y, z_{(1)(1)})z_{(1)(2)}\om(x, z_{(2)})\\
 &&\hspace{8mm}-\om(x, z_{(1)})y_{(1)}\om(y_{(2)}, z_{(2)})-\om(x, z_{(1)})z_{(2)(1)}\om(y, z_{(2)(2)})+\om(x, z_{(1)})\om(y, z_{(2)(1)})z_{(2)(2)}\\
 &&\stackrel{(\mref{eq:cl})}{=}\hspace{3mm}y_{(1)}(-\om(x, y_{(2)(1)})\om(y_{(2)(2)}, z)+\om(y_{(2)}, x_{(1)})\om(x_{(2)}, z)+\om(y_{(2)}, z_{(1)})\om(x, z_{(2)})\\
 &&\hspace{8mm}-\om(x, z_{(1)})\om(y_{(2)}, z_{(2)}))-z_{(1)}(\om(x, y_{(1)})\om(y_{(2)}, z_{(2)})-\om(x, z_{(2)(1)})\om(y, z_{(2)(2)})\\
 &&\hspace{8mm}+\om(y, z_{(2)(1)})\om(x, z_{(2)(2)})+\om(y, x_{(1)})\om(x_{(2)}, z_{(2)}))+(\om(x, y_{(1)})\om(y_{(2)}, z_{(1)})\\
 &&\hspace{8mm}-\om(x, z_{(1)(2)})\om(y, z_{(1)(1)})-\om(y, x_{(1)})\om(x_{(2)}, z_{(1)})+\om(x, z_{(1)(1)})\om(y, z_{(1)(2)}))z_{(2)}\\
 &&\hspace{8mm}-x_{(1)}(\om(x_{(2)}, y_{(1)})\om(y_{(2)}, z)-\om(x_{(2)}, z_{(1)})\om(y, z_{(2)})+\om(y, z_{(1)})\om(x_{(2)}, z_{(2)})\\
 &&\hspace{8mm}+\om(y, x_{(2)(1)})\om(x_{(2)(2)}, z))\\
 &&\stackrel{(\mref{eq:ccybe})}{=}0,
 \end{eqnarray*}
 finishing the proof.
 \end{proof}

A direct computation gives the following examples.

 \begin{ex} \label{ex:de:ccybe} Let $ A=K\{e, f\}$ be a two dimensional vector space and $\l, \phi, \nu, \kappa$ be parameters.
 \begin{enumerate}[(1)]
   \item \label{it:1} Define a comultiplication on $ A$ by
   $$\left\{
            \begin{array}{c}
             \D(e)=\l e\o e,\\
             \D(f)=\phi f\o f. \\
            \end{array}
            \right.$$
   Then $(A, \D)$ is a pre-Lie coalgebra and

   \begin{center}
     %\item [(1a)] \label{it:ex:de:ccybe1}
     (1a)\quad
          \begin{tabular}{r|rr}
          $\om$ & $e$  & $f$  \\
          \hline
           $e$ & $\nu$  & $0$  \\
           $f$ & $0$  &  $\kappa$
        \end{tabular}, \qquad
     %\item [(1b)] \label{it:ex:de:ccybe2}
     (1b)\quad
          \begin{tabular}{r|rr}
          $\om$ & $e$  & $f$  \\
          \hline
           $e$ & $\frac{\phi^2}{\l^2}\nu$  & $\frac{\phi}{\l}\nu$  \\
           $f$ & $\frac{\phi}{\l}\nu$  &  $\nu$ \\
        \end{tabular}.
   \end{center}
   are solutions of the co-\sss-equation in $(A, \D)$.

   \item \label{it:2} Define a comultiplication on $ A$ by
   $$\left\{
            \begin{array}{c}
             \D(e)=\l (e\o e+e\o f),\\
             \D(f)=\l (f\o e+f\o f).
            \end{array}
            \right.$$
   Then $(A, \D)$ is a pre-Lie coalgebra and

   \begin{center}
     %\item [(2a)] \label{it:ex:de:ccybe1}
     (2a)\quad
          \begin{tabular}{r|rr}
          $\om$ & $e$  & $f$  \\
          \hline
           $e$ & $0$  & $0$  \\
           $f$ & $0$  &  $\nu$
        \end{tabular},
     %\item [(2b)] \label{it:ex:de:ccybe2}
     \qquad (2b)\quad
          \begin{tabular}{r|rr}
          $\om$ & $e$  & $f$  \\
          \hline
           $e$ & $\nu$  & $\kappa$  \\
           $f$ & $\kappa$  &  $\frac{\kappa^2}{\nu}$
        \end{tabular}.
   \end{center}
   are solutions of the co-\sss-equation in $(A, \D)$.
 \end{enumerate}
 The pre-Lie coalgebras $(A, \D)$ together with the symmetric solutions $\om$ given above are all dual quasitriangular pre-Lie bialgebras.
 \end{ex}

%By a direct check, we obtain
 \begin{pro}\label{pro:cqt} Let $(A, \D)$ be a pre-Lie coalgebra, $\om\in (A\o A)^*$ and $\ci_{\om}$ be defined in Eq.~\meqref{eq:p}. Then Eq.\,(\mref{eq:ccybe}) holds if and only if
 \begin{eqnarray}\label{eq:cqt2}
 \om(x, y\ci_\om z)=\om(x_{(1)}, y)\om(x_{(2)}, z), \quad x, y, z\in A.
 \end{eqnarray}
 If further, $\om$ is a symmetric bilinear form, then the quadruple $(A, \D, \om, \ci_{\om})$ is a dual quasitriangular pre-Lie bialgebra if and only if
 \begin{eqnarray}\label{eq:cqt2-1}
 \om(x\ci_\om y, z)=\om(x, z_{(1)})\om(y, z_{(2)}), \quad x, y, z\in A.
 \end{eqnarray}
 \end{pro}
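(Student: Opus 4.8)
The plan is to reduce both equivalences to a single expansion of the product $\ci_\om$ together with the symmetry of $\om$, using Theorem \mref{thm:cqt} to interpret what it means for $(A,\D,\om,\ci_\om)$ to be a dual quasitriangular pre-Lie bialgebra.

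First I would establish the equivalence (\mref{eq:ccybe}) $\Leftrightarrow$ (\mref{eq:cqt2}), which requires no symmetry assumption at all. Applying $\om(x,-)$ to the defining formula (\mref{eq:p}) for $y\ci_\om z$ (i.e. to (\mref{eq:p}) with $x,y$ replaced by $y,z$) gives
\[
\om(x, y\ci_\om z) = \om(x, y_{(1)})\om(y_{(2)}, z) + \om(x, z_{(1)})\om(y, z_{(2)}) - \om(y, z_{(1)})\om(x, z_{(2)}).
\]
Hence the asserted identity $\om(x, y\ci_\om z) = \om(x_{(1)}, y)\om(x_{(2)}, z)$ of (\mref{eq:cqt2}) is obtained from this display by transposing the last term to the right-hand side, and what results is verbatim the co-\sss-equation (\mref{eq:ccybe}). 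Thus (\mref{eq:ccybe}) and (\mref{eq:cqt2}) are literally the same relation rewritten, so the first equivalence costs nothing beyond expanding (\mref{eq:p}) and holds for every (not necessarily symmetric) $\om$.

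For the second equivalence I would invoke Theorem \mref{thm:cqt}: by construction, the quadruple $(A,\D,\om,\ci_\om)$ is a dual quasitriangular pre-Lie bialgebra exactly when $\om$ is a symmetric solution of the co-\sss-equation. Since $\om$ is assumed symmetric here, this property is equivalent to (\mref{eq:ccybe}), hence by the first part to (\mref{eq:cqt2}). It therefore suffices to show (\mref{eq:cqt2}) $\Leftrightarrow$ (\mref{eq:cqt2-1}) under the symmetry $\om(a,b)=\om(b,a)$. This I would do by relabeling: using symmetry, $\om(x\ci_\om y, z)=\om(z, x\ci_\om y)$, and applying (\mref{eq:cqt2}) with $x,y,z$ replaced by $z,x,y$ yields $\om(z, x\ci_\om y)=\om(z_{(1)}, x)\om(z_{(2)}, y)=\om(x, z_{(1)})\om(y, z_{(2)})$, which is (\mref{eq:cqt2-1}); the converse follows symmetrically, starting from $\om(x, y\ci_\om z)=\om(y\ci_\om z, x)$ and applying (\mref{eq:cqt2-1}) with $x,y,z$ replaced by $y,z,x$.

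I do not expect a genuine obstacle: the entire content is the observation that, through (\mref{eq:p}), the operator identity (\mref{eq:cqt2}) is nothing but (\mref{eq:ccybe}) rewritten. The only points requiring care are the Sweedler-index bookkeeping in the expansion of $\om(x, y\ci_\om z)$ and the correct cyclic relabelings of the variables when passing between (\mref{eq:cqt2}) and (\mref{eq:cqt2-1}), where the symmetry of $\om$ must be applied to each tensor factor separately.
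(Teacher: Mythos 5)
Your proposal is correct and takes essentially the same approach as the paper: the paper's proof is a terse appeal to a direct computation (dualizing Proposition \mref{pro:qt-1}, whose own proof is ``a direct computation''), and your expansion of $\om(x,y\ci_\om z)$ via Eq.~\meqref{eq:p} together with the symmetry relabeling between \meqref{eq:cqt2} and \meqref{eq:cqt2-1} is exactly that computation carried out explicitly.
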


 \begin{proof} The proof is obtained by taking the dual to Proposition \ref{pro:qt-1}, following the proof of Theorem \ref{thm:cqt}.
 \end{proof}

\subsection{Pseudo-Hessian pre-Lie algebras}\label{se:sym}
 We recall from \cite{NB} the notion of a pseudo-Hessian pre-Lie algebra introduced by Bai and Ni.

 \begin{defi}\label{de:sym} Let $(A, \ci)$ be a pre-Lie algebra and $\om\in (A\o  A)^*$ a symmetric bilinear form. Assume that for all $x, y, z\in  A$,
 \begin{eqnarray}\label{eq:symp}
 \om(x\ci y, z)-\om(x, y\ci z)=\om(y\ci x, z)-\om(y, x\ci z).
 \end{eqnarray}
 Then we call $(A, \ci)$ a {\bf pseudo-Hessian pre-Lie algebra} and denote it by $(A, \ci, \om)$.
 \end{defi}

 \begin{rmk} The bilinear form in Definition \ref{de:sym} is precisely the definition of a 2-cocycle of a pre-Lie algebra into the trivial bimodule $K$ (see \cite{SW}). It is equivalent to the following central extension: there exists a pre-Lie algebra structure on $A\oplus Kc$ given by
 $$x\star y=x\ci y+\om(x,y)c,~x\star c=c\star x=c\star c=0, \quad x, y\in A,$$
 if and only if $\om$ is a 2-cocycle of $A$.
 \end{rmk}

 A dual quasitriangular pre-Lie bialgebra can induce a pseudo-Hessian  pre-Lie algebra.

 \begin{pro}\label{pro:de:sym} Let $(A, \D)$ be a pre-Lie coalgebra and $\om\in (A\o A)^*$ a symmetric bilinear form. If $(A, \D, \om, \ci_{\om})$ is a dual quasitriangular pre-Lie bialgebra, where $\ci_{\om}$ is defined by Eq.~\meqref{eq:p}, then $(A, \ci_\om, \om)$ is a pseudo-Hessian  pre-Lie algebra.
 \end{pro}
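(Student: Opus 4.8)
The plan is to notice that most of the work is already done: by Theorem \ref{thm:cqt} the product $\ci_{\om}$ from \eqref{eq:p} makes $(A,\ci_{\om})$ a pre-Lie algebra, so the \emph{only} thing left to establish is the pseudo-Hessian (equivalently, $2$-cocycle) identity \eqref{eq:symp} for the symmetric form $\om$. First I would invoke Proposition \ref{pro:cqt}: since $(A,\D,\om,\ci_{\om})$ is a dual quasitriangular pre-Lie bialgebra and $\om$ is symmetric, both operator characterizations \eqref{eq:cqt2} and \eqref{eq:cqt2-1} hold simultaneously, namely $\om(x, y\ci_{\om} z)=\om(x_{(1)}, y)\om(x_{(2)}, z)$ and $\om(x\ci_{\om} y, z)=\om(x, z_{(1)})\om(y, z_{(2)})$. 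These two formulas convert every occurrence of $\ci_{\om}$ appearing in \eqref{eq:symp} into a contraction of $\om$ with $\D$, which is exactly what lets the comultiplication enter the computation.

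Next I would substitute directly into the two sides of \eqref{eq:symp}. Applying \eqref{eq:cqt2-1} to the first term and \eqref{eq:cqt2} to the second term on each side gives
\begin{align*}
\om(x\ci_{\om} y, z)-\om(x, y\ci_{\om} z) &= \om(x, z_{(1)})\om(y, z_{(2)})-\om(x_{(1)}, y)\om(x_{(2)}, z),\\
\om(y\ci_{\om} x, z)-\om(y, x\ci_{\om} z) &= \om(y, z_{(1)})\om(x, z_{(2)})-\om(y_{(1)}, x)\om(y_{(2)}, z).
\end{align*}
Using the symmetry of $\om$ in the form $\om(x_{(1)}, y)=\om(y, x_{(1)})$ and $\om(y_{(1)}, x)=\om(x, y_{(1)})$, the assertion that these two right-hand sides agree rearranges precisely into
\[
\om(x, y_{(1)})\om(y_{(2)}, z)+\om(x, z_{(1)})\om(y, z_{(2)})=\om(y, x_{(1)})\om(x_{(2)}, z)+\om(y, z_{(1)})\om(x, z_{(2)}),
\]
which is nothing but the co-\sss-equation \eqref{eq:ccybe}. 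Since $(A,\D,\om,\ci_{\om})$ being dual quasitriangular means by construction (Theorem \ref{thm:cqt}) that $\om$ is a symmetric solution of \eqref{eq:ccybe}, this identity holds, and \eqref{eq:symp} follows.

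I do not expect a genuine obstacle here: the crux is simply recognizing that, once the pre-Lie property is free from Theorem \ref{thm:cqt}, the pseudo-Hessian condition is \emph{equivalent} to the co-\sss-equation modulo the symmetry of $\om$. The only care needed is the bookkeeping of the Sweedler indices and making sure the symmetry of $\om$ is applied to the correct pair of factors so that the left-hand and right-hand contractions line up with \eqref{eq:ccybe}; beyond that the argument is a short, purely formal reduction.
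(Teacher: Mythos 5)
Your proof is correct, but it follows a slightly different route from the paper's own argument, and the difference is worth noting. The paper expands $\om(x\ci_\om y,z)$ directly from the definition \eqref{eq:p} (using trilinearity of the expression in $\om$), converts only the term $\om(x,y\ci_\om z)$ via \eqref{eq:cqt2}, and then observes that the resulting four-term expression is symmetric in $x$ and $y$ \emph{term by term} once the symmetry of $\om$ is applied; in particular \eqref{eq:cqt2-1} is never used and the co-\sss-equation \eqref{eq:ccybe} is never re-invoked at the end. You instead use both halves of Proposition~\ref{pro:cqt}: \eqref{eq:cqt2-1} compresses $\om(x\ci_\om y,z)$ into the single term $\om(x,z_{(1)})\om(y,z_{(2)})$ and \eqref{eq:cqt2} handles $\om(x,y\ci_\om z)$, so each side of \eqref{eq:symp} becomes a two-term expression, and the desired equality rearranges, after the symmetry of $\om$, into exactly the co-\sss-equation \eqref{eq:ccybe}, which holds because dual quasitriangularity in the sense of Theorem~\ref{thm:cqt} means precisely that $\om$ is a symmetric solution of \eqref{eq:ccybe}. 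Both arguments are sound and of comparable length; yours makes explicit the conceptual point that, modulo \eqref{eq:cqt2}, \eqref{eq:cqt2-1} and symmetry, the pseudo-Hessian $2$-cocycle identity \emph{is} the co-\sss-equation, while the paper's version is marginally more economical in its inputs, needing only \eqref{eq:p}, \eqref{eq:cqt2} and the symmetry of $\om$.
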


 \begin{proof} For all $x, y, z\in  A$, we have
 \begin{eqnarray*}
 &&\hspace{-6mm}\om(x\ci_\om y, z)-\om(x, y\ci_\om z) \\
 &&\stackrel{}{=}\om(x_{(1)}, z)\om(x_{(2)}, y)+\om(y_{(1)}, z)\om(x, y_{(2)})-\om(x, y_{(1)})\om(y_{(2)}, z)-\om(x_{(1)}, y)\om(x_{(2)}, z)\\
 &&\hspace{10mm}(\hbox{~by Eqs.(\mref{eq:p}) and (\mref{eq:cqt2})})\\
 &&\stackrel{}{=}\om(y_{(1)}, z)\om(y_{(2)}, x)+\om(x_{(1)}, z)\om(y, x_{(2)})-\om(y, x_{(1)})\om(x_{(2)}, z)-\om(y_{(1)}, x)\om(y_{(2)}, z)\\
 &&\hspace{10mm} \hbox{~(by the symmetry of } \om)\\
 &&\stackrel{}{=}\om(y\ci_\om x, z)-\om(y, x\ci_\om z), \hspace{3mm}(\hbox{~by Eqs.(\mref{eq:p}) and (\mref{eq:cqt2})}),
 \end{eqnarray*}
 finishing the proof.
 \end{proof}

 By direction computations, we have the following examples.

 \begin{ex} \label{ex:sym} Let $ A=K\{e, f\}$ be a 2-dimensional vector space and $\ell_i, i=1, 2, 3, \l$ be parameters. By \cite{BM} or \cite{Bur}, $(A, \ci)$ is a Leibniz algebra, where $\ci$ is given by
 \begin{center}
 \begin{tabular}{r|cc}
          $\ci$ & $e$  & $f$  \\
          \hline
           $e$ & $0$  & $0$  \\
           $f$ & $-e$  &  $\l f$ \\
        \end{tabular} \quad$(\l\neq -1)$.
 \end{center}
 Define $\om$ by
 \begin{center}
          (a)\quad \begin{tabular}{r|cc}
            $\om$ & $e$  & $f$  \\
            \hline
            $e$ & $0$  & $\ell_3$  \\
            $f$ & $\ell_3$  &  $\ell_2$ \\
          \end{tabular}\quad $(\l=1)$
         \qquad \quad~or \qquad (b)\quad \begin{tabular}{r|cc}
            $\om$ & $e$  & $f$  \\
            \hline
            $e$ & $0$  & $0$  \\
            $f$ & $0$  &  $\ell_2$ \\
          \end{tabular}
          \end{center}
Then the triple $(A, \ci, \om)$ is a pseudo-Hessian pre-Lie algebra.
 \end{ex}

\section{Nijenhuis pre-Lie algebras from pre-Lie bialgebras}\label{se:suff} In this section, we provide a construction of Nijenhuis operators on pseudo-Hessian pre-Lie algebra. We recall from \cite{WSBL} the definition of Nijenhuis pre-Lie algebra.

 \begin{defi}\label{de:npre} Let $(A, \ci)$ be a pre-Lie algebra and $N: A\lr  A$ a linear map. If
 \begin{eqnarray}
 &N(x)\ci N(y)+N^2(x\ci y)=N(N(x)\ci y)+N(x\ci N(y)),\quad x, y\in A,&\label{eq:n}
 \end{eqnarray}
 then we call $N$ a {\bf Nijenhuis operator on $(A, \ci)$} and the pair $((A, \ci), N)$ a {\bf Nijenhuis pre-Lie algebra}.
 \end{defi}

 \begin{ex}\label{ex:de:npre} Let $(A, \ci)$ be a pre-Lie algebra.
 \begin{enumerate}[(1)]
   \item $N=\id_A$ is a \N operator on $(A, \ci)$;
   \item Define the left and right multiplication maps $L, R:  A\lr \End(A)$ by $L_x y=x\ci y$ and $R_x y=y\ci x$. If a linear map $N: A\lr A$ commutes with the left (or right) multiplication, i.e., for all $x\in A$, $N L_x=L_x N$ (or $N R_x=R_x N$), then $N$ is a \N operator.
 \end{enumerate}
 \end{ex}

 Now we present the main result in this section.

 \begin{thm}\label{thm:ln} Let $(A, \ci, \om)$ be a pseudo-Hessian pre-Lie algebra and $r=\sum\limits^n_{i=1} a_i\o b_i\in A\o  A$. Define a linear map
 \begin{equation}
 N: A\lr  A, \quad N(x)=\sum\limits_{i=1}\limits^n \om(x, a_i)b_i, \quad x\in A.\label{eq:ys}
 \end{equation}
 If $(A, \ci, r, \D_r)$ is a quasitriangular pre-Lie bialgebra with the comultiplication $\D_r$ defined in Eq.~\meqref{eq:cop}, and $(A, \D_r, \om, \ci_{\om})$ is a dual quasitriangular pre-Lie bialgebra with the multiplication $\ci_{\om}$ defined in Eq.~\meqref{eq:p}, then $((A, \ci), N)$ is a \N pre-Lie algebra.
 \end{thm}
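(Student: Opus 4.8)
The plan is to use the product $\ci_{\om}$ supplied by the dual quasitriangular structure as a bridge between the two hypotheses, and to deduce the Nijenhuis identity \eqref{eq:n} from two intermediate facts: that $\ci_{\om}$ is exactly the $N$-deformation of $\ci$, and that $N$ is multiplicative from $(A,\ci_{\om})$ to $(A,\ci)$.

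\textbf{Step 1 (the deformed product).} First I would establish the operator identity
\[
x\ci_{\om} y = N(x)\ci y + x\ci N(y) - N(x\ci y), \qquad x,y\in A.
\]
This comes from expanding the definition \eqref{eq:p} of $\ci_{\om}$ using the comultiplication $\D_r$ of \eqref{eq:cop}, which produces nine Sweedler terms. Two of them are immediately $N(x)\ci y$ and $x\ci N(y)$, two more are $\pm\,y\ci N(x)$ and cancel, all by the symmetry of $r$ together with the defining formula \eqref{eq:ys} for $N$. The remaining five terms are reorganized by applying the pseudo-Hessian relation \eqref{eq:symp} with its middle argument running over the legs $b_i$ of $r$; after this, the symmetry of $r$ identifies two of them and the rest assemble into $-N(x\ci y)$. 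It is worth noting that this step uses neither the quasitriangular nor the dual quasitriangular hypothesis, only that $(A,\ci,\om)$ is pseudo-Hessian and that $r$ is symmetric.

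\textbf{Step 2 (multiplicativity of $N$).} Next I would prove $N(x\ci_{\om} y) = N(x)\ci N(y)$, and this is where the two bialgebra hypotheses enter, each supplying one side. On one hand, expanding the right-hand side of \eqref{eq:cqt2-1} (valid since $(A,\D_r,\om,\ci_{\om})$ is dual quasitriangular) through $\D_r$ and \eqref{eq:ys}, then setting $z=a_k$ and summing $\sum_k(-)\,b_k$, yields
\[
N(x\ci_{\om}y) = \sum_k\big(\om(x,a_k\ci N(y)) + \om(y,a_k\ci N(x)) - \om(y,N(x)\ci a_k)\big)b_k.
\]
On the other hand, applying $\om(x,-)\o\om(y,-)\o\id$ to \eqref{eq:qt2-1} (valid since $(A,\ci,r,\D_r)$ is quasitriangular) and simplifying each factor through \eqref{eq:ys} gives literally the same sum for $N(x)\ci N(y)$. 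Comparing the two expressions establishes the claim.

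\textbf{Step 3 (conclusion).} Applying $N$ to the identity of Step 1 and using linearity gives $N(x\ci_{\om}y) = N(N(x)\ci y) + N(x\ci N(y)) - N^2(x\ci y)$; substituting $N(x\ci_{\om}y)=N(x)\ci N(y)$ from Step 2 and rearranging produces exactly \eqref{eq:n}. The main obstacle is Step 1: the careful bookkeeping of the nine terms and, in particular, recognizing that the leftover five terms reassemble into $-N(x\ci y)$ precisely when \eqref{eq:symp} is applied with its middle slot filled by the $b_i$. Steps 2 and 3 are then essentially formal, the clean point being that the dual quasitriangular and quasitriangular hypotheses each output one of the two sides of the multiplicativity of $N$ as the very same expression.
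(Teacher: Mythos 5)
Your proposal is correct, but it takes a genuinely different route from the paper's proof, which stays entirely at the level of scalar identities in $\om$: the paper first derives from the co-\sss-equation \eqref{eq:ccybe} (expanded through \eqref{eq:cop} and simplified by \eqref{eq:symp} and the symmetry of $r$) the auxiliary identity \eqref{eq:cqt-sym}, and then verifies $N(x)\ci N(y)-N\big(N(x)\ci y+x\ci N(y)-N(x\ci y)\big)=0$ by one long direct expansion via \eqref{eq:ys}, feeding in \eqref{eq:cqt-sym}, then \eqref{eq:symp}, then the \sss-equation \eqref{eq:cybe}. You instead factor the argument through two structural facts: (i) the product $\ci_\om$ built from $\D_r$ by \eqref{eq:p} coincides with the $N$-deformed product, $x\ci_\om y=N(x)\ci y+x\ci N(y)-N(x\ci y)$, which — as you correctly note — needs only the pseudo-Hessian identity \eqref{eq:symp}, the symmetry of $\om$ and $r$, and the formulas \eqref{eq:cop}, \eqref{eq:ys}, not the \sss- or co-\sss-equations; and (ii) $N$ is multiplicative as a map $(A,\ci_\om)\to(A,\ci)$, where the operator forms \eqref{eq:cqt2-1} and \eqref{eq:qt2-1} of the two hypotheses (Propositions \ref{pro:cqt} and \ref{pro:qt-1}) produce the two sides of $N(x\ci_\om y)=N(x)\ci N(y)$ as literally the same double sum $\sum_{i,k}\big(\om(x, a_k\ci a_i)\om(y, b_i)+\om(x, a_i)\om(y, a_k\ci b_i)-\om(x,a_i)\om(y, b_i\ci a_k)\big)b_k$. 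I checked the bookkeeping in both steps (the nine-term expansion and cancellations in Step 1, the identification of the two triple sums in Step 2, after relabeling indices and using the symmetry of $r$) and it is accurate; Step 3 is then purely formal. What your route buys: it isolates exactly where each hypothesis enters (each bialgebra hypothesis supplies one side of the multiplicativity), and it exhibits the Nijenhuis condition \eqref{eq:n} in its characteristic form — $N$ intertwines the deformed product with the original one — which is conceptually transparent and reusable. What the paper's route buys: it never needs $\ci_\om$ as an algebraic object in its own right, working only with the raw equations \eqref{eq:ccybe} and \eqref{eq:cybe}, at the cost of a single longer computation whose intermediate identity \eqref{eq:cqt-sym} has less visible meaning.
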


 \begin{proof} We first prove a preliminary equality. For all $x, y, z\in  A$, by Eq.\,(\mref{eq:ccybe}), one has
 \begin{eqnarray*}
 \om(x, y_{(1)})\om(y_{(2)}, z)+\om(x, z_{(1)})\om(y, z_{(2)})-\om(x_{(1)}, y)\om(x_{(2)}, z)-\om(y, z_{(1)})\om(x, z_{(2)})=0.
 \end{eqnarray*}
 Since $(A, \D_r, \om, \ci_{\om})$ is a dual quasitriangular pre-Lie bialgebra, by Eq.\,(\mref{eq:cop}), we have
 \begin{eqnarray*}
 &&\sum\limits_{i=1}\limits^n \Big(\om(x, y a_i)\om(b_i, z)+\om(x, a_i)\om(y b_i, z)-\om(x, a_i)\om(b_i y, z)\\
 &&+\om(x, z a_i)\om(y, b_i)+\om(x, a_i)\om(y, z b_i)-\om(x, a_i)\om(y, b_i z)\\
 &&-\om(x a_i, y)\om(b_i, z)-\om(a_i, y)\om(x b_i, z)+\om(a_i, y)\om(b_i x, z)\\
 &&-\om(y, z a_i)\om(x, b_i)-\om(y, a_i)\om(x, z b_i)+\om(y, a_i)\om(x, b_i z)\Big)=0.
 \end{eqnarray*}
 Then by Eq.\,(\mref{eq:symp}) and the symmetry of $r$, we obtain
 \begin{eqnarray*}
 &&\sum\limits_{i=1}\limits^n \Big(\om(a_i, z)\om(x, y b_i)-\om(x, a_i)\om(b_i, y z)+\cancel{\om(y, a_i)\om(x, z b_i)}+\bcancel{\om(x, a_i)\om(y, z b_i)}\\
 &&-\om(a_i, z)\om(x b_i, y)+\om(a_i, y)\om(b_i, x z)\bcancel{-\om(x, a_i)\om(y, z b_i)}\cancel{-\om(y, a_i)\om(x, z b_i)}\Big)=0.
 \end{eqnarray*}
 Thus by Eq.\,(\mref{eq:symp}), we get the useful equation
 \begin{eqnarray}
 &\sum\limits_{i=1}\limits^n \Big(\om(a_i, z)\om(x y, b_i)-\om(x, a_i)\om(b_i, y z)-\om(a_i, z)\om(y x, b_i)+\om(a_i, y)\om(b_i, x z)\Big)=0.&\label{eq:cqt-sym}
 \end{eqnarray}

We next verify that the linear map $N$ defined in Eq.~\meqref{eq:ys} is a Nijenhuis operator on $(A, \ci)$.
 \begin{eqnarray*}
 &&\hspace{-15mm}N(x)\ci N(y)-N(N(x)\ci y+x\ci N(y)-N(x\ci y))\\
 &\stackrel{ }{=}&\sum\limits_{i,j=1}\limits^n \Big(\om(x, a_i)\om(y, a_j)b_i\ci b_j
 -\om(x, a_i)\om(b_i\ci y, a_j)b_j-\om(y, a_i)\om(x\ci b_i, a_j)b_j\\
 &&+\om(x\ci y, a_i)\om(b_i, a_j)b_j\Big)\hspace{6mm}(\hbox{by Eq.\,(\mref{eq:ys})})\\
 &\stackrel{}{=}&\sum\limits_{i,j=1}\limits^n \Big(\om(x, a_i)\om(y, a_j)b_i\ci b_j
 -\om(x, a_i)\om(b_i\ci y, a_j)b_j-\om(y, a_i)\om(x\ci b_i, a_j)b_j\\
 &&+\om(x, a_i)\om(a_j\ci y, b_i)b_j+\om(y, a_i)\om(a_j\ci x, b_i)b_j-\om(y, a_i)\om(x\ci a_j, b_i)b_j\Big) \hspace{3mm}(\hbox{~by Eq.\,(\mref{eq:cqt-sym})})\\
 &\stackrel{}{=}&\sum\limits_{i,j=1}\limits^n \Big(\om(x, a_i)\om(y, a_j)b_i\ci b_j+\om(x, a_i)\om(y, b_i\ci a_j)b_j-\om(x, a_i)\om(y, a_j\ci b_i)b_j\\
 &&-\om(y, a_i)\om(x, a_j\ci b_i)b_j\Big) \hspace{6mm}(\hbox{by Eq.\,(\mref{eq:symp}) and the symmetry of } \om)\\
 &\stackrel{}{=}&0.\hspace{6mm}(\hbox{by Eq.\,(\mref{eq:cybe}) and the symmetry of } r)
 \end{eqnarray*}
 Therefore, $(A, \ci, N)$ is a \N pre-Lie algebra.
 \end{proof}

 Let $V$ be a vector space and $r\in V\o V$. Define $\rr: V^*\lr V$ by
 \begin{eqnarray}\label{eq:rr}
 \rr(v^*):=\sum\limits_{i=1}\limits^n \langle v^*, a_i \rangle b_i, ~~\forall~v^*\in V^*.
 \end{eqnarray}
 The element $r$ is called {\bf nondegenerate} if $\rr$ is bijective.
 \smallskip

 We next give some examples, beginning with the case when $r$ is nondegenerate.

 \begin{ex} Let $(A, \ci, r, \D_r)$ be a quasitriangular pre-Lie bialgebra with the comultiplication $\D_r$ defined in Eq.~\meqref{eq:cop}. By \cite[Theorem 6.3]{Bai2}, when $r$ is nondegenerate, then $r$ is a symmetric solution of the \sss-equation in $(A, \ci)$ if and only if $\om_r$ is a nondegenerate pseudo-Hessian structure on $(A, \ci)$, where $\om_r$ is defined by
 \begin{eqnarray}\label{eq:inr}
 \om_r(x, y):=\langle {r^\#}^{-1}(x), y \rangle.
 \end{eqnarray}
 We note that $\om_r$ is symmetric if and only if $r$ is symmetric. Further, $(A, \D_r, \om_r, \ci_{\om_r})$ is a dual quasitriangular pre-Lie bialgebra with the multiplication $\ci_{\om_r}$ defined in Eq.~\meqref{eq:p}.
 In this case, the Nijenhuis operator $N$ given by Eq.\,(\ref{eq:ys}) on $(A, \ci)$ is
 \begin{eqnarray*}
 &N(x)=\sum\limits_{i=1}\limits^n \om_r(x, a_i)b_i\stackrel{(\mref{eq:inr})}{=}\sum\limits_{i=1}\limits^n \langle {r^\#}^{-1}(x), a_i\rangle b_i\stackrel{(\mref{eq:rr})}{=}r^\#({r^\#}^{-1}(x))=x,~~\forall~x\in  A.&
 \end{eqnarray*}
 That is, $N=\id_A$.
 \label{ex:trivial}
\end{ex}

 \begin{ex} \label{ex:thm:ln} Let $(A, \ci, \om)$ be a pseudo-Hessian pre-Lie algebra given in Example \mref{ex:sym}. All the Nijenhuis operators on $(A, \ci)$ by using the method of Theorem \mref{thm:ln} are given as follows. Here $k_i, \ell_i, i=1, 2, 3$ are parameters.
 \begin{enumerate}[(a)]
   \item For case (a) in Example~\mref{ex:sym}:
   $$N=0,\quad \hbox{or} \quad  \left\{
            \begin{array}{l}
             N(e)=k_2 \ell_3 e\\
             N(f)=(k_1 \ell_3+k_2 \ell_2)e+k_2 \ell_3 f \\
            \end{array}
            \right.~(k_2\neq 0, \l=1), \quad \hbox{or}$$
       $$\left\{
            \begin{array}{l}
             N(e)=0\\
             N(f)=k_3 \ell_2 f \\
            \end{array}
            \right. ~(k_3\neq 0), \quad \hbox{or} \quad \left\{
                  \begin{array}{l}
                    N(e)=0\\
                    N(f)=k_1 \ell_3 e \\
                  \end{array}
                  \right. ~(k_1\neq 0);$$
   \item For case (b) in Example~\mref{ex:sym}:
   $$N=0,\quad \hbox{or} \quad \left\{
            \begin{array}{l}
             N(e)=0\\
             N(f)=k_3 \ell_2 f \\
            \end{array}
            \right. ~(k_3\neq 0), \quad \hbox{or}\hspace{3mm} \left\{
            \begin{array}{l}
             N(e)=0\\
             N(f)=k_2 \ell_2 e \\
            \end{array}
            \right.~(k_2\neq 0, \l=1). $$
 \end{enumerate}
 \end{ex}

\section{Nijenhuis operators on pre-Lie coalgebras}\label{se:cln} In order to obtain the bialgebraic structures on a Nijenhuis pre-Lie algebra, we need to consider the dual notion of Nijenhuis pre-Lie algebra.

\subsection{Nijenhuis pre-Lie coalgebras} \label{se:cdrep}
Dual to Nijenhuis pre-Lie algebra, we have
 \begin{defi}\label{de:nc}  A {\bf \N pre-Lie coalgebra} is a pair $((A, \D), S)$, where $(A, \D)$ is a pre-Lie coalgebra and $S: A\lr  A$ a linear map such that,
 \begin{eqnarray}
 &S(x_{(1)})\o S(x_{(2)})+S^2(x)_{(1)}\o S^2(x)_{(2)}=S(S(x)_{(1)})\o S(x)_{(2)}+S(x)_{(1)}\o S(S(x)_{(2)}), \quad x\in A.&\label{eq:nc}
 \end{eqnarray}
 \end{defi}

\N pre-Lie coalgebra can be derived by linearly compatible pre-Lie coalgebra structures, as the coalgebra version of linearly compatible algebra structures \mcite{ZGG1,ZGG2}.

\begin{defi}\label{de:comprelieco} Let $(A, \D)$ and $(A, \d)$ be pre-Lie coalgebras. For fixed $s, t\in K$, define the coproduct $\D_{s,t}$ on $A$ by
 \begin{eqnarray}
 &\D_{s,t}(x):=s x_{(1)}\o x_{(2)}+t x_{[1]}\o x_{[2]},~\forall~s, t\in K, \quad x\in A, &\label{eq:comprelieco}
 \end{eqnarray}
 where $\D(x)=x_{(1)}\o x_{(2)}$ and $\d(x)=x_{[1]}\o x_{[2]}$. If $(A, \D_{s, t})$ is a pre-Lie coalgebra for all $s, t\in K$, then we call {\bf $(A, \D)$ and $(A, \d)$ linearly compatible}.
 \end{defi}

 \begin{lem} \label{lem:comcoalg} Pre-Lie coalgebras $(A, \D)$ and $(A, \d)$ are linearly compatible if and only if, for all $x\in  A$, the equation below holds:
 \begin{eqnarray*}
 &&\hspace{-6mm}x_{(1)[1]}\o x_{(1)[2]}\o x_{(2)}+x_{[1](1)}\o x_{[1](2)}\o x_{[2]}
 -x_{(1)}\o x_{(2)[1]}\o x_{(2)[2]}-x_{[1]}\o x_{[2](1)}\o x_{[2](2)}\nonumber\\
 &&=x_{(1)[2]}\o x_{(1)[1]}\o x_{(2)}+x_{[1](2)}\o x_{[1](1)}\o x_{[2]}-x_{(2)[1]}\o x_{(1)}\o x_{(2)[2]}-x_{[2](1)}\o x_{[1]}\o x_{[2](2)}.\label{eq:ca2}
 \end{eqnarray*}
 \end{lem}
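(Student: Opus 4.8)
The statement to prove is Lemma \ref{lem:comcoalg}: that two pre-Lie coalgebras $(A,\D)$ and $(A,\d)$ are linearly compatible if and only if the stated ternary identity in $A^{\o 3}$ holds. The plan is to substitute the definition \eqref{eq:comprelieco} of $\D_{s,t}$ directly into the pre-Lie coalgebra axiom \eqref{eq:cl} written for $\D_{s,t}$, and then expand and collect terms according to the monomials in the scalars $s$ and $t$. The coalgebra axiom \eqref{eq:cl} is quadratic in the coproduct (it involves two applications of the comultiplication via terms like $x_{(1)(1)}\o x_{(1)(2)}\o x_{(2)}$), so substituting $\D_{s,t}=s\D+t\d$ produces a polynomial of degree two in $(s,t)$, i.e.\ a sum of an $s^2$-term, a $t^2$-term, and an $st$-term.

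\textbf{Main steps.}
First I would write the coassociativity-type axiom \eqref{eq:cl} with $\D$ replaced throughout by $\D_{s,t}$, being careful that each of the two nested coproducts independently becomes $s\D+t\d$. Expanding, the coefficient of $s^2$ is exactly the axiom \eqref{eq:cl} for $\D$ alone, which vanishes because $(A,\D)$ is a pre-Lie coalgebra; likewise the coefficient of $t^2$ vanishes because $(A,\d)$ is a pre-Lie coalgebra. Hence $\D_{s,t}$ satisfies \eqref{eq:cl} for \emph{all} $s,t\in K$ if and only if the cross-term coefficient (the coefficient of $st$) vanishes identically. The second step is to read off precisely this $st$-coefficient: applying the outer $\D$-or-$\d$ split to each nested slot produces the mixed terms in which one factor of the coproduct is $\D$ and the other is $\d$, and these assemble exactly into the displayed equation of the lemma, with the notation $x_{(1)[1]}\o x_{(1)[2]}\o x_{(2)}$ meaning $\d$ applied to the first tensor leg of $\D(x)$, and $x_{[1](1)}\o x_{[1](2)}\o x_{[2]}$ meaning $\D$ applied to the first leg of $\d(x)$, and so on. The final step is to observe that vanishing of a degree-two polynomial in $s,t$ for all scalar values (over an infinite field $K$, or by comparing coefficients directly) is equivalent to the vanishing of each coefficient, giving the claimed if-and-only-if.

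\textbf{Anticipated difficulty.}
The computation itself is routine bilinear bookkeeping; the only real care is organizational. The main obstacle is \emph{bookkeeping the Sweedler indices correctly} through the expansion: each of the four compound terms in \eqref{eq:cl} contains a nested coproduct, and one must track which slot gets the $\D$-expansion $(\cdot)_{(1)}\o(\cdot)_{(2)}$ and which gets the $\d$-expansion $(\cdot)_{[1]}\o(\cdot)_{[2]}$, so that the mixed $st$ terms are matched to the eight terms of the target identity without sign or index errors. One should also note that the two ``pure'' terms $x_{(2)}$ and $x_{[2]}$ (undifferentiated third legs) pair correctly: in $\D_{s,t}$ applied twice, a term like $x_{(1)(1)}^{\D_{s,t}}$ expands into $s\cdot(\D x)$-in-first-slot and $t\cdot(\d x)$-in-first-slot, so the $st$-coefficient naturally collects both $x_{(1)[1]}\o x_{(1)[2]}\o x_{(2)}$ and $x_{[1](1)}\o x_{[1](2)}\o x_{[2]}$ together, exactly as written. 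Once the coefficient extraction is done cleanly, the converse (compatibility $\Rightarrow$ identity) and the forward direction (identity $\Rightarrow$ compatibility) follow simultaneously from the single polynomial-in-$(s,t)$ identity, so no separate argument for each direction is needed.
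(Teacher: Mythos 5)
Your proposal is correct and is exactly the paper's approach: the paper's proof is the one-line remark ``It is direct by checking Eq.~\eqref{eq:cl} for $\D_{s,t}$,'' and your expansion into $s^2$-, $st$-, $t^2$-coefficients (with the pure terms vanishing because $(A,\D)$ and $(A,\d)$ are already assumed to be pre-Lie coalgebras) is precisely that check carried out explicitly. The only superfluous worry is the infinite-field caveat: since the $s^2$ and $t^2$ coefficients vanish by hypothesis, the forward direction follows from the single evaluation $s=t=1$, so the argument works over any field.
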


 \begin{proof} It is direct by checking Eq.\,(\ref{eq:cl}) for $\D_{s, t}$.
 \end{proof}

\begin{defi}\label{de:corep} Let $(A, \D)$ be a pre-Lie coalgebra. A {\bf corepresentation of $(A, \D)$} is a triple $(V, \xi, \eta)$, where $V$ is a vector space, and $\xi, \eta: V\lr  A\o V$ (write $\xi(v)=v_{-1}\o v_{0}, \eta(v)=v^{-1}\o v^{0}$) are linear maps, such that the following equalities hold for all $v\in V$.
 \begin{eqnarray}
 &v_{-1(1)}\o v_{-1(2)}\o v_{0}-v_{-1(2)}\o v_{-1(1)}\o v_{0}=v_{-1}\o {v_{0}}^{-1}\o {v_{0}}^{0}-{v_{0}}^{-1}\o v_{-1}\o {v_{0}}^{0},&\label{eq:crep1}\\
 &{v^{-1}}_{(1)}\o {v^{-1}}_{(2)}\o v^{0}=v_{-1}\o {v_{0}}^{-1}\o {v_{0}}^{0}-{v^0}_{-1}\o v^{-1}\o {v^0}_{0}+v^{0-1}\o v^{-1}\o v^{00}.&\label{eq:crep2}
 \end{eqnarray}

 Let $(V, \xi, \eta)$ and $(V', \xi', \eta')$ be corepresentations of $(A, \D)$ and $(A', \D')$, respectively. A {\bf homomorphism from $(V, \xi, \eta)$ to $(V', \xi', \eta')$} is a pair $(\mu, g)$ of linear maps $\mu:  A\lr  A'$ and $g: V\lr V'$, such that for all $x\in  A$ and $v\in V$,
 \begin{eqnarray}
 &\D'(\mu(x))=(\mu\o \mu)(\D(x)),\quad \xi'(g(v))=(\mu\o g)(\xi(v)),\quad  \eta'(g(v))=(\mu\o g)(\eta(v)).&\label{eq:homocorep}
 \end{eqnarray}
 \end{defi}

\begin{lem}\label{lem:comcorep} Let $(A, \D)$ and $(A, \d)$ be linearly compatible pre-Lie coalgebras, and $\xi, \eta, \g, \G: V\lr  A\o V$ (write $\xi(v)=v_{-1}\o v_0, \eta(v)=v^{-1}\o v^0, \g(v)=v_{(-1)}\o v_{(0)}$ and $\G(v)=v^{(-1)}\o v^{(0)}$) be four linear maps. Set
$$\xi_{s, t}:=s \xi+t \g, \quad \eta_{s, t}:=s \eta+t \G, \quad s, t\in K.$$
Then $(V, \xi_{s, t}, \eta_{s, t})$ is a corepresentation of $(A, \D_{s, t})$ for all $s, t\in K$ if and only if $(V, \xi, \eta)$ is a co\rep of $(A, \D)$, $(V, \g, \G)$ is a corepresentation of $(A, \d)$, and the equations below hold for all $v\in V$:
 \begin{eqnarray*}
 &&\hspace{-6mm}v_{-1[1]}\o v_{-1[2]}\o v_{0}+v_{(-1)(1)}\o v_{(-1)(2)}\o v_{(0)}-v_{-1[2]}\o v_{-1[1]}\o v_{0}-v_{(-1)(2)}\o v_{(-1)(1)}\o v_{(0)}\\
 &&\quad=v_{-1}\o {v_{0}}^{(-1)}\o {v_{0}}^{(0)}+v_{(-1)}\o {v_{(0)}}^{-1}\o {v_{(0)}}^{0}-{v_{0}}^{(-1)}\o v_{-1}\o {v_{0}}^{(0)}-{v_{(0)}}^{-1}\o v_{(-1)}\o {v_{(0)}}^{0},\label{eq:ca2}\\
 &&\hspace{-6mm}{v^{-1}}_{[1]}\o {v^{-1}}_{[2]}\o {v^{0}}+{v^{(-1)}}_{(1)}\o {v^{(-1)}}_{(2)}\o {v^{(0)}}+{v^{0}}_{(-1)}\o {v^{-1}}\o {v^{0}}_{(0)}+{v^{(0)}}_{-1}\o {v^{(-1)}}\o {v^{(0)}}_{0}\\
 &&\quad v_{-1}\o {v_{0}}^{(-1)}\o {v_{0}}^{(0)}+v_{(-1)}\o {v_{(0})}^{-1}\o {v_{(0})}^{0}+v^{0(-1)}\o v^{-1}\o v^{0(0)}+v^{(0)-1}\o v^{(-1)}\o v^{(0)0}.
 \end{eqnarray*}
 \end{lem}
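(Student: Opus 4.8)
The plan is to substitute the explicit linear combinations $\D_{s,t}=s\D+t\d$ (Eq.~\eqref{eq:comprelieco}) together with $\xi_{s,t}=s\xi+t\g$ and $\eta_{s,t}=s\eta+t\G$ into the two corepresentation axioms \eqref{eq:crep1} and \eqref{eq:crep2} of Definition~\ref{de:corep}, and then to compare coefficients of the monomials $s^2$, $st$, $t^2$. The crucial structural observation is that each of \eqref{eq:crep1} and \eqref{eq:crep2} is \emph{homogeneous of degree two} in the structure maps: every term is a composite of exactly two of the maps $\D,\xi,\eta$ (for instance the left-hand side of \eqref{eq:crep1} is built from $(\D\o\id_V)\circ\xi$ and the right-hand side from $(\id_A\o\eta)\circ\xi$, while the last term on the right of \eqref{eq:crep2} applies $\eta$ twice). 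Since each map enters $K$-linearly through $s$ and $t$, after substitution the axiom for $(V,\xi_{s,t},\eta_{s,t})$ over $(A,\D_{s,t})$ becomes a homogeneous quadratic identity in $s,t$ of the shape $P\,s^2+Q\,st+R\,t^2=0$, where $P,Q,R$ are $A\o A\o V$-valued expressions in $v$ that do not depend on $s,t$.

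First I would expand the $s^2$-, $t^2$- and $st$-contributions of each axiom term by term in Sweedler notation. For \eqref{eq:crep1}, the coefficient $P$ of $s^2$ is precisely \eqref{eq:crep1} written for $(V,\xi,\eta)$ over $(A,\D)$, the coefficient $R$ of $t^2$ is \eqref{eq:crep1} written for $(V,\g,\G)$ over $(A,\d)$, and the cross coefficient $Q$ of $st$ collects the two mixed composites on each side, namely $(\D\o\id_V)\circ\g+(\d\o\id_V)\circ\xi$ on the left and $(\id_A\o\G)\circ\xi+(\id_A\o\eta)\circ\g$ on the right, which assemble into exactly the first displayed compatibility equation in the statement. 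The identical bookkeeping applied to \eqref{eq:crep2} yields the second corepresentation axiom for $(V,\xi,\eta)$ over $(A,\D)$ from the $s^2$-part, for $(V,\g,\G)$ over $(A,\d)$ from the $t^2$-part, and the second displayed compatibility equation from the $st$-part.

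It then remains to convert these polynomial identities into the stated equivalence. For the ``only if'' direction, assume both axioms hold for every $s,t\in K$. Evaluating at $(s,t)=(1,0)$ forces $P=0$, so $(V,\xi,\eta)$ is a corepresentation of $(A,\D)$; evaluating at $(0,1)$ forces $R=0$, so $(V,\g,\G)$ is a corepresentation of $(A,\d)$; and evaluating at $(1,1)$ gives $P+Q+R=0$, whence $Q=0$, which is the corresponding compatibility equation. The three sample points $(1,0),(0,1),(1,1)$ yield an invertible linear system for $(P,Q,R)$ over \emph{any} field, so no hypothesis on $K$ is required. Conversely, if $(V,\xi,\eta)$ and $(V,\g,\G)$ are corepresentations of $(A,\D)$ and $(A,\d)$ respectively and the two compatibility equations hold, then $P=Q=R=0$ for both axioms, so $P\,s^2+Q\,st+R\,t^2=0$ identically; hence $(V,\xi_{s,t},\eta_{s,t})$ is a corepresentation of $(A,\D_{s,t})$ for all $s,t\in K$.

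The main obstacle is purely notational rather than conceptual: one must organize the fourfold expansion of each axiom and track Sweedler indices carefully enough to see that the two $st$-cross terms close up precisely into the displayed compatibility equations, without dropping or transposing any leg. Linear compatibility of $(A,\D)$ and $(A,\d)$ (Lemma~\ref{lem:comcoalg}) guarantees that $(A,\D_{s,t})$ is a genuine pre-Lie coalgebra for every $s,t$, so that ``corepresentation of $(A,\D_{s,t})$'' is a well-posed notion throughout the argument.
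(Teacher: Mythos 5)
Your proposal is correct and takes essentially the same route as the paper: the paper's proof is the one-line remark that the lemma follows ``by checking Eqs.~(\ref{eq:crep1}) and (\ref{eq:crep2}) for $\D_{s,t}$, $\xi_{s,t}$ and $\eta_{s,t}$,'' which is exactly your substitution-and-coefficient-comparison argument. Your elaboration is accurate in the details that matter: both axioms are homogeneous of degree two in the structure maps, the $st$ cross-terms assemble into the two displayed compatibility equations, and evaluating at $(1,0)$, $(0,1)$, $(1,1)$ recovers the coefficients $P$, $Q$, $R$ over any field, so the equivalence holds without restriction on $K$.
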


 \begin{proof} It is direct by checking Eqs.~(\ref{eq:crep1}) and (\ref{eq:crep2}) for $\D_{s, t}$, $\xi_{s, t}$ and $\eta_{s, t}$.
 \end{proof}

 \begin{thm}\label{thm:comcn} Let $(A, \D)$ and $(A, \d)$ be linearly compatible pre-Lie coalgebras. Let $(V, \xi, \eta)$ be a corepresentation of $(A, \D)$, $(V, \xi_{s, t}, \eta_{s, t})$ be a corepresentation of $(A, \D_{s, t})$ for $s,t \in K$, and $S:  A\lr  A, \theta: V\lr V$ be two linear maps.
\begin{enumerate}
\item \label{i:comcn1}
Then $(s\id_ A+t S, s\id_V+t \theta)$ is a corepresentation homomorphism from $(V, \xi, \eta)$ to $(V, \xi_{s, t}, \eta_{s, t})$ for all $s, t\in K$ if and only if for all $x\in  A$ and $v\in V$, the equations below hold:
 \begin{eqnarray}
 &S(x)_{[1]}\o S(x)_{[2]}=S(x_{(1)})\o S(x_{(2)}),&\label{eq:com1}\\
 &S(x)_{(1)}\o S(x)_{(2)}+x_{[1]}\o x_{[2]}=x_{(1)}\o S(x_{(2)})+S(x_{(1)})\o x_{(2)},&\label{eq:com2}\\
 &\theta(v)_{(-1)}\o \theta(v)_{(0)}=S(v_{-1})\o \theta(v_{0}),&\label{eq:com3}\\
 &\theta(v)_{-1}\o \theta(v)_{0}+v_{(-1)}\o v_{(0)}=v_{-1}\o \theta(v_{0})+S(v_{-1})\o v_{0}.&\label{eq:com4}\\
 &\theta(v)^{(-1)}\o \theta(v)^{(0)}=S(v^{-1})\o \theta(v^{0}),&\label{eq:com5}\\
 &\theta(v)^{-1}\o \theta(v)^{0}+v^{(-1)}\o v^{(0)}=v^{-1}\o \theta(v^{0})+S(v^{-1})\o v^{0}.&\label{eq:com6}
 \end{eqnarray}
\item \label{i:comcn2} If one of the equivalent conditions in \eqref{i:comcn1} holds, then  $((A, \D), S)$ is a \N pre-Lie coalgebra, and {\bf $((V, \xi, \eta), \theta)$ is a co\rep of $((A, \D), S)$}, in the sense that, $(V, \xi, \eta)$ is a co\rep of $(A, \D)$, and for all $v\in V$, the following equations hold:
\begin{eqnarray*}
	&S(v_{-1})\o \theta(v_{0})+\theta^2(v)_{-1}\o \theta^2(v)_{0}=S(\theta(v)_{-1})\o \theta(v)_{0}+\theta(v)_{-1}\o \theta(\theta(v)_{0}),&\label{eq:ncrep1}\\
	&S(v^{-1})\o \theta(v^{0})+\theta^2(v)^{-1}\o \theta^2(v)^{0}=S(\theta(v)^{-1})\o \theta(v)^{0}+\theta(v)^{-1}\o \theta(\theta(v)^{0}),&\label{eq:ncrep2}
\end{eqnarray*}
where $\xi(v)=v_{-1}\o v_0, \eta(v)=v^{-1}\o v^0$.
\end{enumerate}
 \end{thm}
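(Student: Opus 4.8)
The plan is to treat each defining identity as a polynomial in the scalars $s$ and $t$ and to extract the six component equations by comparing coefficients. For part~\eqref{i:comcn1} I would put $\mu = s\,\id_A + tS$ and $g = s\,\id_V + t\theta$ and write out the three homomorphism conditions of Eq.~\eqref{eq:homocorep}, namely $\D_{s,t}(\mu(x)) = (\mu\o\mu)(\D(x))$, $\xi_{s,t}(g(v)) = (\mu\o g)(\xi(v))$ and $\eta_{s,t}(g(v)) = (\mu\o g)(\eta(v))$. Expanding the left-hand sides through the linearity of the structure maps and the definition~\eqref{eq:comprelieco}, and the right-hand sides by distributing over the tensor factors, turns each condition into an equality of quadratic polynomials in $(s,t)$ whose $s^2$-coefficients coincide identically. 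Matching the remaining coefficients of the first condition gives Eq.~\eqref{eq:com1} (from $t^2$) and Eq.~\eqref{eq:com2} (from $st$); the second condition yields Eqs.~\eqref{eq:com3} and~\eqref{eq:com4}, and the third Eqs.~\eqref{eq:com5} and~\eqref{eq:com6}. The converse is immediate, since reassembling these coefficients reproduces the three homomorphism conditions for every $s,t$.

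For part~\eqref{i:comcn2} the device I would use is to substitute the Nijenhuis maps into the degree-$st$ relations and then cancel the auxiliary structures with the degree-$t^2$ relations. Concretely, replacing $x$ by $S(x)$ in Eq.~\eqref{eq:com2} produces a relation whose term $S(x)_{[1]}\o S(x)_{[2]}$ is rewritten, via Eq.~\eqref{eq:com1}, as $S(x_{(1)})\o S(x_{(2)})$; the resulting identity is precisely the Nijenhuis condition Eq.~\eqref{eq:nc}, so $((A,\D),S)$ is a \N pre-Lie coalgebra. The two corepresentation identities for $((V,\xi,\eta),\theta)$ follow identically: substituting $\theta(v)$ for $v$ in Eq.~\eqref{eq:com4} and using Eq.~\eqref{eq:com3} to replace $\theta(v)_{(-1)}\o\theta(v)_{(0)}$ by $S(v_{-1})\o\theta(v_{0})$ gives the first, and the same step applied to Eqs.~\eqref{eq:com6} and~\eqref{eq:com5} gives the second; together with the standing hypothesis that $(V,\xi,\eta)$ is a corepresentation of $(A,\D)$ this establishes the claim.

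I expect the only genuine subtlety to be the coefficient comparison in part~\eqref{i:comcn1}: over an arbitrary field $K$ one cannot simply invoke vanishing of polynomial coefficients, so I would instead evaluate the identities---which are assumed to hold for all $s,t\in K$---at the specific points $(s,t)=(0,1)$ to isolate the $t^2$-equations and then at $(s,t)=(1,1)$ to isolate the $st$-equations. The rest is bookkeeping; the one idea worth highlighting is that the second coalgebra structure $\d$ and the auxiliary corepresentation maps never appear in the final conclusions, because each is eliminated, at the level of $S$- and $\theta$-images, by the quadratic relations Eqs.~\eqref{eq:com1}, \eqref{eq:com3} and~\eqref{eq:com5}.
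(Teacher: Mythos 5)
Your proposal is correct and follows essentially the same route as the paper: the paper's proof of part~\eqref{i:comcn1} is exactly the ``routine though tedious'' expansion of Eq.~\eqref{eq:homocorep} for $(s\id_A+tS, s\id_V+t\theta)$ with coefficient matching in $s,t$, and part~\eqref{i:comcn2} is likewise obtained by combining Eqs.~\eqref{eq:com1}--\eqref{eq:com2} (via the substitution $x\mapsto S(x)$) and Eqs.~\eqref{eq:com3}--\eqref{eq:com6} (via $v\mapsto\theta(v)$). Your extra care in replacing formal coefficient comparison by evaluation at $(s,t)=(0,1)$ and $(1,1)$ is a genuine refinement over the paper's terse argument, since it makes the ``for all $s,t\in K$'' hypothesis suffice over arbitrary (including very small) fields.
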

\begin{proof} The proof of \eqref{i:comcn1} is a routine though tedious check of Eq.\,(\ref{eq:homocorep}) for $(s\id_ A+t S, s\id_V+t \theta)$.
The first statement of \eqref{i:comcn2} follows from Eqs.~(\mref{eq:com1}) and (\mref{eq:com2}).
The second statement follows from Eqs.~(\mref{eq:com3})-(\mref{eq:com6}).
\end{proof}

\subsection{Construction of Nijenhuis operators on pre-Lie coalgebras}\label{se:connprelieco} In this subsection, we give another construction of \N pre-Lie coalgebra. First we introduce the notion of pseudo-Hessian pre-Lie coalgebra which is dual to Definition \mref{de:sym}.
 \begin{defi}\label{de:csym} Let $(A, \D)$ be a pre-Lie coalgebra and $r=\sum\limits^n_{i=1} a_i\o b_i$ be a symmetric element in $A\o A$. Assume that
 \begin{eqnarray}\label{eq:csymp1}
 \sum\limits_{i=1}\limits^n ({a_i}_{(1)}\o {a_i}_{(2)}\o b_i-a_i\o {b_i}_{(1)}\o {b_i}_{(2)})=\sum\limits_{i=1}\limits^n({a_i}_{(2)}\o {a_i}_{(1)}\o b_i-{b_i}_{(1)}\o a_i\o {b_i}_{(2)}).
 \end{eqnarray}
 Then we call $(A, \D)$ a {\bf pseudo-Hessian pre-Lie coalgebra} and denote it by $(A, \D, r)$.
 \end{defi}

The following theorem is dual to Theorem \ref{thm:ln} and can be proved by the similar idea. We provide the details in view of the importance of this result and the complexity of the proving process.

 \begin{thm}\label{thm:cln} Let $(A, \D, r)$ be a pseudo-Hessian pre-Lie coalgebra and $\om\in (A\o  A)^*$. For all $x\in A$, define a linear map $S: A\lr  A$ by
 \begin{eqnarray}
 &S(x):=\sum\limits_{i=1}\limits^n a_i\om(b_i, x).&\label{eq:cys}
 \end{eqnarray}
 If $(A, \D, \om, \ci_\om)$ is a dual quasitriangular pre-Lie bialgebra with the multiplication $\ci_\om$ defined in Eq.~\meqref{eq:p}, and additionally, $(A, \ci_\om, r, \D_r)$ is a quasitriangular pre-Lie bialgebra with the comultiplication $\D_r$ defined in Eq.~\meqref{eq:cop}, then $((A, \D), S)$ is a \N pre-Lie coalgebra.
 \end{thm}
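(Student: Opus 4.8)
The statement is the exact linear dual of Theorem~\ref{thm:ln}. Passing to $A^*$ converts the comultiplication $\D$ into a multiplication, the element $r$ into a symmetric bilinear form, and the form $\om$ into an ``$r$-matrix''; under this correspondence the operator $S$ of \eqref{eq:cys} becomes the operator $N$ of \eqref{eq:ys}, the pseudo-Hessian coalgebra relation \eqref{eq:csymp1} becomes the pseudo-Hessian algebra relation \eqref{eq:symp}, the dual quasitriangular hypothesis becomes a quasitriangular one and vice versa, and the co-identity \eqref{eq:nc} becomes the Nijenhuis identity \eqref{eq:n}. This already explains why the result must hold. Since a literal dualization would force finiteness hypotheses and since the computation is instructive, I would instead transcribe the proof of Theorem~\ref{thm:ln} directly, organizing it into the same two blocks: first a preliminary tensor identity dual to \eqref{eq:cqt-sym}, then the verification of \eqref{eq:nc}.

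For the preliminary identity I would begin with the \sss-equation \eqref{eq:cybe} in $(A,\ci_\om)$, which holds because $(A,\ci_\om,r,\D_r)$ is quasitriangular, and expand every occurrence of $\ci_\om$ through its defining formula \eqref{eq:p}. The outcome lives in $A^{\o 3}$ and is then reduced using the pseudo-Hessian coalgebra relation \eqref{eq:csymp1} together with the symmetry of $\om$. This mirrors exactly the opening block of the proof of Theorem~\ref{thm:ln}, where \eqref{eq:ccybe} was expanded via \eqref{eq:cop} and collapsed by \eqref{eq:symp} and the symmetry of $r$, the roles of $\om$ and $r$ (and of the two quasitriangular structures) being interchanged. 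The product is a symmetric identity in the legs of $\D$ and the factors of $r$ that serves as the workhorse for the next block.

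To verify \eqref{eq:nc} I would use $\D(S(x))=\sum_i\D(a_i)\,\om(b_i,x)$ to expand all four terms of
\[
S(x_{(1)})\o S(x_{(2)})+S^2(x)_{(1)}\o S^2(x)_{(2)}-S(S(x)_{(1)})\o S(x)_{(2)}-S(x)_{(1)}\o S(S(x)_{(2)})
\]
as tensors in $A\o A$ assembled from $\D$, $r$ and $\om$. I would then feed in the preliminary identity, simplify with \eqref{eq:csymp1} and the symmetry of $r$, and finally invoke the co-\sss-equation \eqref{eq:ccybe} (available since $(A,\D,\om,\ci_\om)$ is dual quasitriangular) together with the symmetry of $\om$, at which point the remaining contributions cancel and the total reduces to $0$, proving that $((A,\D),S)$ is a \N pre-Lie coalgebra.

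The main obstacle is bookkeeping rather than conceptual difficulty. In contrast with the scalar-valued computation of Theorem~\ref{thm:ln}, the output here is a tensor in $A\o A$, so both tensor slots must be carried simultaneously and the Sweedler legs of $\D(a_i)$ and $\D(b_i)$ kept aligned when \eqref{eq:csymp1} and \eqref{eq:ccybe} are applied in the correct factor. The genuinely delicate choices are deciding, at each cancellation, whether to call on the symmetry of $r$ or of $\om$, and applying the co-\sss-equation in the right tensor position; misplacing a slot is the easiest way to stall the argument, which is presumably why the authors elect to record the computation in full.
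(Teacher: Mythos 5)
Your proposal is correct and takes essentially the same approach as the paper: the paper likewise first derives the preliminary tensor identity (its Eq.~\eqref{eq:cqt-sym1}) by expanding the \sss-equation \eqref{eq:cybe} in $(A,\ci_\om)$ through Eq.~\eqref{eq:p} and reducing with the symmetry hypotheses and Eq.~\eqref{eq:csymp1}, and then verifies Eq.~\eqref{eq:nc} by expanding the four terms through Eq.~\eqref{eq:cys}, feeding in that identity, and concluding with Eq.~\eqref{eq:csymp1}, the symmetry of $r$, and the co-\sss-equation \eqref{eq:ccybe}. The only discrepancy is cosmetic bookkeeping: in the first reduction you invoke the symmetry of $\om$ where the paper's text cites the symmetry of $r$; both are available under the stated hypotheses, and the cancellation of the four offending terms goes through as you describe.
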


\begin{proof} Since $(A, \ci_\om, r, \D_r)$ is a quasitriangular pre-Lie bialgebra and $(A, \D, r)$ is a pseudo-Hessian pre-Lie coalgebra, we calculate
\begin{eqnarray*}
 \hspace{-50mm}0\hspace{-2mm}
 &\stackrel{(\mref{eq:cybe})}{=}&\hspace{-3mm}\sum\limits_{i,j=1}\limits^n \Big(a_i\o b_i\ci_\om a_j\o b_j+a_i\o a_j\o b_i\ci_\om b_j-a_i\ci_\om a_j\o b_i\o b_j-a_i\o a_j\o b_j\ci_\om b_i\Big)\\
 &\stackrel{(\mref{eq:p})}{=}&\hspace{-3mm}\sum\limits_{i,j=1}\limits^n \Big(a_i\o {b_i}_{(1)}\om({b_i}_{(2)}, a_j)\o b_j+a_i\o {a_j}_{(1)}\om(b_i, {a_j}_{(2)}) \o b_j-a_i\o \om(b_i, {a_j}_{(1)}){a_j}_{(1)} \o b_j\\
 && +a_i\o a_j \o {b_i}_{(1)} \om({b_i}_{(2)}, b_j)+a_i\o a_j \o {b_j}_{(1)} \om(b_i, {b_j}_{(2)})-a_i\o a_j \o \om(b_i, {b_j}_{(1)}){b_j}_{(2)}\\
 && -{a_i}_{(1)}\om({a_i}_{(2)}, a_j)\o b_i\o b_j-\om(a_i, {a_j}_{(2)}){a_j}_{(1)}\o b_i \o b_j+\om(a_i, {a_j}_{(1)}){a_j}_{(2)}\o b_i \o b_j\\
 && -a_i\o a_j \o {b_j}_{(1)} \om({b_j}_{(2)}, b_i)-a_i\o a_j \o {b_i}_{(1)} \om(b_j, {b_i}_{(2)})+a_i\o a_j \o \om(b_j, {b_i}_{(1)}){b_i}_{(2)}\Big).
 \end{eqnarray*}
 Then by by the symmetry of $r$, we have
\begin{eqnarray*}
 &&\sum\limits_{i,j=1}\limits^n \Big(a_i\o {b_i}_{(1)}\om({b_i}_{(2)}, a_j)\o b_j+a_i\o {a_j}_{(1)}\om(b_i, {a_j}_{(2)}) \o b_j-a_i\o \om(b_i, {a_j}_{(1)}){a_j}_{(1)} \o b_j\\
 &&\quad-a_i\o a_j \o \om(b_i, {b_j}_{(1)}){b_j}_{(2)}-{a_i}_{(1)}\om({a_i}_{(2)}, a_j)\o b_i\o b_j-\om(a_i, {a_j}_{(2)}){a_j}_{(1)}\o b_i \o b_j\\
 &&\quad+\om(a_i, {a_j}_{(1)}){a_j}_{(2)}\o b_i \o b_j+a_i\o a_j \o \om(b_j, {b_i}_{(1)}){b_i}_{(2)}\Big)=0.
 \end{eqnarray*}
 Therefore, by Eq.\,(\mref{eq:csymp1}), one gets
\begin{eqnarray}
 &&\sum\limits_{i,j=1}\limits^n \Big({a_i}_{(1)}\o {a_i}_{(2)}\om(b_i, a_j)\o b_j-{a_i}_{(2)}\o {a_i}_{(1)}\om(b_i, a_j)\o b_j \label{eq:cqt-sym1}\\
 &&\qquad-a_i\o {b_j}_{(1)}\om(b_i, a_j) \o {b_j}_{(2)}+{b_i}_{(1)}\om(a_j, a_i)\o b_j \o {b_i}_{(2)}\Big)=0.\nonumber
 \end{eqnarray}

 Now we check that the linear map $S$ defined in Eq.~\meqref{eq:cys} is a Nijenhuis operator on $(A, \D)$. For all $x\in  A$,
 \begin{eqnarray*}
 &&\hspace{-12mm}S(x_{(1)})\o S(x_{(2)})-S(S(x)_{(1)})\o S(x)_{(2)}-S(x)_{(1)}\o S(S(x)_{(2)})+S^2(x)_{(1)}\o S^2(x)_{(2)}\\
 &\stackrel{(\mref{eq:cys})}{=}&\hspace{-3mm}\sum\limits_{i,j=1}\limits^n \Big(a_i\o a_j\om(b_i, x_{(1)})\om(b_j, x_{(2)})
 -a_i\o {a_j}_{(2)}\om(b_i, {a_j}_{(1)})\om(b_j, x)-{a_i}_{(1)}\o a_j\om(b_i, x)\om(b_j, {a_i}_{(2)})\\
 &&+{a_i}_{(1)}\o {a_i}_{(2)} \om(b_i, a_j)\om(b_j, x)\Big)\\
 &\stackrel{(\mref{eq:cqt-sym1})}{=}&\hspace{-3mm}\sum\limits_{i,j=1}\limits^n \Big(a_i\o a_j\om(b_i, x_{(1)})\om(b_j, x_{(2)})-a_i\o {a_j}_{(2)}\om(b_i, {a_j}_{(1)})\om(b_j, x)-{a_i}_{(1)}\o a_j\om(b_i, x)\om(b_j, {a_i}_{(2)})\\
 &&+a_i\o {b_j}_{(2)} \om(b_i, a_j)\om({b_j}_{(1)}, x)+{a_i}_{(2)}\o b_j \om(b_i, a_j)\om({a_i}_{(1)}, x)-{a_i}_{(1)}\o b_j\om(b_i, a_j)\om({a_i}_{(2)}, x)\Big)\\
 &\stackrel{(\mref{eq:csymp1})}{=}&\hspace{-3mm}\sum\limits_{i,j=1}\limits^n \Big(a_i\o a_j\big(\om(b_i, x_{(1)})\om(b_j, x_{(2)})+\om(b_i, {b_j}_{(1)})\om({b_j}_{(2)}, x)-\om({r}^2, {b_j}_{(2)})\om({b_j}_{(1)}, x)\\
 &&-\om(b_j, {b_i}_{(2)})\om({b_i}_{(1)}, x)\big)\Big) \hspace{6mm} \hbox{~(also by the symmetry of } r)\\
 &\stackrel{(\mref{eq:ccybe})}{=}&0,
 \end{eqnarray*}
 as desired.
 \end{proof}

\section{Nijenhuis pre-Lie bialgebras}\label{se:rep}
In this section, we establish the bialgebraic theory of \N pre-Lie algebras, including their characterization by matched pairs and their relation with \N \sss-equation and $\mathcal{O}$-operators on \N pre-Lie algebras. We also obtain a construction of \N pre-Lie bialgebra by pseudo-Hessian pre-Lie (co)algebras.

\subsection{Dual representations}\label{se:drep} Let $(A, \ci)$ be a pre-Lie algebra, $V$ a vector space, $\rho, \vp:  A\lr \mathrm{End}(V)$ linear maps. A triple $(V, \rho, \vp)$ is called a {\bf representation of $(A, \ci)$} \cite{Bai2} if
 \begin{eqnarray*}
 &\rho(x\ci y-y\ci x)v=\rho(x)(\rho(y)v)-\rho(y)(\rho(x)v),&\label{eq:rep1}\\
 &\vp(x\ci y)v=\rho(x)(\vp(y)v)-\vp(y)(\rho(x)v)+\vp(y)(\vp(x)v), \quad x, y\in A.&\label{eq:rep2}
 \end{eqnarray*}
 Let $(V_1, \rho_1, \vp_1)$ and $(V_2, \rho_2, \vp_2)$ be representations of $(A, \ci)$. A linear map $f: V_1\lr V_2$ is called a {\bf homomorphism from $(V_1, \rho_1, \vp_1)$ to $(V_2, \rho_2, \vp_2)$} if
 \begin{eqnarray*}
 &\rho_2(x)f(v)=f(\rho_1(x)v),\quad \vp_2(x)f(v)=f(\vp_1(x)v),\quad \forall~x\in A, v\in V_1.&
 \end{eqnarray*}

 \begin{defi}\label{de:repnplie} Let $((A, \ci), N)$ be a \N pre-Lie algebra, $(V, \rho, \vp)$ a representation of $(A, \ci)$ and $\a: V\lr V$ a linear map. Then a quadruple $((V, \rho, \vp), \a)$ is called a {\bf representation of $((A, \ci), N)$} if for all $x\in  A$ and $v\in V$,
 \begin{eqnarray}
 &\rho(N(x))\a(v)+\a^2(\rho(x)v)=\a(\rho(N(x))v)+\a(\rho(x)\a(v)),&\label{eq:nrep1}\\
 &\vp(N(x))\a(v)+\a^2(\vp(x)v)=\a(\vp(N(x))v)+\a(\vp(x)\a(v)).&\label{eq:nrep2}
 \end{eqnarray}
 \end{defi}

\begin{ex}\label{ex:adrepnplie} Let $((A, \ci), N)$ be a \N pre-Lie algebra and $x\in A$. Then $((A, L, R), N)$ is a representation of $((A, \ci), N)$, called the {\bf regular representation of $((A, \ci), N)$}.
 \end{ex}

A \rep of a \N pre-Lie algebra has the following description by a semi-direct product.

 \begin{pro}\label{pro:repnplie} Let $((A, \ci), N)$ be a \N pre-Lie algebra, $(V, \rho, \vp)$ be a representation of $(A, \ci)$ and $\a: V\lr V$ a linear map. Define a product $\ci_{\heartsuit}$ on $ A\oplus V$ by
 \begin{eqnarray}
 &(x+u)\ci_{\heartsuit} (y+v):=x\ci y+\rho(x)v+\vp(y)u, \quad x, y\in A, u, v\in V.&  \label{eq:semip}
 \end{eqnarray}
 and a linear map $N_{ A\oplus V}:  A\oplus V\to   A\oplus V$ by
 \begin{eqnarray}
 &N_{ A\oplus V}(x+v):=(N+\a)(x+v)=N(x)+\a(v).&\label{eq:nsemip}
 \end{eqnarray}
 Then $ A\oplus V$ equipped with the product \eqref{eq:semip} and the linear map \eqref{eq:nsemip} is a \N pre-Lie algebra, denoted by $(A\ltimes_{\rho, \vp} V, N+\a)$, if and only if $((V, \rho, \vp), \a)$ is a representation of $((A, \ci), N)$. In this case, we call $(A\ltimes_{\rho, \vp} V, N+\a)$ {\bf semi-direct product \N pre-Lie algebra}
 \end{pro}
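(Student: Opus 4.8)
The plan is to prove the biconditional by direct verification, establishing that the Nijenhuis-type identities for $(A\ltimes_{\rho,\vp}V, N+\a)$ are equivalent to the representation conditions \eqref{eq:nrep1} and \eqref{eq:nrep2}. The key observation is that the semi-direct product decomposes cleanly along $A$ and $V$, so the single Nijenhuis identity \eqref{eq:n} for the operator $N+\a$ on $A\oplus V$ splits into components that can be analyzed separately.

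First I would recall that $(A\ltimes_{\rho,\vp}V, \ci_{\heartsuit})$ is already known to be a pre-Lie algebra precisely when $(V,\rho,\vp)$ is a representation of $(A,\ci)$; this is the standard semi-direct product construction, so I may take it as given. Thus the content of the proposition lies entirely in matching the Nijenhuis condition. I would expand the defining identity
\begin{eqnarray*}
(N+\a)(w_1)\ci_{\heartsuit}(N+\a)(w_2)+(N+\a)^2(w_1\ci_{\heartsuit}w_2)
=(N+\a)\big((N+\a)(w_1)\ci_{\heartsuit}w_2\big)+(N+\a)\big(w_1\ci_{\heartsuit}(N+\a)(w_2)\big)
\end{eqnarray*}
for $w_1=x+u$ and $w_2=y+v$, then use \eqref{eq:semip} and \eqref{eq:nsemip} to separate the result into its $A$-part and its $V$-part.

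The $A$-part collects only terms of the form $N(x)\ci N(y)$, $N^2(x\ci y)$, etc., and reduces exactly to the Nijenhuis identity \eqref{eq:n} for $N$ on $(A,\ci)$, which holds by hypothesis since $((A,\ci),N)$ is a \N pre-Lie algebra. The $V$-part is where the substance is: here the $\rho$-contributions (coming from $\rho(x)v$ in the product) assemble into \eqref{eq:nrep1}, while the $\vp$-contributions (coming from $\vp(y)u$) assemble into \eqref{eq:nrep2}. Because $\rho(x)v$ and $\vp(y)u$ involve distinct arguments $v$ and $u$, their coefficients must vanish independently, which is what lets me read off both representation conditions separately rather than only their sum.

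The main obstacle will be the careful bookkeeping in the $V$-component, since expanding $(N+\a)^2$ produces cross terms such as $\a(\rho(N(x))v)$ and $\a(\rho(x)\a(v))$ that must be matched precisely against $\rho(N(x))\a(v)$ and $\a^2(\rho(x)v)$; a sign error or a mismatched argument would break the equivalence. To keep this clean I would track the $\rho$-terms and the $\vp$-terms in parallel columns, noting that the computation for $\vp$ is formally identical to that for $\rho$ after swapping the roles of the two arguments, so verifying one essentially yields the other. Once both component identities are shown to coincide termwise with \eqref{eq:nrep1} and \eqref{eq:nrep2}, the biconditional follows immediately, since each step is reversible.
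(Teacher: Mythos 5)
Your proposal is correct and follows essentially the same route as the paper: cite the standard semi-direct product result (that $(A\ltimes_{\rho,\vp}V,\ci_{\heartsuit})$ is pre-Lie when $(V,\rho,\vp)$ is a representation, which is \cite[Proposition 3.1]{Bai2}), then expand the Nijenhuis identity for $N+\a$ and observe that its $A$-component is Eq.~\eqref{eq:n} for $N$ while its $V$-component splits, via the independence of $u$ and $v$, into exactly Eqs.~\eqref{eq:nrep1} and \eqref{eq:nrep2}. The paper's proof is just a terser version of this same argument; your explicit bookkeeping of the cross terms and the specialization $u=0$, $v=0$ fills in precisely what the paper leaves implicit.
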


 \begin{proof} By \cite[Proposition 3.1]{Bai2}, we know that $(A\oplus V, \ci_{\heartsuit}):= A\ltimes_{\rho, \vp} V$ is a pre-Lie algebra if and only if $(V, \rho, \vp)$ is a representation of $(A, \ci)$. On the other hand, by Eq.\,(\ref{eq:nsemip}),
 we know that $N_{ A\oplus V}$ is a \N operator on $ A\ltimes_{\rho, \vp} V$ if and only if Eqs.~(\ref{eq:nrep1}) and (\ref{eq:nrep2}) hold. Thus the proof is completed.
 \end{proof}

 \begin{lem}\label{lem:dualrep} Let $((A, \ci), N)$ be a \N pre-Lie algebra, $(V, \rho, \vp)$ a representation of $(A, \ci)$ and $\b: V\lr V$ a linear map. Define linear maps $\rho^*, \vp^*:  A\lr End(V^*)$ by
$$\langle \rho^*(x) v^*, v \rangle =-\langle v^*, \rho(x)v \rangle,\quad \langle \vp^*(x) v^*, v \rangle =-\langle v^*, \vp(x)v \rangle, \quad x\in  A, v^*\in V^*, v\in V.$$
Then $((V^*, \rho^*-\vp^*, -\vp^*), \b^*)$ is a representation of $((A, \ci), N)$ if and only if for all $x\in  A$ and $v\in V$,
 \begin{eqnarray}
 &\b(\rho(N(x))v)+\rho(x)\b^2(v)=\rho(N(x))\b(v)+\b(\rho(x)\b(v)),&\label{eq:dual3}\\
 &\b(\vp(N(x))v)+\vp(x)\b^2(v)=\vp(N(x))\b(v)+\b(\vp(x)\b(v)).&\label{eq:dual2}
 \end{eqnarray}
 \end{lem}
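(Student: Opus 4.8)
The plan is to reduce the claim to a purely formal dualization of Definition \ref{de:repnplie}. First I would separate the two things that ``being a representation of $((A,\ci),N)$'' requires: that $(V^*,\rho^*-\vp^*,-\vp^*)$ be a representation of the underlying pre-Lie algebra $(A,\ci)$, and that the two Nijenhuis compatibility conditions \eqref{eq:nrep1}--\eqref{eq:nrep2} hold for the quadruple $((V^*,\rho^*-\vp^*,-\vp^*),\b^*)$. The first part is the standard dual-representation construction for pre-Lie algebras (see \cite{Bai2}): it holds unconditionally and does not depend on $\b$, so it contributes nothing to the stated equivalence. Thus the entire content of the lemma is the verification that the two Nijenhuis conditions for the dual quadruple are equivalent to \eqref{eq:dual3} and \eqref{eq:dual2}.

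To organize the bookkeeping I would introduce, for a linear map $\sigma\colon A\lr \End(V^*)$, the ``Nijenhuis defect''
\[
P_\sigma(x,v^*):=\sigma(N(x))\b^*(v^*)+(\b^*)^2(\sigma(x)v^*)-\b^*(\sigma(N(x))v^*)-\b^*(\sigma(x)\b^*(v^*)),
\]
which is linear in $\sigma$. In this notation, condition \eqref{eq:nrep1} for the dual quadruple (whose first structure map is $\rho^*-\vp^*$) reads $P_{\rho^*-\vp^*}=0$, i.e. $P_{\rho^*}-P_{\vp^*}=0$, while \eqref{eq:nrep2} (whose second structure map is $-\vp^*$) reads $P_{-\vp^*}=0$, i.e. $P_{\vp^*}=0$.

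Next I would pair $P_{\vp^*}(x,v^*)$ with an arbitrary $v\in V$ and push every occurrence of $\vp^*$ and $\b^*$ onto $v$ using the defining pairings $\langle \vp^*(x)v^*,v\rangle=-\langle v^*,\vp(x)v\rangle$ and $\langle \b^*v^*,v\rangle=\langle v^*,\b v\rangle$; each transfer across $\vp^*$ produces one sign, and after collecting the four terms one gets
\[
\langle P_{\vp^*}(x,v^*),v\rangle=-\big\langle v^*,\ \b(\vp(N(x))v)+\vp(x)\b^2(v)-\vp(N(x))\b(v)-\b(\vp(x)\b(v))\big\rangle.
\]
Since $v^*$ and $v$ are arbitrary, $P_{\vp^*}=0$ is equivalent to \eqref{eq:dual2}, and the identical computation with $\rho$ in place of $\vp$ shows $P_{\rho^*}=0$ is equivalent to \eqref{eq:dual3}.

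Finally I would assemble the logical equivalences: $P_{\vp^*}=0\Longleftrightarrow\eqref{eq:dual2}$, and, granting this, $P_{\rho^*}-P_{\vp^*}=0\Longleftrightarrow P_{\rho^*}=0\Longleftrightarrow\eqref{eq:dual3}$; hence both Nijenhuis conditions for the dual quadruple hold simultaneously if and only if both \eqref{eq:dual3} and \eqref{eq:dual2} hold. The only step requiring care is the sign bookkeeping, and in particular the fact that the first structure map of the dual representation is the combination $\rho^*-\vp^*$ rather than a single dual map; this forces one to work with the two defects $P_{\rho^*}$ and $P_{\vp^*}$ together and take the linear combination prescribed by \eqref{eq:nrep1}, rather than treating the two conditions in isolation. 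Beyond that there is no genuine obstacle, as the argument is a formal transpose of Definition \ref{de:repnplie}.
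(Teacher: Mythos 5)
Your proposal is correct and follows essentially the same route as the paper's own proof: both cite the dual-representation result of \cite{Bai2} for the underlying pre-Lie structure, identify condition \eqref{eq:nrep2} for $-\vp^*$ with \eqref{eq:dual2}, and then observe that condition \eqref{eq:nrep1} for $\rho^*-\vp^*$ is the difference of the two defects, hence equivalent to \eqref{eq:dual3} once \eqref{eq:dual2} is granted. Your defect notation $P_\sigma$ and its linearity in $\sigma$ merely streamline the bookkeeping that the paper carries out by writing the difference equation explicitly.
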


 \begin{proof} By \cite[Proposition 3.3]{Bai2}, $(V^*, \rho^*-\vp^*, -\vp^*)$ is a representation of $(A, \ci)$. Furthermore, Eq.\,(\ref{eq:nrep2}) holds for $-\vp^*$ if and only if Eq.\,(\ref{eq:dual2}) holds. Eq.\,(\ref{eq:nrep1}) holds for $\rho^*-\vp^*$ if and only if for all $x\in  A$ and $v\in V$,
 \begin{eqnarray*}
 &&\b(\vp(N(x))v)-\b(\rho(N(x))v)+\vp(x)\b^2(v)-\rho(x)\b^2(v)\\
 &&\qquad=\vp(N(x))\b(v)-\rho(N(x))\b(v)+\b(\vp(x)\b(v))-\b(\rho(x)\b(v)),
 \end{eqnarray*}
 which is equivalent to Eq.\,(\ref{eq:dual3}) by Eq.\,(\ref{eq:dual2}). Then the proof is completed.
 \end{proof}

 Next we apply the above result to the regular representation.
 \begin{cor}\label{cor:lem:dualrep} Let $((A, \ci), N)$ be a \N pre-Lie algebra. For a linear map $S:  A\lr  A$, the pair $((A^*, L^*-R^*, -R^*), S^*)$ is a representation of $((A, \ci), N)$ if and only if for all $x, y\in  A$,
 \begin{eqnarray}
 &S(N(x)\ci y)+x\ci S^2(y)=N(x)\ci S(y)+S(x\ci S(y)),&\label{eq:dual3-1}\\
 &S(x\ci N(y))+S^2(x)\ci y=S(x)\ci N(y)+S(S(x)\ci y).&\label{eq:dual2-1}
 \end{eqnarray}
 \end{cor}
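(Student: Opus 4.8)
The plan is to obtain this corollary as a direct specialization of Lemma \ref{lem:dualrep} to the regular representation. By Example \ref{ex:adrepnplie}, the triple $(A, L, R)$ is a representation of $(A, \ci)$, where $L_x y = x\ci y$ and $R_x y = y\ci x$. First I would take $V = A$, $\rho = L$, $\vp = R$ and $\b = S$ in Lemma \ref{lem:dualrep}. Then the dual data become $\rho^*-\vp^* = L^*-R^*$, $-\vp^* = -R^*$ and $\b^* = S^*$, so the quadruple whose representation property we are testing is exactly $((A^*, L^*-R^*, -R^*), S^*)$, matching the statement. Thus the corollary amounts to rewriting the two equivalent conditions \eqref{eq:dual3} and \eqref{eq:dual2} furnished by the Lemma in terms of the product $\ci$.

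Next I would carry out the substitutions explicitly. Putting $\rho = L$, $\b = S$ and $v = y$ into \eqref{eq:dual3} sends $\rho(N(x))v$ to $N(x)\ci y$, $\rho(x)\b^2(v)$ to $x\ci S^2(y)$, $\rho(N(x))\b(v)$ to $N(x)\ci S(y)$, and $\b(\rho(x)\b(v))$ to $S(x\ci S(y))$, so \eqref{eq:dual3} collapses verbatim to \eqref{eq:dual3-1}. Similarly, putting $\vp = R$, $\b = S$ and $v = y$ into \eqref{eq:dual2}, and using $R_{N(x)}y = y\ci N(x)$, yields the identity $S(y\ci N(x)) + S^2(y)\ci x = S(y)\ci N(x) + S(S(y)\ci x)$. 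Since $x$ and $y$ range over all of $A$, interchanging their names produces precisely \eqref{eq:dual2-1}. Combining these two translations with the equivalence in Lemma \ref{lem:dualrep} completes the argument.

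I do not expect a genuine obstacle here: the entire content sits in Lemma \ref{lem:dualrep}, and the corollary is recovered by a mechanical substitution into the regular representation. The only point that demands care is the asymmetry between the left and right multiplications $L$ and $R$: because $R_x$ acts on the second argument from the right, the condition arising from $\vp = R$ appears naturally with the two arguments in the opposite order, and one must relabel $x\leftrightarrow y$ to bring it into the form \eqref{eq:dual2-1} as stated.
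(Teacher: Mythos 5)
Your proposal is correct and is exactly the paper's intended argument: the paper states this corollary with no separate proof, introducing it only by the sentence ``Next we apply the above result to the regular representation,'' which is precisely your specialization of Lemma \ref{lem:dualrep} to $(V,\rho,\vp,\b)=(A,L,R,S)$. Your explicit substitutions check out, including the one point of genuine care — that $\vp=R$ acts by $R_x y = y\ci x$, so the condition from \eqref{eq:dual2} comes out with the arguments transposed and needs the relabeling $x\leftrightarrow y$ to match \eqref{eq:dual2-1}.
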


 \begin{defi}\label{de:admop} Let $((A, \ci), N)$ be a \N pre-Lie algebra, $(V, \rho, \vp)$ be a representation of $(A, \ci)$ and $\b: V\lr V$ be a linear map. The \N pre-Lie algebra $((A, \ci), N)$ is called {\bf $\b$-admissible with respect to $(V, \rho, \vp)$} if Eqs.~(\ref{eq:dual3}) and (\ref{eq:dual2}) hold. When Eqs.~(\ref{eq:dual3-1}) and (\ref{eq:dual2-1}) hold, we say that $((A, \ci), N)$ is {\bf $S$-admissible}.
 \end{defi}

 \begin{rmk}\label{rmk:dualrep}
  \begin{enumerate}
    \item $((A, \ci), N)$ is not naturally $\a$-admissible with respect to $(V, \rho, \vp)$ even if $((V, \rho, \vp), \a)$ is a representation of $((A, \ci), N)$.
    \item $((A, \ci), N)$ is $N^*$-admissible with respect to $(A^*, L^*-R^*, -R^*)$.
 \end{enumerate}
 \end{rmk}

 \subsection{Nijenhuis pre-Lie bialgebras and their characterizations} \label{se:equi}
We now introduce our main notion.
 \begin{defi}\label{de:nliebialg} A {\bf \N pre-Lie bialgebra} is a quintuple $(A, \ci, \D, N, S)$, where
  \begin{enumerate}
    \item \label{it:nliebialg1} $((A, \ci), N)$ is a \N pre-Lie algebra;
    \item \label{it:nliebialg2} $((A, \D), S)$ is a \N pre-Lie coalgebra;
    \item \label{it:nliebialg3} $(A, \ci, \D)$ is a pre-Lie bialgebra;
    \item \label{it:nliebialg4} $((A, \ci), N)$ is $S$-admissible, i.e., Eqs.~(\ref{eq:dual3-1}) and (\ref{eq:dual2-1}) hold;
    \item \label{it:nliebialg5} $((A^*, \D^*), S^*)$ is $N^*$-admissible, i.e., for all $x\in  A$,
         \begin{eqnarray}
           &N(x)_{(1)}\o S(N(x)_{(2)})+N^2(x_{(1)})\o x_{(2)}=N(x_{(1)})\o S(x_{(2)})+N(N(x)_{(1)})\o N(x)_{(2)},&\label{eq:nliebialg1}\\
           &S(N(x)_{(1)})\o N(x)_{(2)}+x_{(1)}\o N^2(x_{(2)})=S(x_{(1)})\o N(x_{(2)})+N(x)_{(1)}\o N(N(x)_{(2)}).&\label{eq:nliebialg2}
         \end{eqnarray}
 \end{enumerate}
 \end{defi}

 \begin{ex}\label{ex:nliebialg} Let $(A, \ci)$ be the pre-Lie algebra given in Example \ref{ex:sym}, and $\l, \vartheta, k_1, k_2, k_3, \ell_1, \ell_2, \ell_3$ be parameters.
 \begin{enumerate}[(I)]
    \item Set
   \begin{center}$ \left\{
        \begin{array}{l}
        \D(e):=k_2 e\o e,\\
        \D(f):=-2 k_1 e\o e-k_2 e\o f
        \end{array}\right. $
             \end{center}
        Then $(A, \D)$ is a pre-Lie coalgebra and further $(A, \ci, \D)$ is a pre-Lie bialgebra. Define the linear maps $N, S: A\lr  A$ by:
      \begin{center} $\left\{
        \begin{array}{l}
         N(e):=k_2 \ell_3 e\\
         N(f):=(k_1 \ell_3+k_2 \ell_2)e+k_2 \ell_3 f\\
         \end{array}
           \right.$,\quad $S=N$.
             \end{center}
 Then by a direct computation we find that $(A, \ci, \D, N, S)$ is a \N pre-Lie bialgebra.
    \item Set
   \begin{center}$ \left\{
        \begin{array}{l}
        \D(e):=k_3 f\o e,\\
        \D(f):=\l k_3 f\o f
        \end{array}\right. $
             \end{center}
        Then $(A, \D)$ is a pre-Lie coalgebra and further $(A, \ci, \D)$ is a pre-Lie bialgebra. Define the linear maps $N, S: A\lr  A$ by:
      \begin{center} $\left\{
        \begin{array}{l}
         N(e):=0\\
         N(f):=k_3 \ell_2 f\\
         \end{array}
           \right.$,\quad $\left\{
        \begin{array}{l}
         S(e):=k_3 \ell_2 e\\
         S(f):=k_3 \ell_2 f\\
         \end{array}
           \right.$
             \end{center}
   Then $(A, \ci, \D, N, S)$ is a \N pre-Lie bialgebra.
    \item Set
   \begin{center}$ \left\{
        \begin{array}{l}
        \D(e)=0,\\
        \D(f)=-2 k_1 e\o e
        \end{array}\right. $
             \end{center}
        Then $(A, \D)$ is a pre-Lie coalgebra and further $(A, \ci, \D)$ is a pre-Lie bialgebra. Define the linear maps $N, S: A\lr  A$ by:
      \begin{center} $\left\{
        \begin{array}{l}
         N(e)=0\\
         N(f)=k_1 \ell_3 f\\
         \end{array}
           \right.$,\quad $\left\{
        \begin{array}{l}
         S(e)=0\\
         S(f)=\vartheta f\\
         \end{array}
           \right.$
             \end{center}
   Then  $(A, \ci, \D, N, S)$ is a \N pre-Lie bialgebra.
 \end{enumerate}
 \end{ex}

We now use pseudo-Hessian pre-Lie (co)algebras and solutions of the (co-)\sss-equation to construct \N pre-Lie bialgebras.
\begin{thm}\label{rmk:n-cn} Let $(A, \ci, \D)$ be a pre-Lie bialgebra. Assume that $\om\in (A\o A)^*$ and $r=\sum\limits^n_{i=1} a_i\o b_i\in  A\o A$ satisfy the condition of Theorem \mref{thm:ln}, $\varpi\in (A\o A)^*$ and $\mathfrak{r}=\sum\limits^n_{i=1} c_i\o d_i\in  A\o A$ satisfy the condition of Theorem \mref{thm:cln}. Then $(A, \ci, \D, N, S)$ is a \N pre-Lie bialgebra if and only if
 \begin{eqnarray*}
 &&\hspace{-2.5mm}\sum\limits_{i,j=1}\limits^n \Big(\om(x, a_i)\big(c_j\varpi(d_j, b_i\ci y)-b_i\ci c_j\varpi(d_j, y)\big)+\big(x\ci c_i\varpi(d_i, c_j)-c_i\varpi(d_i, x\ci c_j)\big)\varpi(d_j, y)\Big)=0,\\
 &&\hspace{-2.5mm}\sum\limits_{i,j=1}\limits^n \Big(\om(y, a_i)\big(c_j\varpi(d_j, x\ci b_i)-c_j\ci b_i\varpi(d_j, x)\big)+\big(c_i\ci y\varpi(d_i, c_j)-c_i\varpi(d_i, c_j\ci y)\big)\varpi(d_j, x)\Big)=0,\\
 &&\hspace{-2.5mm}\sum\limits_{i,j=1}\limits^n \Big(\om(x, a_i)\big(b_{i(1)}\o c_j\varpi(d_j, b_{i(2)})-\om(b_{i(1)}, a_j)b_j\o b_{i(2)}\big)\\
 &&\hspace{60mm}+\om(x_{(1)}, a_i)\big(\om(b_i, a_j)b_j\o x_{(2)}-b_i\o c_j\varpi(d_j, x_{(2)})\big)\Big)=0,\\
 &&\hspace{-2.5mm}\sum\limits_{i,j=1}\limits^n \Big(\om(x, a_i)\big(c_j\o b_{i(2)}\varpi(d_j, b_{i(1)})-\om(b_{i(2)}, a_j)b_{i(1)}\o b_j\big)\\
 &&\hspace{60mm}+\om(x_{(2)}, a_i)\big(\om(b_i, a_j)x_{(1)}\o b_j-c_j\o b_i\varpi(d_j, x_{(1)})\big)\Big)=0.
 \end{eqnarray*}
 \end{thm}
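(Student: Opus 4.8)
The plan is to run directly through the five defining conditions of Definition \ref{de:nliebialg} and observe that three of them hold automatically, leaving only two to analyze. Condition \ref{it:nliebialg3} (that $(A,\ci,\D)$ be a pre-Lie bialgebra) is assumed outright. Since $\om$ and $r$ satisfy the hypotheses of Theorem \ref{thm:ln}, the map $N(x)=\sum_{i=1}^n\om(x,a_i)b_i$ of Eq.~\eqref{eq:ys} is a Nijenhuis operator on $(A,\ci)$, which is condition \ref{it:nliebialg1}; and since $\varpi$ and $\mathfrak r$ satisfy the hypotheses of Theorem \ref{thm:cln}, the map $S(x)=\sum_{i=1}^n c_i\varpi(d_i,x)$ of Eq.~\eqref{eq:cys} is a Nijenhuis operator on $(A,\D)$, which is condition \ref{it:nliebialg2}. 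Hence $(A,\ci,\D,N,S)$ is a \N pre-Lie bialgebra if and only if the $S$-admissibility equations \eqref{eq:dual3-1}--\eqref{eq:dual2-1} and the $N^*$-admissibility equations \eqref{eq:nliebialg1}--\eqref{eq:nliebialg2} hold. The whole content of the theorem is thus that these four equations, once $N$ and $S$ are written out, are precisely the four displayed identities.

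Next I would substitute the explicit formulas for $N$ and $S$ into each of the four remaining equations and match terms. Expanding \eqref{eq:dual3-1} with $N(x)=\sum_i\om(x,a_i)b_i$ and $S(x)=\sum_j c_j\varpi(d_j,x)$, every term becomes a double sum over $i,j$; separating the terms carrying the factor $\om(x,a_i)$ (which produce $c_j\varpi(d_j,b_i\ci y)$ and $b_i\ci c_j\,\varpi(d_j,y)$) from those built out of $x\ci c_i$ reproduces the first displayed equation, and the analogous expansion of \eqref{eq:dual2-1} yields the second. For the coalgebra side I would first record that $\D(N(x))=\sum_i\om(x,a_i)\,b_{i(1)}\o b_{i(2)}$, so that $N(x)_{(1)}\o N(x)_{(2)}=\sum_i\om(x,a_i)\,b_{i(1)}\o b_{i(2)}$ and $N^2(x_{(1)})=\sum_{i,j}\om(x_{(1)},a_i)\om(b_i,a_j)b_j$; feeding these (together with $S$ applied to the appropriate coproduct leg) into \eqref{eq:nliebialg1} and grouping the $\om(x,a_i)$-terms against the $\om(x_{(1)},a_i)$-terms gives the third displayed equation, while the same procedure applied to \eqref{eq:nliebialg2}, now with $\om(x_{(2)},a_i)$-terms, gives the fourth.

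The main obstacle is purely bookkeeping: each of the four equations unfolds into four Sweedler-indexed double sums, and one must keep careful track of which tensor slot each coproduct leg and each pairing occupies so that the groups recombine exactly into the stated identities. No genuinely new structural input is required at this stage, which is worth emphasizing: the symmetry of $r$ and $\mathfrak r$ and the \sss-equation and co-\sss-equation have already been consumed inside Theorems \ref{thm:ln} and \ref{thm:cln} to secure conditions \ref{it:nliebialg1} and \ref{it:nliebialg2}, so the remaining equivalence is a formal substitution and term-matching rather than a computation appealing to those hypotheses again. To keep the indices manageable I would organize the verification into two steps, treating the $S$-admissibility pair \eqref{eq:dual3-1}--\eqref{eq:dual2-1} first and the $N^*$-admissibility pair \eqref{eq:nliebialg1}--\eqref{eq:nliebialg2} second, and then assemble the ``if and only if'' conclusion from the reductions established above.
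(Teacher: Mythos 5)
Your proposal is correct and follows essentially the same route as the paper: conditions (a)--(c) of Definition \ref{de:nliebialg} are secured by hypothesis together with Theorems \ref{thm:ln} and \ref{thm:cln}, and the four displayed identities are exactly the $S$-admissibility equations \eqref{eq:dual3-1}--\eqref{eq:dual2-1} and the $N^*$-admissibility equations \eqref{eq:nliebialg1}--\eqref{eq:nliebialg2} after substituting $N(x)=\sum_i\om(x,a_i)b_i$ and $S(x)=\sum_j c_j\varpi(d_j,x)$. Your term-by-term matching (including the correct pairing of each displayed equation with its source equation, in the order you state) is precisely what the paper's brief proof asserts.
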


\begin{proof}
Theorems \ref{thm:ln} and \ref{thm:cln} guarantee that, with $N$ given by Eq.\,(\mref{eq:ys}) and $S$ given by Eq.\,(\mref{eq:cys}), the pair  $((A, \ci), N)$ is a \N pre-Lie algebra and $((A, \D), S)$ is a \N pre-Lie coalgebra. Then the proof is completed by observing that the equations in the theorem are obtained from replacing $N$ and $S$ respectively by their expressions in Theorems \ref{thm:ln} and \ref{thm:cln}.
\end{proof}

 Pre-Lie bialgebra can be characterized by a matched pair of pre-Lie algebras. We extend the definition of matched pair of pre-Lie algebras in \cite[Theorem 3.5]{Bai2} to the \N case.

 \begin{defi}\label{de:mpnlie} A {\bf matched pair of Nijenhuis pre-Lie algebras} is a sextuple
 	$$\big(((A, \ci_ A), N_ A), ((\hh, \ci_\hh), N_\hh), \rho_ A, \vp_ A, \rho_\hh, \vp_\hh\big),$$
where
 \begin{enumerate}
 \item
 $((A, \ci_ A), N_ A)$ and $((\hh, \ci_\hh), N_\hh)$ are \N pre-Lie algebras,
 \item
 $\big((\hh, \rho_ A, \vp_ A), N_\hh\big)$ is a representation of $\big((A, \ci_ A), N_ A\big)$,
 \item $\big((A, \rho_\hh, \vp_\hh), N_ A\big)$ is a representation of $\big((\hh, \ci_\hh), N_\hh\big)$, and
 \item $\big((A, \ci_ A), (\hh, \ci_\hh), \rho_ A, \vp_ A, \rho_\hh, \vp_\hh\big)$ is a matched pair of the pre-Lie algebras $(A, \ci_ A)$ and $(\hh, \ci_\hh)$.
\end{enumerate}
Here the last condition \cite{Bai2} means that the following equations hold for all $x, y\in  A$ and $a, b\in \hh$:
{\small \begin{eqnarray}
 &\rho_ A(x)(a\ci_\hh b)=-\rho_ A(\rho_\hh(a)x-\vp_\hh(a)x)b+(\rho_ A(x)a-\vp_ A(x)a)\ci_\hh b+\vp_ A(\vp_\hh(b)x)a+a\ci_\hh (\rho_ A(x)b),&\label{eq:mplie1}\\
 &\vp_ A(x)(a\ci_\hh b-b\ci_\hh a)=\vp_ A(\rho_\hh(b)x)a-\vp_ A(\rho_\hh(a)x)b+a\ci_\hh (\vp_ A(x)b)-b\ci_\hh (\vp_ A(x)a),&\label{eq:mplie2}\\
 &\rho_\hh(a)(x\ci_ A y)=-\rho_\hh(\rho_ A(x)a-\vp_ A(x)a)y+(\rho_\hh(a)x-\vp_\hh(a)x)\ci_ A y+\vp_\hh(\vp_ A(y)a)x+x\ci_ A (\rho_\hh(a)y),&\label{eq:mplie3}\\
 &\vp_\hh(a)(x\ci_ A y-y\ci_ A x)=\vp_\hh(\rho_ A(y)a)x-\vp_\hh(\rho_ A(x)a)y+x\ci_ A (\vp_\hh(a)y)-y\ci_ A (\vp_\hh(a)x).&\label{eq:mplie4}
 \end{eqnarray}
}
 \end{defi}

 \begin{pro}\label{pro:mpnlie} Let $((A, \ci_ A), N_ A)$ and $((\hh, \ci_\hh), N_\hh)$ be Nijenhuis pre-Lie algebras. Assume that $((\hh, \rho_ A, \vp_ A),$ $N_\hh)$ is a representation of $((A, \ci_ A), N_ A)$ and $((A, \rho_\hh, \vp_\hh), N_ A)$ is a representation of $((\hh, \ci_\hh), N_\hh)$. Define a product $\ci_{\star}$ on $ A\oplus \hh$ by
 \begin{eqnarray*}
 &(x+a)\ci_\star (y+b)=x\ci_ A y+\rho_\hh(a)y+\vp_\hh(b)x+a\ci_\hh b+\rho_ A(x)b+\rho_ A(y)a,&\label{eq:mpnliep}
 \end{eqnarray*}
 and a linear map $N_\star$ on $ A\oplus \hh$ by
 \begin{eqnarray*}
 &N_\star(x+a):=N_ A(x)+N_\hh(a),\quad x, y\in  A, a, b\in \hh. &\label{eq:mpnlien}
 \end{eqnarray*}
Then $((A\oplus \hh, \ci_{\star}), N_\star)$ is a \N pre-Lie algebra if and only if Eqs.~\eqref{eq:mplie1}--\eqref{eq:mplie4} hold. In this case, we denote $((A\oplus \hh, \ci_{\star}), N_\star)$ by $(A\bowtie \hh, N_ A+N_\hh)$.
 \end{pro}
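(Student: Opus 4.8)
The plan is to separate the two requirements in the definition of a Nijenhuis pre-Lie algebra: that $(A\oplus\hh,\ci_\star)$ be a pre-Lie algebra, and that $N_\star$ satisfy the Nijenhuis identity \eqref{eq:n} for $\ci_\star$. The pre-Lie algebra part is already settled by \cite[Theorem 3.5]{Bai2}, which states precisely that $(A\oplus\hh,\ci_\star)$ is a pre-Lie algebra if and only if the matched pair conditions \eqref{eq:mplie1}--\eqref{eq:mplie4} hold. So the whole content of the proposition reduces to showing that, under the standing hypotheses, the identity \eqref{eq:n} for $N_\star$ holds identically, with no extra constraint. Granting this, the claimed biconditional follows at once: being a Nijenhuis pre-Lie algebra amounts to being a pre-Lie algebra together with an automatically satisfied Nijenhuis identity, and the former is equivalent to \eqref{eq:mplie1}--\eqref{eq:mplie4}.

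To verify the Nijenhuis identity, I would take $u=x+a$ and $w=y+b$ with $x,y\in A$ and $a,b\in\hh$, expand \eqref{eq:n} for $N_\star$ and $\ci_\star$, and project onto the two direct summands $A$ and $\hh$. Since $N_\star=N_A+N_\hh$ is diagonal, it preserves each summand, so the $A$-component and the $\hh$-component can be treated independently. The key structural observation is that the $A$-component splits cleanly into three groups of terms, with no surviving cross terms: those built purely from $\ci_A$, those built from the left action $\rho_\hh$, and those built from the right action $\vp_\hh$.

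I would then match each group with an available hypothesis. The pure $\ci_A$ group is exactly
\[
N_A(x)\ci_A N_A(y)+N_A^2(x\ci_A y)-N_A(N_A(x)\ci_A y)-N_A(x\ci_A N_A(y)),
\]
which vanishes because $N_A$ is a Nijenhuis operator on $(A,\ci_A)$. The $\rho_\hh$-group is precisely the left-minus-right of \eqref{eq:nrep1} for the representation $\big((A,\rho_\hh,\vp_\hh),N_A\big)$ of $\big((\hh,\ci_\hh),N_\hh\big)$, evaluated at $a$ and $y$, and the $\vp_\hh$-group is precisely \eqref{eq:nrep2} for the same representation, evaluated at $b$ and $x$; both vanish by the assumption that $\big((A,\rho_\hh,\vp_\hh),N_A\big)$ is a representation of $\big((\hh,\ci_\hh),N_\hh\big)$. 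By the symmetry of the construction under interchanging the roles of $A$ and $\hh$, the $\hh$-component vanishes in the same way, using that $N_\hh$ is Nijenhuis on $(\hh,\ci_\hh)$ and that $\big((\hh,\rho_A,\vp_A),N_\hh\big)$ is a representation of $\big((A,\ci_A),N_A\big)$. Hence $N_\star$ satisfies \eqref{eq:n} unconditionally.

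The computation is bookkeeping rather than genuine difficulty; the real point, and the step I would check most carefully, is confirming that the expansion of \eqref{eq:n} for $N_\star$ produces \emph{no} mixed terms coupling $\rho_\hh$ with $\vp_\hh$, nor terms mixing the $A$- and $\hh$-summands beyond the linear actions already present. This cleanness is what lets each group be absorbed by a single hypothesis, and it is the two-sided matched-pair analog of the one-sided semi-direct-product computation in Proposition \ref{pro:repnplie}. Once this term-by-term matching is confirmed, one concludes that $N_\star$ is a Nijenhuis operator as soon as $\ci_\star$ is pre-Lie, so $\big((A\oplus\hh,\ci_\star),N_\star\big)$ is a Nijenhuis pre-Lie algebra if and only if \eqref{eq:mplie1}--\eqref{eq:mplie4} hold.
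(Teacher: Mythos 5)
Your proposal is correct and follows essentially the same route as the paper: the pre-Lie part is delegated to \cite[Theorem 3.5]{Bai2}, and the Nijenhuis identity for $N_\star$ is verified by expanding and matching terms against the Nijenhuis conditions for $N_A$, $N_\hh$ and the representation conditions \eqref{eq:nrep1}--\eqref{eq:nrep2} of the two assumed representations. Your organization (projecting onto the two summands and grouping into the $\ci$-, $\rho$- and $\vp$-blocks, each killed by one hypothesis) is exactly what the paper's ``compare the two equalities'' step amounts to.
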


 \begin{proof} Based on \cite[Theorem 3.5]{Bai2}, $(A\oplus \hh, \ci_{\star})$ is a pre-Lie algebra if and only if Eqs.~(\ref{eq:mplie1})--(\ref{eq:mplie4}) hold. So we only need to check that $N_\star$ satisfies Eq.\,(\ref{eq:n}) if and only if Eqs.\meqref{eq:nrep1} and \meqref{eq:nrep2} hold. In fact, for all $x, y\in  A$ and $a, b\in \hh$, we have
 \begin{eqnarray*}
 &&\hspace{-6mm}N_\star(x+a)\ci_\star N_\star(y+b)+N_\star^2((x+a)\ci_\star (y+b))\\
 &&=N_ A(x)\ci_ A N_ A(y)+\rho_\hh(N_\hh(a))N_ A(y)+\vp_\hh(N_\hh(b))N_ A(x)+N_\hh(a)\ci_\hh N_\hh(b)+\rho_ A(N_ A(x))N_\hh(b)\\
 &&\quad+\vp_ A(N_ A(y))N_\hh(a)+N_ A^2(x\ci_ A y)+N_ A^2(\rho_\hh(a)y)+N_ A^2(\vp_\hh(b)x)+N_\hh^2(a\ci_\hh b)\\
 &&\quad+N_\hh^2(\rho_ A(x)b)+N_\hh^2(\vp_ A(y)a),\\
 &&\hspace{-6mm} N_\star(N_\star(x+a)\ci_\star (y+b))+N_\star((x+a)\ci_\star N_\star(y+b))\\
 &&=N_ A([N_ A(x), y]_ A)+N_ A(\rho_\hh(N_\hh(a))y)+N_ A(\rho_\hh(b)N_ A(x))+N_\hh([N_\hh(a), b]_\hh)+N_\hh(\rho_ A(N_ A(x))b)\\
 &&\quad+N_\hh(\rho_ A(y)N_\hh(a))+N_ A([x, N_ A(y)]_ A)+N_ A(\rho_\hh(a)N_ A(y))+N_ A(\rho_\hh(N_\hh(b))x)+N_\hh([a, N_\hh(b)]_\hh)\\
 &&\quad+N_\hh(\rho_ A(x)N_\hh(b))+N_\hh(\rho_ A(N_ A(y))a).
 \end{eqnarray*}
 Then we can finish the proof by comparing the two equalities above.
 \end{proof}

 \begin{rmk} \label{rmk:pro:mpnlie} If $(((A, \ci_ A), N_ A), ((\hh, \ci_\hh), N_\hh), \rho_ A, \vp_ A, \rho_\hh, \vp_\hh)$ is a matched pair of Nijenhuis pre-Lie algebras $((A, \ci_ A), N_ A)$ and $((\hh, \ci_\hh), N_\hh)$, then we have a Nijenhuis pre-Lie algebra $(A\bowtie \hh, N_ A+N_\hh)$.
 \end{rmk}

\begin{thm} \label{thm:matchandnlieb} Let $((A, \ci), N)$ and $((A^*, \cdot_\ci:=\D^*), S^*)$ be \N pre-Lie algebras. Then the quadruple $(A, \ci, \D, N, S)$ is a \N pre-Lie bialgebra if and only if $(((A, \ci), N), ((A^*, \cdot_\ci), S^*), L^*$ $-R^*, -R^*, \mathbb{L}^*-\mathbb{R}^*, -\mathbb{R}^*)$ is a matched pair of Nijenhuis pre-Lie algebras $((A, \ci), N)$ and $((A^*, \cdot_\ci), S^*)$.
 \end{thm}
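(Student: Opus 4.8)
The plan is to prove the equivalence by matching, one at a time, the five defining conditions of a \N pre-Lie bialgebra in Definition \ref{de:nliebialg} against the four defining conditions of a matched pair of \N pre-Lie algebras in Definition \ref{de:mpnlie}, specialized to $\hh=A^*$ with $\ci_\hh=\cdot_\ci=\D^*$ and $N_\hh=S^*$, and with the dual regular representations $\rho_A=L^*-R^*$, $\vp_A=-R^*$, $\rho_\hh=\mathbb{L}^*-\mathbb{R}^*$, $\vp_\hh=-\mathbb{R}^*$. Because each matching is supplied by a fact established earlier, the argument is largely a bookkeeping of equivalences rather than a fresh computation, and since every matching is an ``if and only if'' it runs in both directions.

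First I would dispose of the structural hypotheses. The two standing assumptions of the theorem, that $((A,\ci),N)$ and $((A^*,\cdot_\ci),S^*)$ are \N pre-Lie algebras, are exactly the first condition of Definition \ref{de:mpnlie}. On the bialgebra side the first assumption is condition \eqref{it:nliebialg1} of Definition \ref{de:nliebialg}, while the second gives condition \eqref{it:nliebialg2} through the standard dualization $(A,\D)\mapsto(A^*,\D^*)$: the pre-Lie coalgebra axiom \eqref{eq:cl} is the linear transpose of the pre-Lie algebra axiom \eqref{eq:l}, and the \N coalgebra identity \eqref{eq:nc} is the transpose of the \N identity \eqref{eq:n}, so $((A,\D),S)$ is a \N pre-Lie coalgebra precisely when $((A^*,\D^*),S^*)$ is a \N pre-Lie algebra.

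It then remains to match the three remaining pairs. The fourth condition of Definition \ref{de:mpnlie}, namely that the sextuple forms a matched pair of the underlying pre-Lie algebras $(A,\ci)$ and $(A^*,\cdot_\ci)$ (Eqs.~\eqref{eq:mplie1}--\eqref{eq:mplie4}), is by \cite[Theorem 3.5]{Bai2} equivalent to $(A,\ci,\D)$ being a pre-Lie bialgebra, that is, condition \eqref{it:nliebialg3}. The second condition of Definition \ref{de:mpnlie}, that $\big((A^*,L^*-R^*,-R^*),S^*\big)$ is a representation of $((A,\ci),N)$, is by Corollary \ref{cor:lem:dualrep} equivalent to the $S$-admissibility equations \eqref{eq:dual3-1} and \eqref{eq:dual2-1}, that is, condition \eqref{it:nliebialg4}. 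Finally, the third condition of Definition \ref{de:mpnlie}, that $\big((A,\mathbb{L}^*-\mathbb{R}^*,-\mathbb{R}^*),N\big)$ is a representation of $((A^*,\cdot_\ci),S^*)$, is obtained by applying Corollary \ref{cor:lem:dualrep} to the \N pre-Lie algebra $((A^*,\cdot_\ci),S^*)$ with the auxiliary map $N^*$ playing the role of the tested operator: under the canonical identifications $A^{**}=A$ and $N^{**}=N$, this representation condition is equivalent to the $N^*$-admissibility of $((A^*,\cdot_\ci),S^*)$, which I would then dualize to recover exactly Eqs.~\eqref{eq:nliebialg1} and \eqref{eq:nliebialg2}, that is, condition \eqref{it:nliebialg5}. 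Combining the three matchings with the structural ones yields the equivalence.

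The step I expect to be the main obstacle is this last one. Corollary \ref{cor:lem:dualrep} is stated for the regular representation of $(A,\ci)$ and the operator pair $(N,S)$, whereas here it must be invoked for the dual algebra $(A^*,\cdot_\ci)$ with product $\cdot_\ci=\D^*$ and with the roles of the Nijenhuis operator and the auxiliary map interchanged, so that $S^*$ is the Nijenhuis operator and $N^*$ the tested map. Transposing the resulting $N^*$-admissibility identities through the pairing $\langle\xi\cdot_\ci\eta,x\rangle=\langle\xi,x_{(1)}\rangle\langle\eta,x_{(2)}\rangle$ together with $\langle N^*\xi,x\rangle=\langle\xi,N(x)\rangle$ and $\langle S^*\xi,x\rangle=\langle\xi,S(x)\rangle$ is the delicate bookkeeping; once one checks that these transposes reproduce Eqs.~\eqref{eq:nliebialg1}--\eqref{eq:nliebialg2} verbatim, the proof closes.
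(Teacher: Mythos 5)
Your proposal is correct and takes essentially the same route as the paper: the paper likewise reduces the statement to Bai's equivalence between $(A,\ci,\D)$ being a pre-Lie bialgebra and the sextuple of underlying pre-Lie algebras with the dual regular representations being a matched pair (citing \cite[Proposition 4.2]{Bai2}, which is the result you actually need, rather than Theorem 3.5 of that paper, which only gives the semidirect-sum construction), and then identifies the two remaining representation conditions of Definition \ref{de:mpnlie} with the $S$-admissibility and $N^*$-admissibility conditions of Definition \ref{de:nliebialg}, exactly as you do via Corollary \ref{cor:lem:dualrep}. The dualization you single out as the delicate step is in fact already built into the paper's Definition \ref{de:nliebialg}, whose condition (e) is stated as $N^*$-admissibility of $((A^*,\D^*),S^*)$ and asserted there to be equivalent to Eqs.~\eqref{eq:nliebialg1}--\eqref{eq:nliebialg2}, so your bookkeeping closes the argument in the same way the paper's terse proof does.
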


 \begin{proof} According to \cite[Proposition 4.2]{Bai2}, we find that $((A, \ci), \D)$ is a pre-Lie bialgebra if and only if $((A, \ci)$, $(A^*, \cdot_\ci), L^*-R^*, -R^*, \mathbb{L}^*-\mathbb{R}^*, -\mathbb{R}^*)$ is a matched pair of pre-Lie algebras $(A, \ci)$ and $(A^*, \cdot_\ci)$. The rest of the argument is obtained by Items (\ref{it:nliebialg4}) and (\ref{it:nliebialg5}) in Definition \ref{de:nliebialg}.
 \end{proof}

\subsection{Enrichment of \sss-equations}\label{se:cybe}
 Let $((A, \ci), N)$ be an $S$-admissible Nijenhuis pre-Lie algebra, $r$ be a symmetric solution of the \sss-equation in $(A, \ci)$. Then by Proposition \ref{pro:qt}, we obtain a quasitriangular pre-Lie bialgebra $(A, \ci, r, \D_r)$, where $\D_r$ is given by Eq.\,(\ref{eq:cop}). Thus in this case, by Definition \ref{de:nliebialg}, $((A, \ci), \D_r, N, S)$ is a \N pre-Lie bialgebra if and only if Eqs.~(\ref{eq:nc}), (\ref{eq:nliebialg1}) and (\ref{eq:nliebialg2}) hold. We will give the equivalent descriptions of these three conditions in terms of $r$.

 \begin{lem} \label{lem:pq} Let $((A, \ci), N)$ be an $S$-admissible Nijenhuis pre-Lie algebra and $\D:=\D_r$ given by Eq.\,(\ref{eq:cop}). Then
 \begin{enumerate}
   \item \label{it:pq1} Eq.~\eqref{eq:nc} holds  if and only if, for all $x\in  A$,
    \begin{eqnarray}
     &&(\id\o S\circ R_x-\id\o R_{S(x)}-\id\o S\circ L_x+\id\o L_{S(x)})((S\o \id-\id\o N)(r)) \label{eq:nc-1}\\
     &&\hspace{45mm}+(S\circ L_x\o \id-L_{S(x)}\o \id)((N\o \id-\id\o S)(r))=0.\nonumber
    \end{eqnarray}
   \item \label{it:pq3} Eq.~\eqref{eq:nliebialg1} holds if and only if, for all $x\in A$,
    \begin{eqnarray}
     &&(\id\o L_{N(x)}-\id\o R_{N(x)}+L_{N(x)}\o \id-\id\o S\circ R_x+\id\o S\circ L_x-N\circ L_x\o \id)\label{eq:nliebialg1-1}\\
     &&\hspace{15mm}((N\o \id-\id\o S)(r))=(\id\o L_x\circ S^2-N^2\o L_x+N^2\o R_x-\id\o R_x\circ S^2)(r).\nonumber
    \end{eqnarray}
   \item \label{it:pq2} Eq.~\eqref{eq:nliebialg2} holds if and only if, for all $x\in A$,
    \begin{eqnarray}
     &&(L_{N(x)}\o \id+\id\o L_{N(x)}-\id\o R_{N(x)}+S\circ L_x\o \id-\id\o N\circ L_x+\id\o N\circ R_x)\label{eq:nliebialg2-1}\\
     &&\hspace{45mm}((S\o \id-\id\o N)(r))=(L_x\circ S^2\o \id-L_x\o N^2)(r).\nonumber
    \end{eqnarray}
 \end{enumerate}
 \end{lem}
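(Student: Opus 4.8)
The plan is to convert each of the three comultiplicative identities into an identity among linear operators acting on the fixed tensor $r$, and then to read the $S$-admissibility relations \eqref{eq:dual3-1} and \eqref{eq:dual2-1} (and, for the last two parts, the Nijenhuis relation \eqref{eq:n}) as operator identities that transform one side into the other. The whole computation rests on the observation that \eqref{eq:cop} can be written operator-theoretically as
\[
\D_r(y)=\big(L_y\o\id+\id\o L_y-\id\o R_y\big)(r),\qquad y\in A,
\]
since $y\ci a=L_y a$ and $a\ci y=R_y a$. Applying this to $\D_r$ evaluated at $S(x)$, $S^2(x)$, $N(x)$, $N^2(x)$ and then post-composing with the external maps $S\o S$, $N\o\id$, etc., expresses every summand of \eqref{eq:nc}, \eqref{eq:nliebialg1}, \eqref{eq:nliebialg2} as a single operator applied to $r$.

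For the first part I would substitute this formula into the four summands of \eqref{eq:nc} and collect the resulting twelve terms according to their tensor shape, namely the four patterns $(\,\cdot\,)\o\id$, $\id\o(\,\cdot\,)$, $S\o(\,\cdot\,)$ and $(\,\cdot\,)\o S$. This rewrites \eqref{eq:nc} as the vanishing of
\[
(L_{S^2(x)}-SL_{S(x)})\o\id\,(r)+\id\o(L_{S^2(x)}-R_{S^2(x)}-SL_{S(x)}+SR_{S(x)})\,(r)
\]
\[
+\,S\o(SL_x-SR_x-L_{S(x)}+R_{S(x)})\,(r)+(SL_x-L_{S(x)})\o S\,(r).
\]
Expanding \eqref{eq:nc-1} the same way, the $S\o(\,\cdot\,)$ and $(\,\cdot\,)\o S$ blocks already coincide up to an overall sign with no further input. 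The two remaining blocks are reconciled by reading the admissibility relations on complementary tensor slots: reading \eqref{eq:dual2-1} as an operator identity in its second argument (with the first fixed to $x$) gives the left-multiplication relation $L_{S^2(x)}-SL_{S(x)}=L_{S(x)}N-SL_xN$, which handles the first tensor factor, while reading \eqref{eq:dual3-1} as an operator identity in its first argument gives the right-multiplication relation $R_{S^2(x)}-SR_{S(x)}=R_{S(x)}N-SR_xN$, which handles the right-multiplication terms sitting in the second tensor factor. Together these yield \eqref{eq:nc-1}.

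The remaining two parts follow the identical template applied to \eqref{eq:nliebialg1} and \eqref{eq:nliebialg2}: substitute the operator form of $\D_r$, sort the terms into the four tensor patterns, and simplify. The one extra ingredient is that these identities contain $N^2$-terms, which are absorbed using the defining Nijenhuis relation \eqref{eq:n}, read on each slot as $L_{N(x)}N+N^2L_x=NL_{N(x)}+NL_xN$ and its right-multiplication analogue, exactly as the $S^2$-terms are absorbed by the admissibility relations. I expect the principal difficulty to be organizational rather than conceptual: each of the three identities unpacks into on the order of a dozen operator terms, and the proof amounts to a disciplined matching in which each block is converted using the correct slot-reading of \eqref{eq:dual3-1}, \eqref{eq:dual2-1} or \eqref{eq:n}. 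It is worth noting that the symmetry of $r$ is nowhere needed for these equivalences; it intervenes only in the ambient hypotheses guaranteeing that $\D_r$ is a genuine pre-Lie coalgebra comultiplication.
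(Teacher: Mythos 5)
Your part (\ref{it:pq1}) argument is correct and is, modulo notation, the paper's own proof: the paper expands the four terms of \eqref{eq:nc} on the components of $r$ via \eqref{eq:cop} and rewrites the nested terms $S(S(x)\circ a_i)$, $S(S(x)\circ b_i)$, $S(b_i\circ S(x))$ using \eqref{eq:dual2-1} and \eqref{eq:dual3-1}; your sorting into four tensor blocks together with the two slot-readings $L_{S^2(x)}-SL_{S(x)}=L_{S(x)}N-SL_xN$ and $R_{S^2(x)}-SR_{S(x)}=R_{S(x)}N-SR_xN$ is exactly that computation, and your closing remark that the symmetry of $r$ is never used is also accurate.

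The genuine gap is that parts (\ref{it:pq3}) and (\ref{it:pq2}) --- two thirds of the lemma --- are not carried out but only asserted to follow by ``the identical template'', and for part (\ref{it:pq3}) this assertion fails. Execute your template on \eqref{eq:nliebialg1}: after applying \eqref{eq:dual3-1}, \eqref{eq:dual2-1} to $S(N(x)\circ b_i)$ and $S(b_i\circ N(x))$, applying \eqref{eq:n} to $N(N(x)\circ a_i)$, and cancelling the two copies of $N^2(x\circ a_i)\otimes b_i$, sixteen terms survive; comparing them with the expansion of the displayed identity in (\ref{it:pq3}), the twelve terms produced by its left-hand side occur with the opposite sign, while the four terms produced by its right-hand side ($N^2\otimes L_x$, $N^2\otimes R_x$, $\mathrm{id}\otimes L_xS^2$, $\mathrm{id}\otimes R_xS^2$) occur with the same sign. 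The blocks are therefore not proportional, and what the matching actually proves is that \eqref{eq:nliebialg1} is equivalent to that identity with its right-hand side negated. The printed version is genuinely inequivalent to \eqref{eq:nliebialg1}: take the two-dimensional pre-Lie algebra with $e\circ e=e\circ f=0$, $f\circ e=-e$, $f\circ f=f$, let $r=e\otimes e$ (symmetric, and a solution of the $\mathbb{S}$-equation), $N=\mathrm{id}_A$, $S=0$, so that every hypothesis of the lemma holds; then \eqref{eq:nliebialg1} holds identically, while the identity in (\ref{it:pq3}) at $x=f$ reduces to $-e\otimes e=e\otimes e$. A faithful run of your plan would thus have forced you to detect and repair a sign --- the slip originates in the paper itself, whose proof likewise stops at the expansions and simply asserts the equivalence --- rather than confirm the statement as printed. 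Part (\ref{it:pq2}), by contrast, does close exactly as stated (all fourteen surviving terms match with the same sign), but even there your sketch glosses over structural differences from part (\ref{it:pq1}): five tensor patterns rather than four, the cancellation of $N^2$-terms between $x_{(1)}\otimes N^2(x_{(2)})$ and $N(x)_{(1)}\otimes N(N(x)_{(2)})$, and the need for the right-slot reading of \eqref{eq:n}.
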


 \begin{proof} (\ref{it:pq1}) For all $x\in  A$, by Eq.\,(\ref{eq:cop}), we have
 \begin{eqnarray*}
 S(x_{(1)})\o S(x_{(2)})\hspace{-3mm}&\stackrel{}{=}&\hspace{-3mm}\sum\limits_{i=1}\limits^n \big(S(x\ci a_i)\o S(b_i)+S(a_i)\o S(x\ci b_i)-S(a_i)\o S(b_i\ci x)\big),\\
 S^2(x)_{(1)}\o S^2(x)_{(2)}\hspace{-3mm}&\stackrel{}{=}&\hspace{-3mm}\sum\limits_{i=1}\limits^n \big(S^2(x)\ci a_i\o b_i+a_i\o S^2(x)\ci b_i-a_i\o b_i\ci S^2(x)\big),\\
 S(S(x)_{(1)})\o S(x)_{(2)}\hspace{-3mm}&\stackrel{}{=}&\hspace{-3mm}\sum\limits_{i=1}\limits^n \big(S(S(x)\ci a_i)\o b_i+S(a_i)\o S(x)\ci b_i-S(a_i)\o b_i\ci S(x)\big)\\
 \hspace{-3mm}&\stackrel{(\ref{eq:dual2-1})}{=}&\hspace{-3mm}\sum\limits_{i=1}\limits^n \big(S(x\ci N(a_i))\o b_i+S^2(x)\ci a_i\o b_i-S(x)\ci N(a_i)\o b_i\\
 &&\hspace{-3mm}+S(a_i)\o S(x)\ci b_i-S(a_i)\o b_i\ci S(x)\big),\\
 S(x)_{(1)}\o S(S(x)_{(2)})\hspace{-3mm}&\stackrel{}{=}&\hspace{-3mm}\sum\limits_{i=1}\limits^n \big(S(x)\ci a_i\o S(b_i)+a_i\o S(S(x)\ci b_i)-a_i\o S(b_i\ci S(x))\big)\\
 \hspace{-3mm}&\stackrel{(\ref{eq:dual3-1})(\ref{eq:dual2-1})}{=}&\hspace{-3mm}\sum\limits_{i=1}\limits^n \big(S(x)\ci a_i\o S(b_i)+a_i\o S(x\ci N(b_i))+a_i\o S^2(x)\ci b_i\\
 &&\hspace{-3mm}-a_i\o S(x)\ci N(b_i)+a_i\o N(b_i)\ci S(x)-a_i\o S(N(b_i)\ci x)\\
 &&\hspace{-3mm}-a_i\o b_i\ci S^2(x)\big).
 \end{eqnarray*}
Then Eq.\,(\ref{eq:nc}) $\Leftrightarrow$ Eq.\,(\ref{eq:nc-1}).
 \smallskip

 (\ref{it:pq3}) For all $x\in  A$, by Eq.\,(\ref{eq:p}), one can obtain
 \begin{eqnarray*}
 N(x)_{(1)}\o S(N(x)_{(2)})\hspace{-3mm}&=&\hspace{-3mm}\sum\limits_{i=1}\limits^n \big(N(x)\ci a_i\o S(b_i)+a_i\o S(N(x)\ci b_i)-a_i\o S(b_i\ci N(x))\big)\\
 \hspace{-3mm}&\stackrel{(\ref{eq:dual3-1})(\ref{eq:dual2-1})}{=}&\hspace{-3mm}\sum\limits_{i=1}\limits^n \big(N(x)\ci a_i\o S(b_i)+a_i\o N(x)\ci S(b_i)+a_i\o S(x\ci S(b_i))\\
 &&\hspace{-3mm}-a_i\o x\ci S^2(b_i)+a_i\o S^2(b_i)\ci x-a_i\o S(b_i)\ci N(x)\\
 &&\hspace{-3mm}-a_i\o S(S(b_i)\ci x)\big),\\
 N^2(x_{(1)})\o x_{(2)}\hspace{-3mm}&=&\hspace{-3mm}\sum\limits_{i=1}\limits^n \big(N^2(x\ci a_i)\o b_i+N^2(a_i)\o x\ci b_i-N^2(a_i)\o b_i\ci x\big),\\
 N(x_{(1)})\o S(x_{(2)})\hspace{-3mm}&=&\hspace{-3mm}\sum\limits_{i=1}\limits^n \big(N(x\ci a_i)\o S(b_i)+N(a_i)\o S(x\ci b_i)-N(a_i)\o S(b_i\ci x)\big),\\
 N(N(x)_{(1)})\o N(x)_{(2)}\hspace{-3mm}&=&\hspace{-3mm}\sum\limits_{i=1}\limits^n \big(N(N(x)\ci a_i)\o b_i+N(a_i)\o N(x)\ci b_i-N(a_i)\o b_i\ci N(x)\big)\\
 \hspace{-3mm}&\stackrel{(\ref{eq:n})}{=}&\hspace{-3mm}\sum\limits_{i=1}\limits^n \big(N(x)\ci N(a_i)\o b_i+N^2(x\ci a_i)\o b_i-N(x\ci N(a_i))\o b_i\\
 &&\hspace{-3mm}+N(a_i)\o N(x)\ci b_i-N(a_i)\o b_i\ci N(x)\big).
 \end{eqnarray*}
Then Eq.\,(\ref{eq:nliebialg1}) $\Leftrightarrow$ Eq.\,(\ref{eq:nliebialg1-1}).
\smallskip

(\ref{it:pq2}) For all $x\in  A$, by Eq.\,(\ref{eq:p}), one can obtain
\begin{eqnarray*}
S(N(x)_{(1)})\o N(x)_{(2)}\hspace{-3mm}&=&\hspace{-3mm}\sum\limits_{i=1}\limits^n \big(S(N(x)\ci a_i)\o b_i+S(a_i)\o N(x)\ci b_i-S(a_i)\o b_i\ci N(x)\big)\\
 \hspace{-3mm}&\stackrel{(\ref{eq:dual3-1})}{=}&\hspace{-3mm}\sum\limits_{i=1}\limits^n \big(N(x)\ci S(a_i)\o b_i+S(x\ci S(a_i))\o b_i-x\ci S^2(a_i)\o b_i\\
 &&\hspace{-3mm}+S(a_i)\o N(x)\ci b_i-S(a_i)\o b_i\ci N(x)\big),\\
 x_{(1)}\o N^2(x_{(2)})\hspace{-3mm}&=&\hspace{-3mm}\sum\limits_{i=1}\limits^n \big(x\ci a_i\o N^2(b_i)+a_i\o N^2(x\ci b_i)-a_i\o N^2(b_i\ci x)\big),\\
 S(x_{(1)})\o N(x_{(2)})\hspace{-3mm}&=&\hspace{-3mm}\sum\limits_{i=1}\limits^n \big(S(x\ci a_i)\o N(b_i)+S(a_i)\o N(x\ci b_i)-S(a_i)\o N(b_i\ci x)\big),\\
 N(x)_{(1)}\o N(N(x)_{(2)})\hspace{-3mm}&=&\hspace{-3mm}\sum\limits_{i=1}\limits^n \big(N(x)\ci a_i\o N(b_i)+a_i\o N(N(x)\ci b_i)-a_i\o N(b_i\ci N(x))\big)\\
 \hspace{-3mm}&\stackrel{(\ref{eq:n})}{=}&\hspace{-3mm}\sum\limits_{i=1}\limits^n \big(N(x)\ci a_i\o N(b_i)+a_i\o N(x)\ci N(b_i)+a_i\o N^2(x\ci b_i)\\
 &&\hspace{-3mm}-a_i\o N(x\ci N(b_i))+a_i\o N(N(b_i)\ci x)-a_i\o N(b_i)\ci N(x)\\
 &&\hspace{-3mm}-a_i\o N^2(b_i\ci x)\big).
 \end{eqnarray*}
Then Eq.\,(\ref{eq:nliebialg2}) $\Leftrightarrow$ Eq.\,(\ref{eq:nliebialg2-1}). These complete the proof.
 \end{proof}

 \begin{rmk} \label{rmk:thm:pq}
 \begin{enumerate}
   \item If $\sum\limits_{i=1}\limits^n \big(S(a_i)\o b_i\big)=\sum\limits_{i=1}\limits^n \big(a_i\o N(b_i)\big)$, then $\sum\limits_{i=1}\limits^n \big(S^2(a_i)\o b_i\big)=\sum\limits_{i=1}\limits^n \big(a_i\o N^2(b_i)\big)$.
   \item If $\sum\limits_{i=1}\limits^n \big(N(a_i)\o b_i\big)=\sum\limits_{i=1}\limits^n \big(a_i\o S(b_i)\big)$, then $\sum\limits_{i=1}\limits^n \big(N^2(a_i)\o b_i\big)=\sum\limits_{i=1}\limits^n \big(a_i\o S^2(b_i)\big)$.
   \item If $r$ is symmetric, then $\sum\limits_{i=1}\limits^n \big(S(a_i)\o b_i\big)=\sum\limits_{i=1}\limits^n \big(a_i\o N(b_i)\big)$ is equivalent to $\sum\limits_{i=1}\limits^n \big(N(a_i)\o b_i\big)=\sum\limits_{i=1}\limits^n \big(a_i\o S(b_i)\big)$.
 \end{enumerate}
 \end{rmk}

 By Lemma \ref{lem:pq} and Remark \ref{rmk:thm:pq}, we obtain

 \begin{thm}\label{thm:nqt} Let $((A, \ci), N)$ be an $S$-admissible Nijenhuis pre-Lie algebra and $\D:=\D_r$ given by Eq.\,(\ref{eq:cop}). If $r$ is a symmetric solution of the \sss-equation in $(A, \ci)$ such that
 \begin{eqnarray}
 &(S\o \id-\id\o N)(r)=0,& \label{eq:cybe-1}
 \end{eqnarray}
 then $((A, \ci), \D_r, N, S)$ is a \N pre-Lie bialgebra. In this case, we call $((A, \ci), \D_r, N, S)$ {\bf quasitriangular}.
 \end{thm}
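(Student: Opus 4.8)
The plan is to check the five conditions of Definition~\ref{de:nliebialg} for the quintuple $(A, \ci, \D_r, N, S)$. Two of them are immediate: condition~(\ref{it:nliebialg1}) and condition~(\ref{it:nliebialg4}) hold because $((A, \ci), N)$ is assumed to be an $S$-admissible Nijenhuis pre-Lie algebra. Condition~(\ref{it:nliebialg3}), that $(A, \ci, \D_r)$ is a pre-Lie bialgebra, is exactly Proposition~\ref{pro:qt}, since $r$ is a symmetric solution of the \sss-equation; indeed $(A, \ci, r, \D_r)$ is quasitriangular. Hence everything reduces to conditions~(\ref{it:nliebialg2}) and~(\ref{it:nliebialg5}), that is, to Eqs.~\eqref{eq:nc}, \eqref{eq:nliebialg1} and~\eqref{eq:nliebialg2}, precisely the three identities singled out in the discussion preceding Lemma~\ref{lem:pq}.

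Rather than expand these directly, I would feed them into Lemma~\ref{lem:pq}, which (using $S$-admissibility) rewrites them as the operator identities~\eqref{eq:nc-1}, \eqref{eq:nliebialg1-1} and~\eqref{eq:nliebialg2-1}. The hypothesis $(S\o\id-\id\o N)(r)=0$ is designed to annihilate the left-hand sides of all three. Each left-hand side is a composite operator applied to an inner factor that is either $(S\o\id-\id\o N)(r)$ or $(N\o\id-\id\o S)(r)$: the factor in the first summand of~\eqref{eq:nc-1} and in~\eqref{eq:nliebialg2-1} is $(S\o\id-\id\o N)(r)$, which vanishes by assumption; the factor in the second summand of~\eqref{eq:nc-1} and in~\eqref{eq:nliebialg1-1} is $(N\o\id-\id\o S)(r)$, which also vanishes because $r$ is symmetric, by Remark~\ref{rmk:thm:pq}. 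Thus all three left-hand sides are zero.

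It remains to show that the right-hand sides of~\eqref{eq:nliebialg1-1} and~\eqref{eq:nliebialg2-1} vanish as well (identity~\eqref{eq:nc-1} has none). For this I would first upgrade the two first-order transport identities to their squares via Remark~\ref{rmk:thm:pq}, obtaining $(S^2\o\id)(r)=(\id\o N^2)(r)$ and $(N^2\o\id)(r)=(\id\o S^2)(r)$. Applying $\id\o L_x$ and $\id\o R_x$ to the identity $(N^2\o\id-\id\o S^2)(r)=0$ collapses the two term-pairs on the right-hand side of~\eqref{eq:nliebialg1-1} (namely $\id\o L_x\circ S^2$ against $N^2\o L_x$, and $N^2\o R_x$ against $\id\o R_x\circ S^2$); applying $L_x\o\id$ to $(S^2\o\id-\id\o N^2)(r)=0$ collapses the right-hand side of~\eqref{eq:nliebialg2-1}. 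With both sides of all three operator identities equal to zero, Lemma~\ref{lem:pq} delivers Eqs.~\eqref{eq:nc}, \eqref{eq:nliebialg1} and~\eqref{eq:nliebialg2}, completing the verification of Definition~\ref{de:nliebialg}.

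The substance of the argument is not any individual computation but the bookkeeping: the proof runs entirely on the four transport identities $(S\o\id-\id\o N)(r)=0$, $(N\o\id-\id\o S)(r)=0$, $(S^2\o\id-\id\o N^2)(r)=0$ and $(N^2\o\id-\id\o S^2)(r)=0$, and on matching each surviving summand of~\eqref{eq:nliebialg1-1}--\eqref{eq:nliebialg2-1} against the correct one of them. I expect this matching of tensor-leg powers of $N$ and $S$, rather than the role of the \sss-equation itself (already absorbed into Proposition~\ref{pro:qt} and Lemma~\ref{lem:pq}), to be the step that demands the most care.
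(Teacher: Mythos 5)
Your proposal is correct and takes essentially the same route as the paper: reduce via Proposition~\ref{pro:qt} and Definition~\ref{de:nliebialg} to Eqs.~\eqref{eq:nc}, \eqref{eq:nliebialg1} and \eqref{eq:nliebialg2}, then dispose of these through Lemma~\ref{lem:pq} and Remark~\ref{rmk:thm:pq}. The paper's own proof is exactly this citation of Lemma~\ref{lem:pq} and Remark~\ref{rmk:thm:pq} stated in one line; your write-up simply makes explicit the bookkeeping (which tensor factor each operator identity is applied to, and why both sides vanish) that the paper leaves to the reader.
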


 \begin{rmk} Let $((A, \ci), N)$ be a Nijenhuis pre-Lie algebra and $S:  A\lr  A$ a linear map. A quasitriangular \N pre-Lie bialgebra $((A, \ci), \D_r, N, S)$ is a quasitriangular pre-Lie bialgebra $((A, \ci), \D_r)$ satisfying Eqs.\,(\ref{eq:dual3-1}), (\ref{eq:dual2-1}) and (\ref{eq:cybe-1}).
 \end{rmk}

 \begin{defi} \label{de:cybe-1} Let $((A, \ci), N)$ be a Nijenhuis pre-Lie algebra. Then the \sss-equation (i.e. Eq.\,(\ref{eq:cybe})) together with Eq.\,(\ref{eq:cybe-1}) is called the {\bf $S$-Nijenhuis \sss-equation}.
 \end{defi}

 \begin{rmk}
 \begin{enumerate}
\item Let $((A, \ci), N)$ be an $S$-admissible Nijenhuis pre-Lie algebra. If $r$ is a symmetric solution of the $S$-Nijenhuis \sss-equation, then we obtain a \N pre-Lie bialgebra by Theorem \ref{thm:nqt}.
\item Let $((A, \ci), N)$ be a Nijenhuis pre-Lie algebra, $r\in A\o A$ be a symmetric element and $\sigma: A\o A\lr A$ be the flip map. applying $\id_A\o \sigma$ to the \sss-equation and by $r=\sigma(r)$, we obtain
    \begin{equation}\label{eq:cybe-2}
    \sum^n_{i,j=1} (a_i\o a_j\o b_i\ci b_j+a_i\o b_i\ci a_j\o b_j)
     =\sum^n_{i,j=1} (a_i\ci a_j\o b_j\o b_i+a_i\o a_j\ci b_i\o b_j).
    \end{equation}
   Thus Eq.\,(\ref{eq:cybe-2}) is equivalent to the \sss-equation.
 \end{enumerate}
 \end{rmk}

\begin{thm} \label{thm:rr} Let $((A, \ci), N)$ be a Nijenhuis pre-Lie algebra, $r$ a symmetric element in $ A\o  A$ and $S: A\lr  A$ a linear map. Then $r$ is a solution of the $S$-Nijenhuis \sss-equation if and only if for all $a^*, b^*\in  A^*$, $\rr$ satisfies
 \begin{eqnarray}\label{eq:rr-1}
 &\rr(a^*)\ci \rr(b^*)=\rr\big((L^*-R^*)(\rr(a^*))(b^*)-R^*(\rr(b^*))(a^*)\big)&
 \end{eqnarray}
 and
 \begin{eqnarray}\label{eq:rr-2}
 N(\rr(a^*))=\rr(S^*(a^*)).
 \end{eqnarray}
 \end{thm}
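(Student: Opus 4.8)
The plan is to separate both the hypothesis and the conclusion into two independent pieces. By Definition~\ref{de:cybe-1}, $r$ solves the $S$-Nijenhuis \sss-equation precisely when it satisfies the \sss-equation~\eqref{eq:cybe} together with the compatibility condition~\eqref{eq:cybe-1}. Accordingly I would prove the theorem by establishing two separate equivalences: that the \sss-equation~\eqref{eq:cybe} is equivalent to the operator identity~\eqref{eq:rr-1}, and that Eq.~\eqref{eq:cybe-1} is equivalent to Eq.~\eqref{eq:rr-2}. Note that~\eqref{eq:rr-1} is exactly the statement that $\rr$ is an $\mathcal{O}$-operator with respect to the representation $(A^*, L^*-R^*, -R^*)$, so the first equivalence is the pre-Lie analogue of the correspondence between symmetric solutions of the \sss-equation and $\mathcal{O}$-operators, cf.\ \cite{Bai2}.

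The second equivalence is a direct dualization and is the easy part. Writing $r=\sum_i a_i\o b_i$, the definition~\eqref{eq:rr} of $\rr$ gives $N(\rr(a^*))=\sum_i \langle a^*, a_i\rangle N(b_i)$ and $\rr(S^*(a^*))=\sum_i \langle a^*, S(a_i)\rangle b_i$ for every $a^*\in A^*$. The first is the image of $\sum_i a_i\o N(b_i)$ under $a^*\o \id$, and the second is the image of $\sum_i S(a_i)\o b_i$ under $a^*\o \id$. Since the pairing between $A$ and $A^*$ is nondegenerate, requiring these to agree for all $a^*$ is exactly the identity $(\id\o N)(r)=(S\o \id)(r)$, which is Eq.~\eqref{eq:cybe-1}. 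This yields Eq.~\eqref{eq:cybe-1} $\Leftrightarrow$ Eq.~\eqref{eq:rr-2}.

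For the first equivalence I would contract the \sss-equation~\eqref{eq:cybe}, an identity in $A\o A\o A$, against $a^*$ in the first tensor slot and $b^*$ in the third, leaving an identity in $A$ carried by the middle slot. Using the symmetry of $r$ to rewrite the third-slot contraction, the term $a_i\o(b_i\ci a_j)\o b_j$ collapses to $\rr(a^*)\ci \rr(b^*)$, producing the left-hand side of Eq.~\eqref{eq:rr-1}. The remaining three contracted terms must then be reassembled into $\rr\big((L^*-R^*)(\rr(a^*))(b^*)-R^*(\rr(b^*))(a^*)\big)$: expanding $\rr$ of this argument via $\langle \rho^*(x)v^*,v\rangle=-\langle v^*,\rho(x)v\rangle$ from Lemma~\ref{lem:dualrep} produces exactly three terms, and one matches them to the three contracted terms one at a time, each identification again invoking the symmetry $\sum_i a_i\o b_i=\sum_i b_i\o a_i$ in the appropriate slot. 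Because $a^*,b^*$ range over all of $A^*$ and contracting the first and third slots against all functionals is injective on $A\o A\o A$, the paired identity holds for all $a^*,b^*$ if and only if the full \sss-equation holds; since every intermediate symmetry relabeling is a genuine equality, the argument is reversible and the equivalence holds in both directions.

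The main obstacle is the index bookkeeping in this last step. One must contract exactly the first and third slots: pairing the first two slots instead would yield the antisymmetrized Lie-bracket form $\rr(a^*)\ci\rr(b^*)-\rr(b^*)\ci\rr(a^*)$ rather than Eq.~\eqref{eq:rr-1}. Having fixed the correct pairing, one then has to apply the symmetry of $r$ in the right slot for each of the three residual terms so that they align termwise with the three pieces of $\rr(\phi)$, where $\phi=(L^*-R^*)(\rr(a^*))(b^*)-R^*(\rr(b^*))(a^*)$. Once the correct pairing and the three symmetry relabelings are arranged, the computation is routine. I would finally combine the two equivalences to conclude that $r$ solves the $S$-Nijenhuis \sss-equation if and only if both Eq.~\eqref{eq:rr-1} and Eq.~\eqref{eq:rr-2} hold.
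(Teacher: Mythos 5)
Your proposal is correct and takes essentially the same route as the paper: the paper also splits the statement into the two separate equivalences, proves Eq.~\eqref{eq:cybe-1} $\Leftrightarrow$ Eq.~\eqref{eq:rr-2} by the identical pairing computation, and establishes the \sss-equation $\Leftrightarrow$ Eq.~\eqref{eq:rr-1} by the same duality-plus-symmetry-of-$r$ calculation, merely run in the opposite direction (it expands Eq.~\eqref{eq:rr-1} into the flipped tensor identity Eq.~\eqref{eq:cybe-2}, which is precisely your contraction of the first and third slots read in reverse). The only cosmetic differences are this direction of computation and the paper's added citation of \cite{Bai2} and \cite{WSBL} for the first equivalence.
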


 \begin{proof} By \cite[Theorem 6.6]{Bai2} or \cite[Proposition 2.6]{WSBL}, we know that $r$ is a solution of the \sss-equation if and only if Eq.\,(\ref{eq:rr-1}) holds. In detail, since $r$ is symmetric, for all $a^*, b^*\in  A^*$, we have
 \begin{eqnarray*}
 Eq.\,(\ref{eq:rr-1})
 &\stackrel{(\ref{eq:rr})}{\Leftrightarrow}&\sum^n_{i,j=1} (\langle a^*, a_i\rangle\langle b^*, a_j\rangle b_i\ci b_j+\langle a^*, a_i\rangle\langle b^*, b_i\ci a_j\rangle b_j)\\
 &&\hspace{3mm}=\sum^n_{i,j=1} (\langle a^*, a_i\rangle\langle b^*, a_j\ci b_i\rangle b_j+\langle a^*, a_j\ci b_i\rangle\langle b^*, a_i\rangle b_j)\\
 &\stackrel{\forall~a^*,~~b^*}{\Leftrightarrow}&\sum^n_{i,j=1} (a_i\o a_j\o b_i\ci b_j+a_i\o b_i\ci a_j\o b_j)\\
 &&\hspace{3mm}=\sum^n_{i,j=1} (a_i\o a_j\ci b_i\o b_j+a_i\ci b_j\o a_j\o b_i)\\
 &\stackrel{}{\Leftrightarrow}& Eq.\,(\ref{eq:cybe-2}).
 \end{eqnarray*}
 Also, one gets
 \begin{eqnarray*}
 \rr(S^*(a^*))&=&\sum\limits_{i=1}\limits^n \big(\langle S^*(a^*), a_i \rangle b_i\big)=\sum\limits_{i=1}\limits^n \big(\langle a^*, S(a_i) \rangle b_i\big),\\
 S(\rr(a^*))&=&\sum\limits_{i=1}\limits^n \big(\langle a^*, a_i \rangle S(b_i)\big).
 \end{eqnarray*}
 So Eq.\,(\ref{eq:rr-2}) $\Leftrightarrow$ Eq.\,(\ref{eq:cybe-1}). This completes the proof.
 \end{proof}

Theorem \ref{thm:rr} motivate us to define the $\mathcal{O}$-operators on Nijenhuis pre-Lie algebra and investigate the related problems.

\subsection{$\mathcal{O}$-operator on Nijenhuis pre-Lie algebras}\label{se:o} Next we extract the properties of $r^{\sharp}$ in Theorem \ref{thm:rr} to a general notion.

 \begin{defi}\label{defi:4d1} Let $((A, \ci), N)$ be a Nijenhuis pre-Lie algebra, $(V, \rho, \vp)$ a representation of $(A, \ci)$ and $\a:V\lr V$ a linear map. A linear map $T:V\lr A$ is called a {\bf weak $\mathcal{O}$-operator associated to $(V, \rho, \vp)$ and $\a$} if $T$ satisfies
 \begin{eqnarray}
 &T(u)\ci T(v)=T\Big(\rho\big(T(u)\big)v+\vp\big(T(v)\big)u\Big),\forall~u, v\in V, &\label{eq:z4}\\
 &N T=T\a.&\label{eq:z5}
 \end{eqnarray}
 Moreover, if $(V, \rho, \vp, \a)$ is a representation of $((A, \ci), N)$, then $T$ is called an {\bf $\mathcal{O}$-operator associated to $(V, \rho,$ $\vp, \a)$}.
 \end{defi}

 \begin{ex}\label{ex:ex100} Let $((A, \ci), N)$ be a Nijenhuis pre-Lie algebra. Then the identity map $\id$ on $(A, \ci)$ is an $\mathcal{O}$-operator associated to $(A, L, 0, N)$ or $(A, 0, R, N)$.
 \end{ex}

 Theorem \ref{thm:rr} can be re-rewritten as follows in terms of $\mathcal{O}$-operators.

 \begin{cor}\label{cor:4c3} Let $((A, \ci), N)$ be a Nijenhuis pre-Lie algebra, $r\in A\o A$ symmetric and $S:A\lr A$ a linear map. Then $r$ is a solution of the $S$-Nijenhuis \sss-equation in $((A, \ci), N)$ if and only if $r^{\sharp}$ is a weak $\mathcal{O}$-operator associated to $(A^*, L^*-R^*, -R^* )$ and $S^*$. Moreover, if $((A, \ci), N)$ is an $S$-admissible Nijenhuis pre-Lie algebra, then $r$ is a solution of the $S$-Nijenhuis \sss-equation in $((A, \ci), N)$ if and only if $r^{\sharp}$ is an $\mathcal{O}$-operator associated to the representation $(A^*, L^*-R^*, -R^* , S^*)$.
 \end{cor}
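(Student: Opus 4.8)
The plan is to recognize the Corollary as a direct translation of Theorem~\ref{thm:rr} into the $\mathcal{O}$-operator language of Definition~\ref{defi:4d1}, so the proof is a matter of matching conventions rather than fresh computation. I would first recall that, by \cite[Proposition 3.3]{Bai2}, the triple $(A^*, L^*-R^*, -R^*)$ is always a representation of the pre-Lie algebra $(A, \ci)$, so the phrase ``weak $\mathcal{O}$-operator associated to $(A^*, L^*-R^*, -R^*)$ and $S^*$'' is meaningful with no extra hypothesis on $((A, \ci), N)$. I would then specialize Definition~\ref{defi:4d1} to $V = A^*$, $\rho = L^*-R^*$, $\vp = -R^*$, $T = \rr$ and $\a = S^*$. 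Under this specialization the defining identity~\eqref{eq:z4} becomes $\rr(a^*)\ci \rr(b^*) = \rr\big((L^*-R^*)(\rr(a^*))(b^*) - R^*(\rr(b^*))(a^*)\big)$, which is verbatim Eq.~\eqref{eq:rr-1}, and the compatibility~\eqref{eq:z5}, namely $N\rr = \rr S^*$, is verbatim Eq.~\eqref{eq:rr-2}.

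With these identifications in place, the first equivalence is immediate: Theorem~\ref{thm:rr} asserts that $r$ is a symmetric solution of the $S$-Nijenhuis \sss-equation if and only if both Eq.~\eqref{eq:rr-1} and Eq.~\eqref{eq:rr-2} hold, and by the previous paragraph this is exactly the statement that $\rr$ satisfies~\eqref{eq:z4} and~\eqref{eq:z5}, i.e.\ that $\rr$ is a weak $\mathcal{O}$-operator associated to $(A^*, L^*-R^*, -R^*)$ and $S^*$.

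For the ``Moreover'' clause I would add the dual-representation input. By Corollary~\ref{cor:lem:dualrep}, the quadruple $((A^*, L^*-R^*, -R^*), S^*)$ is a representation of the Nijenhuis pre-Lie algebra $((A, \ci), N)$ precisely when Eqs.~\eqref{eq:dual3-1} and~\eqref{eq:dual2-1} hold, and by Definition~\ref{de:admop} these two equations are exactly the $S$-admissibility of $((A, \ci), N)$. Hence, once $S$-admissibility is assumed, $(A^*, L^*-R^*, -R^*, S^*)$ is a genuine representation of the Nijenhuis pre-Lie algebra, and by the last sentence of Definition~\ref{defi:4d1} a weak $\mathcal{O}$-operator associated to such a representation is by definition an $\mathcal{O}$-operator. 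Combining this with the first equivalence gives that, under $S$-admissibility, $r$ solves the $S$-Nijenhuis \sss-equation if and only if $\rr$ is an $\mathcal{O}$-operator associated to $(A^*, L^*-R^*, -R^*, S^*)$.

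Because the result is purely a reformulation, I expect no genuine analytic difficulty; the only point requiring care is the bookkeeping when matching~\eqref{eq:z4}--\eqref{eq:z5} against~\eqref{eq:rr-1}--\eqref{eq:rr-2}, in particular the sign in the $-R^*$ entry and the order of the arguments $a^*, b^*$, together with the clean observation that $S$-admissibility is exactly the hypothesis that promotes the dual representation of $(A, \ci)$ to a representation of $((A, \ci), N)$.
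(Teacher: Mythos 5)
Your proposal is correct and is exactly the argument the paper intends: the paper offers no separate proof, introducing the corollary with the remark that Theorem~\ref{thm:rr} ``can be re-rewritten\dots in terms of $\mathcal{O}$-operators,'' and your identification of Eqs.~\eqref{eq:z4}--\eqref{eq:z5} with Eqs.~\eqref{eq:rr-1}--\eqref{eq:rr-2} under $V=A^*$, $\rho=L^*-R^*$, $\vp=-R^*$, $T=\rr$, $\a=S^*$ is precisely that rewriting. Your handling of the ``Moreover'' clause, via Corollary~\ref{cor:lem:dualrep} and Definition~\ref{de:admop} to promote the weak $\mathcal{O}$-operator to an $\mathcal{O}$-operator, likewise matches the paper's definitions, so nothing is missing.
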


In what follows, we will prove that $\mathcal{O}$-operators provide solutions to the $S$-Nijenhuis \sss-equation in semi-direct product Nijenhuis pre-Lie algebras.

 \begin{thm}\label{thm:ss1} Let $((A, \ci), N)$ be a Nijenhuis pre-Lie algebra, $(V, \rho, \vp)$ a representation of $(A, \ci)$, $S:A\lr A$ and $\a, \beta:V\lr V$ linear maps. Then the following conditions are equivalent:
 \begin{enumerate}[(1)]
 \item \label{it:ss1} There is a Nijenhuis pre-Lie algebra $(A\ltimes_{\rho, \vp} V, N+\a)$ such that the linear map $S+\beta$ on $A\oplus V$ is admissible to $(A\ltimes_{\rho, \vp} V, N+\a)$.
 \item \label{it:ss2} There is a Nijenhuis pre-Lie algebra $(A\ltimes_{\rho^*-\vp^*, -\vp^* } V^*, N+\beta^*)$ such that the linear map $S+\a^*$ on $A\oplus V^*$ is admissible to $(A\ltimes_{\rho^*-\vp^*, -\vp^* } V^*, N+\beta^*)$.
 \item \label{it:ss3} The following conditions are satisfied:
 \begin{enumerate}[(a)]
 \item \label{it:as1} $(V, \rho, \vp, \a)$ is a representation of $((A, \ci), N)$;
 \item \label{it:as2} $S$ is admissible to $((A, \ci), N)$;
 \item \label{it:as3} $\beta$ is admissible to $((A, \ci), N)$ on $(V, \rho, \vp)$;
 \item \label{it:as4} For all $x\in A$ and $v\in V$, the following equations hold:
 \begin{eqnarray}
 &&\beta\big(\rho(x)\a(v)\big)+\rho\big(S^2(x)\big)v=\rho\big(S(x)\big)\a(v)
 +\beta\big(\rho(S(x))v\big),\label{eq:hj} \\
 &&\beta\big(\vp(x)\a(v)\big)+\vp\big(S^2(x)\big)v=\vp\big(S(x)\big)\a(v)
 +\beta\big(\vp(S(x))v\big).\label{eq:hj2}
 \end{eqnarray}
 \end{enumerate}
 \end{enumerate}
 \end{thm}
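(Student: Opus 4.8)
The plan is to prove the equivalence $(1)\Leftrightarrow(3)$ by expanding the semi-direct product structure into its $A$- and $V$-components, and then to obtain $(2)\Leftrightarrow(3)$ almost for free by recognizing that condition $(2)$ is exactly condition $(1)$ applied to the dual representation, followed by a translation of the resulting conditions back to $(a)$--$(d)$ via Lemma \ref{lem:dualrep}.

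For $(1)\Leftrightarrow(3)$, first note that by Proposition \ref{pro:repnplie} the algebra $(A\ltimes_{\rho,\vp}V, N+\a)$ is a Nijenhuis pre-Lie algebra if and only if $((V,\rho,\vp),\a)$ is a representation of $((A,\ci),N)$, which is condition $(a)$. Granting this, I would write out the two admissibility equations \eqref{eq:dual3-1} and \eqref{eq:dual2-1} for $(A\ltimes_{\rho,\vp}V, N+\a)$ with the linear map $S+\beta$, evaluating them on $X=x+u$ and $Y=y+v$ via the product \eqref{eq:semip}. Each equation separates into its $A$-component and its $V$-component. The $A$-components reproduce \eqref{eq:dual3-1} and \eqref{eq:dual2-1} for $(A,\ci)$ itself, which is condition $(b)$. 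In each $V$-component the substitution $u=0$ isolates the terms linear in $v$, giving precisely \eqref{eq:dual3} and \eqref{eq:dual2}, i.e. condition $(c)$; the substitution $v=0$ isolates the terms linear in $u$, giving precisely \eqref{eq:hj2} and \eqref{eq:hj}, i.e. condition $(d)$. Reading the computation backwards gives the converse.

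For $(2)\Leftrightarrow(3)$, the observation is that condition $(2)$ is condition $(1)$ for the data $(V^*,\rho^*-\vp^*,-\vp^*)$ with Nijenhuis operator $\beta^*$ and admissibility map $\a^*$ (the map $S$ on $A$ being unchanged). Applying the already-proved equivalence $(1)\Leftrightarrow(3)$ to this data expresses $(2)$ as a conjunction of four conditions $(a')$--$(d')$, and it remains to identify each with $(a)$--$(d)$. Condition $(b')$ is literally $(b)$. By Lemma \ref{lem:dualrep}, condition $(a')$, that $((V^*,\rho^*-\vp^*,-\vp^*),\beta^*)$ is a representation of $((A,\ci),N)$, is exactly condition $(c)$. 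For $(c')$, that $\a^*$ is admissible with respect to $(V^*,\rho^*-\vp^*,-\vp^*)$, I would apply Lemma \ref{lem:dualrep} a second time and use the natural identification $V^{**}\cong V$, under which $\big((\rho^*-\vp^*)^*-(-\vp^*)^*,\,-(-\vp^*)^*\big)=(\rho,\vp)$ and $\a^{**}=\a$; hence $(c')$ becomes condition $(a)$.

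The main obstacle is the matching of the dual cross-terms $(d')$ with $(d)$. Pairing the dual versions of \eqref{eq:hj2} and \eqref{eq:hj} against elements of $V$ and using the conventions $\langle \rho^*(x)v^*,v\rangle=-\langle v^*,\rho(x)v\rangle$ and $\langle \vp^*(x)v^*,v\rangle=-\langle v^*,\vp(x)v\rangle$, one finds that $-\vp^*$ transposes to $\vp$ (the two signs cancel) while $\rho^*-\vp^*$ transposes to $\vp-\rho$. Consequently the dual of the $\vp'$-equation returns \eqref{eq:hj2} on the nose, whereas the dual of the $\rho'$-equation yields $(\vp-\rho)$-versions, so that \eqref{eq:hj} is recovered only after subtracting \eqref{eq:hj2}; thus the two dual cross-equations must be combined to produce the pair $(d)$. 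Tracking these signs and the double-dual identification (which tacitly uses $\dim V<\infty$) is the only delicate point, everything else being the routine component bookkeeping of the first step.
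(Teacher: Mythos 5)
Your proposal follows essentially the same route as the paper's proof: for $(1)\Leftrightarrow(3)$ it invokes Proposition~\ref{pro:repnplie} and then expands the two admissibility identities on the semi-direct product into $A$- and $V$-components with the substitutions $u=0$, $v=0$; for $(2)\Leftrightarrow(3)$ it applies the proven equivalence to the dual data and identifies the resulting conditions via Lemma~\ref{lem:dualrep}, double duality, and precisely the paper's key observation that $-\vp^*$ dualizes back to $\vp$ while $\rho^*-\vp^*$ dualizes to $\vp-\rho$, so that \eqref{eq:hj} is only recovered from the dual $\rho$-equation after subtracting \eqref{eq:hj2}. The single slip is bookkeeping: the substitutions distribute crosswise (in the expansion, $u=0$ yields \eqref{eq:dual3} from the first identity and \eqref{eq:hj} from the second, while $v=0$ yields \eqref{eq:hj2} and \eqref{eq:dual2}), rather than $u=0$ giving all of (c) and $v=0$ all of (d) as you state, but the four equations obtained in total are the same, so the argument is unaffected.
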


 \begin{proof} $(\ref{it:ss1}\Leftrightarrow\ref{it:ss3}):$ By Proposition \ref{pro:repnplie}, $(A\ltimes_{\rho, \vp} V, N+\a)$ is a Nijenhuis pre-Lie algebra if and only if $(V, \rho, \vp, \a)$ is a representation of $((A, \ci), N)$. To express the condition that $S+\beta$ on $A\oplus V$ is admissible to $(A\ltimes_{\rho, \vp} V, N+\a)$  by explicit formulas, we perform the following calculations. For $x, y\in A$ and $u, v\in V$, we have
 \begin{align*}
 (S+\beta)((N+\alpha)(x+u)\ci_{\heartsuit} (y+v))&=S(N(x)\ci y)+\beta(\rho(N(x))v)+\beta(\vp(y)\alpha(u)),\\
 ((x+u)\ci_{\heartsuit} (S+\beta)^2(y+v))&=x\ci S^2(y)+\rho(x)\beta^2(v)+\vp(S^2(y))u,\\
 (N+\alpha)(x+u)\ci_{\heartsuit} (S+\beta)(y+v)&=N(x)\ci S(y)]+\rho(N(x))\beta(v)+\vp(S(y))\alpha(u),\\
 (S+\beta)((x+u)\ci_{\heartsuit} (S+\beta)(y+v))&=S(x\ci S(y))+\beta(\rho(x)\beta(v))+\beta(\vp(S(y))u),\\
 (S+\beta)((x+u)\ci_{\heartsuit} (N+\alpha)(y+v))&=S(x\ci N(y))+\beta(\rho(x)\alpha(v)))+\beta(\vp(N(y))u),\\
 (S+\beta)^2(x+u)\ci_{\heartsuit} (y+v)&=S^2(x)\ci y+\rho(S^2(x))v+\vp(y)\beta^2(u),\\
 (S+\beta)(x+u)\ci_{\heartsuit} (N+\alpha)(y+v)&=S(x)\ci N(y)+\rho(S(x))\alpha(v)+\vp(N(y))\beta(u),\\
 (S+\beta)((S+\beta)(x+u)\ci_{\heartsuit} (y+v))&=S(S(x)\ci y)+\beta(\rho(S(x))v)+\beta(\vp(y)\beta(u)).
 \end{align*}
 Therefore Eq.\,(\ref{eq:dual3-1}) holds (where $N$ is replaced by $N+\a$, $S$ by $S+\beta$, $x$ by $x+u$, $y$ by $y+v$) if and only if Eq.\,(\ref{eq:dual3-1}) (corresponding to $u=v=0$), Eq.\,(\ref{eq:dual3}) (corresponding to $y=u=0$) and Eq.\,(\ref{eq:hj2}) hold, where $x$ is replaced by $y$, $v$ by $u$ (corresponding to $x=v=0$). Eq.\,(\ref{eq:dual2-1}) holds (where $N$ is replaced by $N+\a$, $S$ by $S+\beta$, $x$ by $x+u$, $y$ by $y+v$) if and only if Eq.\,(\ref{eq:dual2-1}) (corresponding to $u=v=0$), Eq.\,(\ref{eq:dual2}) (where $x$ is replaced by $y$, $v$ by $u$ (corresponding to $x=v=0$)) and Eq.\,(\ref{eq:hj}) hold (corresponding to $y=u=0$). Hence Item \ref{it:ss1} $\Leftrightarrow$ Item \ref{it:ss3}.

 $(\ref{it:ss2}\Leftrightarrow\ref{it:ss3}):$ Based on $\ref{it:ss1}\Leftrightarrow\ref{it:ss3}$, replace $V$ by $V^*$, $\rho$ by $\rho^*-\vp^*$, $\vp$ by $-\vp^*$, $\beta$ by $\a^*$, $\a$ by $\beta^*$. Then we have \ref{it:ss2} holds if and only if
 \begin{enumerate}[(i)]
 \item \label{it:i1} $\big(V^*, \rho^*-\vp^*, -\vp^* , \beta^*\big)$ is a representation of $((A, \ci), N)$, that is, (\ref{it:ss3}\ref{it:as3}) holds;
 \item \label{it:i2} $S$ is admissible to $((A, \ci), N)$, that is, (\ref{it:ss3}\ref{it:as2}) holds;
 \item \label{it:i3} $\a^*$ is admissible to $(A,  N)$ on $\big(V^*, \rho^*-\vp^*, -\vp^* \big)$, that is, (\ref{it:ss3}\ref{it:as1}) holds;
 \item \label{it:i4} For all $x\in A$ and $v^*\in V^*$, the following equations hold:
 \begin{eqnarray}
 &&\a^*\big((\rho^*-\vp^*)(x)\b^*(v^*)\big)+(\rho^*-\vp^*)\big(S^2(x)\big)v^*=(\rho^*-\vp^*)\big(S(x)\big)\b^*(v^*)
 +\a^*\big((\rho^*-\vp^*)(S(x))v^*\big),\label{eq:vb1}\\
 &&\a^*\big((-\vp^*)(x)\b^*(v^*)\big)+(-\vp^*)\big(S^2(x)\big)v^*=(-\vp^*)\big(S(x)\big)\b^*(v^*)
 +\a^*\big((-\vp^*)(S(x))v^*\big).\label{eq:vb2}
 \end{eqnarray}
 \end{enumerate}

 \noindent Then Eq.\,(\ref{eq:vb2}) holds if and only if Eq.\,(\ref{eq:hj2}) holds. Thus by Eq.\,(\ref{eq:vb2}), we obtain that Eq.\,(\ref{eq:vb1}) holds if and only if Eq.\,(\ref{eq:hj}) holds. Therefore Item \ref{it:ss2} holds if and only if Item \ref{it:ss3} holds.
 \end{proof}

Now we apply $\mathcal{O}$-operators to produce Nijenhuis pre-Lie bialgebras.

 \begin{thm}\label{thm:las2} Let $(A,N)$ be $\beta$-admisssible on $(V, \rho, \vp)$, $S:A\lr A$, $\a:V\lr V$ be linear maps. Let $T:V\lr A$ be a linear map which is identified as a element in $V^*\otimes A\subseteq (A\ltimes_{\rho^*-\vp^*, -\vp^* } V^*)\o (A\ltimes_{\rho^*-\vp^*, -\vp^* } V^*)$.
 \begin{enumerate}[(1)]
 \item \label{it:ufe} The element $r=T+\tau(T)$ is a symmetric solution of the $(S+\a^*)$-Nijenhuis \sss-equation in the Nijenhuis pre-Lie algebra $(A\ltimes_{\rho^*-\vp^*, -\vp^* }V^*, N+\beta^*)$ if and only if $T$ is a weak $\mathcal{O}$-operator associated to $(V, \rho, \vp)$ and $\a$, and $T \beta=S T$.
 \item \label{it:ufp} Assume that $(V, \rho, \vp, \a)$ is a representation of $((A, \ci), N)$. If $T$ is an $\mathcal{O}$-operator associated to $(V, \rho, \vp, \a)$ and $T\circ \beta=S\circ T$, then $r=T+\tau(T)$ is a symmetric solution of the $(S+\a^*)$-Nijenhuis \sss-equation in the Nijenhuis pre-Lie algebra $(A\ltimes_{\rho^*-\vp^*, -\vp^*}V^*, N+\beta^*)$. Moreover, if $(A,N)$ is $S$-admisssible and Eqs.(\ref{eq:hj}) and (\ref{eq:hj2}) hold, then $(A\ltimes_{\rho^*-\vp^*, -\vp^*}V^*, N+\beta^*)$ is $(S+\a^*)$-admissible. In this case, there is a Nijenhuis pre-Lie bialgebra $((A\ltimes_{\rho^*-\vp^*, -\vp^*}V^*, N+\beta^*), \delta_{r}, S+\a^*)$, where $\delta_r$ is defined by Eq.\,(\ref{eq:cop}) with $r=T+\tau(T)$.
 \end{enumerate}
 \end{thm}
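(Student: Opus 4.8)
The plan is to transport the entire problem into the semi-direct product $\frakB:=A\ltimes_{\rho^*-\vp^*,-\vp^*}V^*$ and apply the operator reformulation of the $S$-\N \sss-equation. First I would record that the standing $\b$-admissibility hypothesis means, via Lemma \ref{lem:dualrep}, exactly that $((V^*,\rho^*-\vp^*,-\vp^*),\b^*)$ is a representation of $((A,\ci),N)$; by Proposition \ref{pro:repnplie} this makes $(\frakB,N+\b^*)$ a \N pre-Lie algebra, so that the $(S+\a^*)$-\N \sss-equation in $\frakB$ and its admissibility are even meaningful. Identifying $T$ with $\sum_i e_i^*\o T(e_i)\in V^*\o A$ for a basis $\{e_i\}$ of $V$, the symmetric tensor $r=T+\tau(T)$ lies in $\frakB\o\frakB$, and a direct pairing computation under $\frakB^*=A^*\oplus V$ and $\frakB=A\oplus V^*$ gives $\rr|_V=T$ and $\rr|_{A^*}=T^*$.

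For part (\ref{it:ufe}) I would invoke Corollary \ref{cor:4c3} applied to $(\frakB,N+\b^*)$ with $S$ replaced by $S+\a^*$: the element $r$ is a symmetric solution of the $(S+\a^*)$-\N \sss-equation if and only if $\rr$ is a weak $\mathcal{O}$-operator associated to the dual of the regular representation of $\frakB$ and $(S+\a^*)^*$. The defining identity Eq.\,\eqref{eq:z4} for $\rr$ is, by the standard correspondence between $\mathcal{O}$-operators and symmetric solutions of the \sss-equation in a semi-direct product (see \cite{Bai2}), equivalent to Eq.\,\eqref{eq:z4} for $T$. The Nijenhuis identity Eq.\,\eqref{eq:z5} for $\rr$ reads $(N+\b^*)\circ\rr=\rr\circ(S+\a^*)^*$; since $N+\b^*$ and $(S+\a^*)^*=S^*+\a$ are block-diagonal on $A\oplus V^*$ and $A^*\oplus V$ respectively, evaluating on $V$ gives $N\circ T=T\circ\a$ and evaluating on $A^*$ gives $\b^*\circ T^*=T^*\circ S^*$, that is $T\b=ST$. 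Hence $r$ solves the $(S+\a^*)$-\N \sss-equation if and only if $T$ is a weak $\mathcal{O}$-operator associated to $(V,\rho,\vp)$ and $\a$ (Definition \ref{defi:4d1}, i.e. Eqs.\,\eqref{eq:z4} and \eqref{eq:z5}) together with $T\b=ST$, which is the assertion.

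For part (\ref{it:ufp}), the first claim is immediate from part (\ref{it:ufe}), an $\mathcal{O}$-operator being in particular a weak $\mathcal{O}$-operator. For the ``moreover'' statement I would check that the four hypotheses in force --- $(V,\rho,\vp,\a)$ a representation of $((A,\ci),N)$, $(A,N)$ being $S$-admissible, $(A,N)$ being $\b$-admissible on $(V,\rho,\vp)$, and Eqs.\,\eqref{eq:hj}--\eqref{eq:hj2} --- are precisely conditions (\ref{it:as1})--(\ref{it:as4}) of Theorem \ref{thm:ss1}\,(\ref{it:ss3}). The implication $(\ref{it:ss3})\Rightarrow(\ref{it:ss2})$ of Theorem \ref{thm:ss1} then yields that $(\frakB,N+\b^*)$ is $(S+\a^*)$-admissible. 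Since $r$ is a symmetric solution of the $(S+\a^*)$-\N \sss-equation in this $(S+\a^*)$-admissible \N pre-Lie algebra, Theorem \ref{thm:nqt} produces the \N pre-Lie bialgebra $((\frakB,N+\b^*),\delta_r,S+\a^*)$ with $\delta_r$ given by Eq.\,\eqref{eq:cop}.

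The main obstacle is the first step of part (\ref{it:ufe}): verifying that Eq.\,\eqref{eq:z4} for $\rr$ in $\frakB$ collapses to Eq.\,\eqref{eq:z4} for $T$. This requires expanding the product $\ci_\star$ together with the dual actions $\rho^*-\vp^*$ and $-\vp^*$ on all component types of $\frakB^*$, and observing that only the $V\o V$ block contributes a genuine constraint --- exactly the $\mathcal{O}$-operator identity --- while the mixed blocks hold identically. As this is the purely pre-Lie, non-Nijenhuis computation already available in \cite{Bai2}, I would cite it rather than reproduce it; the genuinely new content is the short block-diagonal argument reducing Eq.\,\eqref{eq:z5} for $\rr$ to the two operator identities $NT=T\a$ and $T\b=ST$.
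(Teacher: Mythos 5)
Your proposal is correct and takes essentially the same route as the paper: both pass to the semi-direct product $A\ltimes_{\rho^*-\varphi^*,-\varphi^*}V^*$ (a Nijenhuis pre-Lie algebra exactly because of the standing $\beta$-admissibility hypothesis), use the operator characterization of the $S$-Nijenhuis $\mathbb{S}$-equation (Theorem \ref{thm:rr}, equivalently Corollary \ref{cor:4c3}) to split part (1) into the purely pre-Lie identity Eq.\,(\ref{eq:z4}) for $T$ plus the two operator identities $NT=T\alpha$ and $T\beta=ST$, and obtain part (2) from Theorem \ref{thm:ss1} (its implication from the third to the second condition) together with Theorem \ref{thm:nqt}, which you make explicit where the paper leaves it implicit. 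The only deviations are presentational --- you cite \cite{Bai2} where the paper reproduces the semi-direct-product computation, and you verify the constraint Eq.\,(\ref{eq:cybe-1}) by block evaluation of $r^{\#}$ on $V$ and on $A^*$ where the paper expands $((N+\beta^*)\otimes\mathrm{id})(r)$ and $(\mathrm{id}\otimes(S+\alpha^*))(r)$ in a dual basis --- though note that your aside that the mixed blocks in the Eq.\,(\ref{eq:z4}) computation ``hold identically'' is imprecise (they follow from the $\mathcal{O}$-operator identity by dualization, as the cancellation pattern in the paper's displayed computation shows), which is harmless here since that equivalence is precisely what the citation to \cite{Bai2} covers.
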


 \begin{proof} \ref{it:ufe} Let $\{e_1,e_2,...,e_n\}$ be a basis of $V$ and $\{e^1,e^2,...,e^n\}$ be it dual basis. Then $r=T+\tau(T)$ corresponds to $\sum\limits_{i=1}\limits^n \Big(T(e_i)\o e^i+e^i\o T(e_i)\Big) \in(A\ltimes_{(\rho^*-\vp^*, -\vp^*}V^*)\o (A\ltimes_{(\rho^*-\vp^*, -\vp^*}V^*)$. By Theorem \ref{thm:rr}, we find that $r=T+\tau(T)$ is a symmetric solution of the \sss-equation in $A\ltimes_{\rho^*-\vp^*, -\vp^* }V^*$ if and only if Eq.\,(\ref{eq:rr-1}) holds for the regular representation $(A\ltimes_{\rho^*-\vp^*, -\vp^* }V^*, \lll, \rrr)$ of $A\ltimes_{\rho^*-\vp^*, -\vp^* }V^*$.
Further details are given by the following computation. For all $a^*, b^*\in A^*$ and $u, v\in V$, we have
 \begin{eqnarray*}
 &&\rr(a^*+u)\ci_{\heartsuit} \rr(b^*+v)=\rr\Big((\lll^*-\rrr^*)(\rr(a^*+u))(b^*+v)-\rrr^*(\rr(b^*+v))(a^*+u)\Big)\\
 &&\qquad\qquad\qquad\qquad\qquad\bigg\Updownarrow~\hbox{by}~\rr(a^*+u)=T(u)+T^*(a^*)\\
 &&T(u)\ci T(v)+\rho^*(T(u))T^*(b^*)-\vp^*(T(u))T^*(b^*)-\vp^*(T(v))T^*(a^*)\\
 &&\hspace{5mm}=T(\rho(T(u))v)\cancel{-T(\vp(T(u))v)}+T^*(L^*(T(u))b^*)\bcancel{-\sum\limits_{i=1}\limits^n \langle a^*, T(\vp(T(e_i))v)e^i\rangle}\\
 &&\hspace{8mm}\cancel{+T(\vp(T(u))v)}-T^*(R^*(T(u))b^*)-\sum\limits_{i=1}\limits^n \langle a^*, T(\rho(T(e_i))v)e^i\rangle\bcancel{+\sum\limits_{i=1}\limits^n \langle a^*, T(\vp(T(e_i))v)e^i\rangle}\\
 &&\hspace{8mm}+T(\vp(T(v))u)-T^*(R^*(T(v))a^*)-\sum\limits_{i=1}\limits^n \langle b^*, T(\rho(T(e_i))u)e^i\rangle+\sum\limits_{i=1}\limits^n \langle b^*, T(\vp(T(e_i))u)e^i\rangle\\
 &&\hspace{5mm}=T(\rho(T(u))v)+T(\vp(T(v))u)\\
 &&\hspace{8mm}+T^*\big((L^*-R^*)(T(u))b^*\big)-\sum\limits_{i=1}\limits^n \langle b^*, T((\rho-\vp)(T(e_i))u)e^i\rangle\\
 &&\hspace{8mm}-\sum\limits_{i=1}\limits^n \langle a^*, T(\rho(T(e_i))v)e^i\rangle-T^*(R^*(T(v))a^*)\\
 &&\qquad\qquad\qquad\qquad\qquad\bigg\Updownarrow\\
 &&\qquad\qquad\quad T(u)\ci T(v)=T\Big(\rho\big(T(u)\big)v+\vp\big(T(v)\big)u\Big).
 \end{eqnarray*}

 Note that
 \begin{eqnarray*}
 \big((N+\beta^*)\o \id\big)(r)&=&\sum\limits_{i=1}\limits^n \Big(N (T(e_i))\o e^i+\beta^*(e^i)\o T(e_i)\Big), \\
 \big(\id\o (S+\a^*)\big)(r)&=&\sum\limits_{i=1}\limits^n \Big(T(e_i)\o\a^*(e^i)+e^i\o S (T(e_i))\Big).
 \end{eqnarray*}
 Further
 \begin{eqnarray*}
 \sum_{i=1}^n\beta^*(e^i)\o T(e_i)
 &=&\sum_{i=1}^n\sum_{j=1}^n \big\langle\beta^*(e^i), e_j\big\rangle e^j\o T(e_i)\\
 &=&\sum\limits_{j=1}\limits^n e^j\o \sum\limits_{i=1}\limits^n \big\langle e^i, \beta(e_j)\big\rangle T(e_i)\\
 &=&\sum\limits_{i=1}\limits^n e^i\o \sum\limits_{j=1}\limits^n \big\langle e^j, \beta(e_i)\big\rangle T(e_j)=\sum\limits_{i=1}\limits^n e^i\o T(\beta(e_i)),
 \end{eqnarray*}
 \begin{eqnarray*}
 \sum\limits_{i=1}\limits^n T(e_i)\o\a^*(e^i)
 &=&\sum\limits_{i=1}\limits^n\sum\limits_{j=1}\limits^n T(e_i)\o \big\langle\a^*(e^i), e_j\big\rangle e^j\\
 &=&\sum\limits_{j=1}\limits^n \Big(\sum\limits_{i=1}\limits^n T(e_i) \big\langle e^i, \a(e_j) \big\rangle\Big)\o e^j=\sum\limits_{i=1}\limits^n T(\a(e_i))\o e^i.
 \end{eqnarray*}
 Therefore $\big((N+\beta^*)\o \id\big)(r)=\big(\id\o (S+\a^*)\big)(r)$ if and only if $T\beta=S T$ and $N T=T\a$. Hence the conclusion follows.

 \ref{it:ufp} It follows Item \ref{it:ufe} and Theorem \ref{thm:ss1}.
 \end{proof}

In the general discussions in Theorem~\ref{thm:ss1}, we take $\beta=\pm\a$ or $-\a+\theta\id$ or $\beta=\theta\a^{-1}$. These cases all have the property that the double dual of a representation is itself. For simplicity, we introduce a Laurent series $\Pi\in K[t, t^{-1}]$ such that $\Pi(t)$ is either $\pm t$, or $-t+\theta$, or $\theta t^{-1}$ when $t$ is invertible and $0\neq \theta\in K$. Then the special cases above can be denoted by $\Pi(\a)$.

 To emphasize, for all $\Pi$ in the set
 \begin{eqnarray*}
 \{\pm t\}\cup(-t+K^{\times})\cup K^{\times}t^{-1},  \text{ where } K^{\times}:=K\setminus\{0\},
 \end{eqnarray*}
 we have $\Pi^2(\a)=\a$ and $\Pi(\a^*)={\Pi(\a)}^*$. Moreover, for any linear map $T:V\lr A$, it is obvious that $T\Pi(\a)=\Pi(N)T$ when $T\a=NT$.\\

 Applying Theorem \ref{thm:ss1}, we have

\begin{pro}\label{pro:vv1} Let $((A, \ci), N)$ be a Nijenhuis pre-Lie algebra, and $((V, \rho, \vp), \a)$ a representation of $((A, \ci), N)$. For $\Pi\in \{\pm t\}\cup(-t+K^{\times})\cup K^{\times}t^{-1}$, there is a Nijenhuis pre-Lie algebra $\big(A\ltimes_{\rho^*-\vp^*,-\vp^*}V^*, N+{\Pi(\a)}^*\big)$ such that $(\Pi(N)+\a^*)$-admissible if and only if the $\Pi$-admissible equations \big(associated to the quadruple $(V, \rho, \vp, \a)$\big) hold. In concrete terms, we have the following conclusions.
 \begin{enumerate}[(1)]
 \item \label{it:vv11}When $\Pi=\theta t$ with $\theta\in\{\pm1\}$, that is, $\beta=\theta\a$, $S=\theta N$, the $\Pi$-admissible equations are
 \begin{eqnarray}
 &x\ci N^2(y)+\theta N^2(x\ci y)=(1+\theta)N(x\ci N(y)),&\label{eq:kk3}\\
 &N^2(x)\ci y+\theta N^2(x\ci y)=(1+\theta)N(N(x)\ci y),&\label{eq:kk4}\\
 &\rho(x)\a^2(v)+\theta\a^2\big(\rho(x)v\big)=(1+\theta)\a\big(\rho(x)\a(v)\big),&\label{eq:kk5}\\
 &\vp(x)\a^2(v)+\theta\a^2\big(\vp(x)v\big)=(1+\theta)\a\big(\vp(x)\a(v)\big),&\label{eq:kk6}\\
 &\rho\big(N^2(x)\big)v+\theta\a^2\big(\rho(x)v\big)=(1+\theta)\a\Big(\rho\big(N(x)\big)v\Big),&\label{eq:kk7}\\
 &\vp\big(N^2(x)\big)v+\theta\a^2\big(\vp(x)v\big)=(1+\theta)\a\Big(\vp\big(N(x)\big)v\Big),&\label{eq:kk8}
 \end{eqnarray}
 where $x, y\in A, v\in V$.
 \item \label{it:vv22} When $\Pi=-t+\theta$ with $\theta\neq 0$, that is, $\beta=-\a+\theta\id_V$, $S=-N+\theta\id_A$, the $\Pi$-admissible equations are %Eqs.(\ref{eq:kk1})-(\ref{eq:kk2}) and
 \begin{eqnarray}
 &x\ci N^2(y)+\theta N(x\ci y)=N^2(x\ci y)+\theta x\ci N(y),&\label{eq:kkb1}\\
 &N^2(x)\ci y+\theta N(x\ci y)=N^2(x\ci y)+\theta N(x)\ci y,&\label{eq:kkb2}\\
 &\rho(x)\a^2(v)+\theta\a\big(\rho(x)v\big)=\a^2\big(\rho(x)v\big)+\theta\rho(x)\a(v),&\label{eq:kkb3}\\
 &\vp(x)\a^2(v)+\theta\a\big(\vp(x)v\big)=\a^2\big(\vp(x)v\big)+\theta\vp(x)\a(v),&\label{eq:kkb4}\\
 &\rho\big(N^2(x)\big)v+\theta\a\big(\rho(x)v\big)=\a^2\big(\rho(x)v\big)
 +\theta\rho\big(N(x)\big)v,&\label{eq:kkb5}\\
 &\vp\big(N^2(x)\big)v+\theta\a\big(\vp(x)v\big)=\a^2\big(\vp(x)v\big)
 +\theta\vp\big(N(x)\big)v,&\label{eq:kkb6}
 \end{eqnarray}
 where $x, y\in A, v\in V$.
 \item \label{it:vv33} When $\Pi=\theta t^{-1}$, $\theta\neq 0$ (in which case assume that $N$ and $\a$ is invertible), that is, $\beta=\theta\a^{-1}$, $S=\theta N^{-1}$, the $\Pi$-admissible equations are %Eqs.(\ref{eq:kk1})-(\ref{eq:kk2}) and
 \begin{eqnarray}
 &N\big(\theta x\ci y+x\ci N^2(y)\big)=(\theta\id_A+N^2)\big(x\ci N(y)\big),&\label{eq:kkc1}\\
 &N\big(\theta x\ci y+N^2(x)\ci y\big)=(\theta\id_A+N^2)\big(N(x)\ci y\big),&\label{eq:kkc2}\\
 &\a\Big(\theta \rho(x)v +\rho\big(N^2(x)\big)v\Big)
 =(\theta\id_V+\a^2)\Big(\rho\big(N(x)\big)v\Big),&\label{eq:kkc3}\\
 &\a\Big(\theta \vp(x)v +\vp\big(N^2(x)\big)v\Big)
 =(\theta\id_V+\a^2)\Big(\vp\big(N(x)\big)v\Big),&\label{eq:kkc4}\\
 &\a\Big(\theta \rho(x)v +\rho(x)\a^2(v)\Big)
 =(\theta\id_V+\a^2)\big(\rho(x)\a(v)\big),&\label{eq:kkc5}\\
 &\a\Big(\theta \vp(x)v +\vp(x)\a^2(v)\Big)
 =(\theta\id_V+\a^2)\big(\vp(x)\a(v)\big),&\label{eq:kkc6}
 \end{eqnarray}
 where $x, y\in A, v\in V$.
 \end{enumerate}
 \end{pro}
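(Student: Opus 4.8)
The plan is to obtain the statement directly from the equivalence $(\ref{it:ss2})\Leftrightarrow(\ref{it:ss3})$ of Theorem~\ref{thm:ss1}, specialized to the substitution $\b:=\Pi(\a)$ and $S:=\Pi(N)$. With this choice the Nijenhuis pre-Lie algebra of condition~$(\ref{it:ss2})$ becomes exactly $\big(A\ltimes_{\rho^*-\vp^*,-\vp^*}V^*, N+\Pi(\a)^*\big)$ and its admissible operator becomes $S+\a^*=\Pi(N)+\a^*$, which is precisely the data in the statement. Moreover, the first clause of $(\ref{it:ss3})$, that $(V,\rho,\vp,\a)$ be a representation of $((A,\ci),N)$, is the standing hypothesis of the proposition and so holds automatically. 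Hence Theorem~\ref{thm:ss1} reduces condition~$(\ref{it:ss2})$ to the conjunction of the $S$-admissibility of $((A,\ci),N)$ (Eqs.\,(\ref{eq:dual3-1}) and (\ref{eq:dual2-1})), the $\b$-admissibility on $(V,\rho,\vp)$ (Eqs.\,(\ref{eq:dual3}) and (\ref{eq:dual2})), and Eqs.\,(\ref{eq:hj}) and (\ref{eq:hj2}).

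It then remains to substitute $S=\Pi(N)$ and $\b=\Pi(\a)$ into these six equations and to verify that, for each of the three families of $\Pi$, they collapse to the corresponding list of $\Pi$-admissible equations. The only extra inputs are the Nijenhuis identity $(\ref{eq:n})$ for $(A,\ci)$ and the representation identities $(\ref{eq:nrep1})$ and $(\ref{eq:nrep2})$ for $((V,\rho,\vp),\a)$; since every manipulation is an algebraic rearrangement of identities that hold under the hypotheses, each step is \emph{reversible}, so the resulting equivalence is exactly the asserted biconditional. As a representative instance, in case~$(\ref{it:vv11})$ one has $\theta^2=1$, whence $S^2=N^2$ and $\b^2=\a^2$; substituting into Eq.\,(\ref{eq:dual3-1}) yields $\theta N(N(x)\ci y)+x\ci N^2(y)=\theta N(x)\ci N(y)+N(x\ci N(y))$, and using Eq.\,(\ref{eq:n}) to replace $N(x)\ci N(y)$ and cancelling the common term $\theta N(N(x)\ci y)$ gives precisely Eq.\,(\ref{eq:kk3}). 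In the same way Eq.\,(\ref{eq:dual2-1}) gives Eq.\,(\ref{eq:kk4}), while feeding Eq.\,(\ref{eq:nrep1}) into the transforms of Eqs.\,(\ref{eq:dual3}) and (\ref{eq:hj}) produces Eqs.\,(\ref{eq:kk5}) and (\ref{eq:kk7}); the $\vp$-analogues, via Eq.\,(\ref{eq:nrep2}), give Eqs.\,(\ref{eq:kk6}) and (\ref{eq:kk8}).

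Most of the effort is bookkeeping. For case~$(\ref{it:vv22})$, where $S=-N+\theta\,\id_A$ and $\b=-\a+\theta\,\id_V$, one has $S^2=N^2-2\theta N+\theta^2\id_A$ and $\b^2=\a^2-2\theta\a+\theta^2\id_V$; after substitution the $\theta^2$-terms cancel, Eq.\,(\ref{eq:n}) (respectively Eqs.\,(\ref{eq:nrep1}) and (\ref{eq:nrep2})) reduces the $\theta$-independent part, and the surviving $\theta$-linear terms assemble into Eqs.\,(\ref{eq:kkb1})--(\ref{eq:kkb6}). The genuinely delicate case is $(\ref{it:vv33})$, $\Pi=\theta t^{-1}$, where $S=\theta N^{-1}$ and $\b=\theta\a^{-1}$ introduce inverses, which is why invertibility of $N$ and $\a$ is assumed there. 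Here the raw substitution produces mixed terms such as $N^{-1}(N(x)\ci y)$ and $x\ci N^{-2}(y)$, and the inverses must be cleared by composing the whole equation with $N$ (respectively $\a$) and then invoking Eq.\,(\ref{eq:n}) (respectively Eqs.\,(\ref{eq:nrep1}) and (\ref{eq:nrep2})) to rewrite the remaining $N$--$N^{-1}$ cross terms; this is exactly how the inverse-free form with the factor $\theta\,\id_A+N^2$ (respectively $\theta\,\id_V+\a^2$) of Eqs.\,(\ref{eq:kkc1})--(\ref{eq:kkc6}) emerges. Carrying out this clearing consistently across all six equations is the main obstacle; once it is done, the equivalence established above yields the proposition.
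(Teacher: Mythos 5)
Your proposal is correct and takes essentially the same route as the paper: the paper's own proof likewise obtains the proposition by specializing Theorem~\ref{thm:ss1} to $\beta=\Pi(\a)$, $S=\Pi(N)$, deferring the computational details to the analogous result \cite[Proposition 4.22]{BGM}. Your write-up just makes those deferred details explicit—noting that condition (3)(a) of Theorem~\ref{thm:ss1} holds by hypothesis and using Eqs.~(\ref{eq:n}), (\ref{eq:nrep1}), (\ref{eq:nrep2}) (plus invertibility in the $\theta t^{-1}$ case) to reversibly reduce the six admissibility equations to the listed $\Pi$-admissible ones—and your sample computations are accurate.
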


\begin{proof}
The statements are consequences of Theorem \ref{thm:ss1} in the case that $\beta=\Pi(\a)$ and $S=\Pi(N)$. A detailed proof follows the one for \cite[Proposition 4.22]{BGM}.
\end{proof}

\begin{thm}\label{thm:last1}Let $((A, \ci), N)$ be a Nijenhuis pre-Lie algebra, $((V, \rho, \vp), \a)$ be a representation of $((A, \ci), N)$. Let $\Pi\in \{\pm t\}\cup(-t+K^{\times})\cup K^{\times}t^{-1}$.
 \begin{enumerate}[(1)]
 \item \label{it:last1} Let $((V, \rho, \vp), \Pi(\a))$ be an admissible quadruple of $((A, \ci), N)$. Then $r=T+\tau(T)$ is a symmetric solution of the $(\Pi(N)+\a^*)$-Nijenhuis \sss-equation in the Nijenhuis pre-Lie algebra $\big(A\ltimes_{\rho^*-\vp^*, -\vp^* }V^*, N+\Pi(\a^*)\big)$ if and only if $T$ is an $\mathcal{O}$-operator associated to $((V, \rho, \vp), \a)$.
 \item \label{it:last2} Assume the validity of the $\Pi$-admissible equations, given respectively by Eqs.\,(\ref{eq:kk3})--(\ref{eq:kk8}) for $\Pi=\pm t$, by Eqs.\,(\ref{eq:kkb1})--(\ref{eq:kkb6}) for $\Pi\in-t+K^{\times}$,  and by Eqs.\,(\ref{eq:kkc1})--(\ref{eq:kkc6}) for $\Pi\in K^{\times}t^{-1}$. Then there is a $(\Pi(N)+\a^*)$-admissible Nijenhuis pre-Lie algebra $\big(A\ltimes_{\rho^*-\vp^*, -\vp^* }V^*, N+\Pi(\a^*)\big)$. If $T$ is an $\mathcal{O}$-operator associted to $(V, \rho, \vp, \a)$, then $r=T+\tau(T)$ is a symmetric solution of the $(\Pi(N)+\a^*)$-Nijenhuis \sss-equation in the Nijenhuis pre-Lie algebra $\big(A\ltimes_{\rho^*-\vp^*, -\vp^* }V^*, N+\Pi(\a^*)\big)$. Furthermore there is a Nijenhuis pre-Lie bialgebra $\big((A\ltimes_{\rho^*-\vp^*, -\vp^* }V^*, N+\Pi(\a^*)), \delta, \Pi(N)+\a^*\big)$, where the linear map $\delta=\delta_r$ is defined by Eq.\,\eqref{eq:cop} with $r=T+\tau(T)$.
 \end{enumerate}
 \end{thm}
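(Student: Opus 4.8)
The plan is to recognize Theorem~\ref{thm:last1} as the specialization of Theorem~\ref{thm:las2}, together with its companion results Theorem~\ref{thm:ss1} and Proposition~\ref{pro:vv1}, to the distinguished choice $\beta=\Pi(\a)$ and $S=\Pi(N)$. Three elementary properties of $\Pi$ make this specialization clean and supply essentially all the new input: $\Pi^2(\a)=\a$, the compatibility $\Pi(\a^*)=\Pi(\a)^*$ with dualization, and the implication $T\a=NT \Rightarrow T\Pi(\a)=\Pi(N)T$ for any linear map $T\colon V\lr A$. The last holds case by case: for $\Pi=\pm t$ and $\Pi=-t+\theta$ it is immediate from linearity, while for $\Pi=\theta t^{-1}$ (with $N,\a$ invertible) the relation $T\a=NT$ gives $T\a^{-1}=N^{-1}T$, whence $T\Pi(\a)=\theta\,T\a^{-1}=\theta\,N^{-1}T=\Pi(N)T$.

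For Item~\ref{it:last1}, I would apply Item~\ref{it:ufe} of Theorem~\ref{thm:las2} with $\beta=\Pi(\a)$ and $S=\Pi(N)$. The standing hypothesis that $((V,\rho,\vp),\Pi(\a))$ is an admissible quadruple of $((A,\ci),N)$ is exactly the requirement there that $(A,N)$ be $\Pi(\a)$-admissible on $(V,\rho,\vp)$, so that result yields: $r=T+\tau(T)$ is a symmetric solution of the $(\Pi(N)+\a^*)$-Nijenhuis \sss-equation in $(A\ltimes_{\rho^*-\vp^*,-\vp^*}V^*, N+\Pi(\a^*))$ if and only if $T$ is a weak $\mathcal{O}$-operator associated to $(V,\rho,\vp)$ and $\a$ and $T\Pi(\a)=\Pi(N)T$. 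Since $((V,\rho,\vp),\a)$ is assumed to be a representation of $((A,\ci),N)$, a weak $\mathcal{O}$-operator is automatically an $\mathcal{O}$-operator (Definition~\ref{defi:4d1}); moreover its defining relation $NT=T\a$ forces $T\Pi(\a)=\Pi(N)T$ by the observation above, so the side condition is redundant. Hence the combined condition collapses to ``$T$ is an $\mathcal{O}$-operator associated to $((V,\rho,\vp),\a)$'', which is the assertion.

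For Item~\ref{it:last2}, I would first invoke Proposition~\ref{pro:vv1}: the $\Pi$-admissible equations are precisely equivalent to the statement that the semi-direct product $(A\ltimes_{\rho^*-\vp^*,-\vp^*}V^*, N+\Pi(\a^*))$ is a $(\Pi(N)+\a^*)$-admissible \N pre-Lie algebra, which is the first conclusion. These equations contain in particular the admissibility of $\Pi(\a)$ on $(V,\rho,\vp)$, i.e.\ the hypothesis of Item~\ref{it:last1}, so Item~\ref{it:last1} applies and shows that whenever $T$ is an $\mathcal{O}$-operator associated to $(V,\rho,\vp,\a)$, the symmetric element $r=T+\tau(T)$ solves the $(\Pi(N)+\a^*)$-Nijenhuis \sss-equation in that algebra. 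Finally I would feed these two facts into Theorem~\ref{thm:nqt}, taking the pre-Lie algebra to be $A\ltimes_{\rho^*-\vp^*,-\vp^*}V^*$, its Nijenhuis operator to be $N+\Pi(\a^*)$, and $\Pi(N)+\a^*$ to play the role of the map $S$: the $(\Pi(N)+\a^*)$-admissibility supplies the admissibility hypothesis, while the symmetric solution $r$ supplies Eq.~(\ref{eq:cybe}) together with Eq.~(\ref{eq:cybe-1}), so the conclusion is the \N pre-Lie bialgebra $\big((A\ltimes_{\rho^*-\vp^*,-\vp^*}V^*, N+\Pi(\a^*)), \delta_r, \Pi(N)+\a^*\big)$.

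The argument is structural rather than computational, so I expect the main obstacle to be bookkeeping: keeping straight that the algebra carries the Nijenhuis operator $N+\Pi(\a^*)$ while $\Pi(N)+\a^*$ is the ``$S$-map'' governing $S$-admissibility and the coalgebra side, and verifying that the decorations produced by Proposition~\ref{pro:vv1} and by Theorem~\ref{thm:las2} match the hypotheses of Theorem~\ref{thm:nqt} verbatim under the identifications $\Pi(\a^*)=\Pi(\a)^*$ and $T\Pi(\a)=\Pi(N)T$. No identity beyond these and $\Pi^2(\a)=\a$ is needed.
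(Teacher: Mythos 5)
Your proposal is correct and takes essentially the same route as the paper: the paper proves Item (\ref{it:last1}) by citing Theorem \ref{thm:las2}\,(\ref{it:ufe}) and Item (\ref{it:last2}) by citing Proposition \ref{pro:vv1} together with Theorem \ref{thm:las2}\,(\ref{it:ufp}), which is precisely the specialization $\beta=\Pi(\a)$, $S=\Pi(N)$ you carry out, with the identities $\Pi^2(\a)=\a$, $\Pi(\a^*)=\Pi(\a)^*$ and the redundancy of $T\Pi(\a)=\Pi(N)T$ supplying the glue. Your only deviation---obtaining the final bialgebra from Item (\ref{it:last1}) plus Theorem \ref{thm:nqt} instead of quoting the last clause of Theorem \ref{thm:las2}\,(\ref{it:ufp})---merely unpacks the same argument, since that clause is itself an instance of the quasitriangular construction of Theorem \ref{thm:nqt}.
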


 \begin{proof}
 \ref{it:last1} follows from Theorem \ref{thm:las2}\,\ref{it:ufe}.

 \ref{it:last2} follows from Proposition \ref{pro:vv1} and Theorem \ref{thm:las2}\,\ref{it:ufp}.
 \end{proof}

 \section{Balanced Nijenhuis pre-Lie bialgebras}\label{se:balance} In this section, we obtain a condition for a pre-Lie bialgebra to be a Lie bialgebra. We then give an application to Nijenhuis Lie bialgebras. Let us recall from \cite{Dr} the definition of Lie bialgebra. A {\bf Lie coalgebra} is a pair $(A, \d)$, where $A$ is a vector space and $\d:A\lr A\o A$ (we use the Sweedler notation \cite{Sw}: $\d(x)=x_{[1]}\o x_{[2]}$ ($=\sum x_{[1]}\o x_{[2]}$)) is a linear map such that for all $x\in A$,
 \begin{eqnarray*}
 &x_{[1]}\o x_{[2]}=-x_{[2]}\o x_{[1]},&\label{eq:coanti}\\
 &x_{[1]}\o x_{[2][1]}\o x_{[2][2]}+x_{[2][1]}\o x_{[2][2]}\o x_{[1]}+ x_{[2][2]}\o x_{[1]}\o x_{[2][1]}=0.&\label{eq:coj}
 \end{eqnarray*}

 A {\bf Lie bialgebra} is a triple $(A, [,], \d)$, where the pair $(A, [,])$ is a Lie algebra, and the pair $(A, \d)$ is a Lie coalgebra, such that, for all $x, y\in A$,
 \begin{eqnarray}
 &\d([x, y])=[x, y_{[1]}]\o y_{[2]}+ y_{[1]}\o [x, y_{[2]}]-[y, x_{[1]}]\o x_{[2]}-x_{[1]}\o [y, x_{[2]}].& \label{eq:liebialg}
 \end{eqnarray}

 \begin{defi}\label{de:balance} A pre-Lie bialgebra $(A, \ci, \D)$ is called {\bf balanced} if
 \begin{eqnarray}
 &x_{(1)}\ci y\o x_{(2)}+y_{(2)}\o y_{(1)}\ci x=y_{(1)}\ci x\o y_{(2)}+x_{(2)}\o x_{(1)}\ci y,\quad x, y\in A.&\label{eq:balance}
 \end{eqnarray}
 \end{defi}

 \begin{ex} \label{ex:balance-1}
 All the pre-Lie bialgebras in Example \ref{ex:nliebialg} are balanced.
 \end{ex}

 \begin{pro}\label{pro:balance-a} Let $(A, \ci, \D)$ be a pre-Lie bialgebra such that it is commutative $($$x\ci y=y\ci x$ for all $x, y\in A$$)$ and cocommutative $($$x_{(1)}\o x_{(2)}=x_{(2)}\o x_{(1)}$ for all $x\in A$$)$. Then $(A, \ci, \D)$ is balanced.
 \end{pro}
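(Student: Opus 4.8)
The plan is to deduce the balanced identity \eqref{eq:balance} from commutativity together with a single pre-Lie bialgebra compatibility condition, namely Eq.~\eqref{eq:b1}. First I would observe that, because $\ci$ is commutative, the left-hand side of Eq.~\eqref{eq:b1} vanishes: the equality $x\ci y=y\ci x$ forces $(x\ci y)_{(1)}\o(x\ci y)_{(2)}=(y\ci x)_{(1)}\o(y\ci x)_{(2)}$. Hence the right-hand side of Eq.~\eqref{eq:b1} must vanish as well, and the whole proof reduces to exploiting this vanishing.

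Next I would rewrite that right-hand side using commutativity in each term, replacing $x\ci y_{(1)}$ by $y_{(1)}\ci x$, $x\ci y_{(2)}$ by $y_{(2)}\ci x$, $y\ci x_{(1)}$ by $x_{(1)}\ci y$, and $y\ci x_{(2)}$ by $x_{(2)}\ci y$. Then the two resulting terms $y_{(1)}\o y_{(2)}\ci x$ cancel against each other, and likewise the two terms $x_{(1)}\o x_{(2)}\ci y$ cancel, leaving exactly
\begin{eqnarray*}
y_{(1)}\ci x\o y_{(2)}-x_{(1)}\ci y\o x_{(2)}=0.
\end{eqnarray*}
I will call this companion identity $y_{(1)}\ci x\o y_{(2)}=x_{(1)}\ci y\o x_{(2)}$ by the name $(\ast)$.

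Finally, applying the flip map $\tau$ (interchanging the two tensor slots) to $(\ast)$ produces the second identity $y_{(2)}\o y_{(1)}\ci x=x_{(2)}\o x_{(1)}\ci y$. The balanced equation \eqref{eq:balance} is then immediate: its first summands on the two sides agree by $(\ast)$, and its second summands on the two sides agree by the flipped version of $(\ast)$, so adding the two identities gives \eqref{eq:balance} term by term.

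There is no serious obstacle; the only care required is the bookkeeping of the four middle terms that cancel after substituting commutativity into Eq.~\eqref{eq:b1}. I note in passing that the argument uses only commutativity and \eqref{eq:b1}: cocommutativity, although assumed in the statement, does not appear to be needed here, and one could run the symmetric argument from \eqref{eq:b2} instead.
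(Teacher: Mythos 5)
Your proof is correct and follows essentially the same route as the paper: both extract the key identity $x\ci y_{(1)}\o y_{(2)}=y\ci x_{(1)}\o x_{(2)}$ from Eq.~\eqref{eq:b1} by letting commutativity kill the left-hand side and cancel the middle terms, and then read off Eq.~\eqref{eq:balance} (your explicit flip-map step just spells out what the paper calls ``automatic''). Your closing remark is also a genuine, correct sharpening: the cancellations use only commutativity, so cocommutativity is not actually needed for this implication, even though the paper's one-line proof nominally invokes it.
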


 \begin{proof} By Eq.\,(\ref{eq:b1}), we obtain $x\ci y_{(1)}\o y_2=y\ci x_{(1)}\o x_{(2)}$ since $(A, \ci, \D)$ is commutative and cocommutative. Then Eq.\,(\ref{eq:balance}) holds automatically, i.e., $(A, \ci, \D)$ is balanced.
 \end{proof}

 \begin{thm}\label{thm:balance} Let $(A, \circ, \D)$ be a pre-Lie bialgebra. Define
 \begin{eqnarray}
 &[x, y]:=x\circ y-y\circ x,\quad \d(x):=x_{[1]}\o x_{[2]}:=x_{(1)}\o x_{(2)}-x_{(2)}\o x_{(1)}, \quad x, y\in A,&\label{eq:comm}
 \end{eqnarray}
 then $(A, [,], \d)$ is a Lie bialgebra if and only if $(A, \circ, \D)$ is balanced.
 \end{thm}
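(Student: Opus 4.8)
The plan is to separate the two structure axioms that hold unconditionally from the single compatibility condition into which the balanced hypothesis feeds. First I would record that $(A,[\,,])$ is automatically a Lie algebra: the commutator $[x,y]=x\circ y-y\circ x$ of a (left) pre-Lie product satisfies the Jacobi identity, which is a direct consequence of the pre-Lie identity \eqref{eq:l}. Dually, writing $\delta=(\id-\tau)\Delta$ with $\tau(a\otimes b)=b\otimes a$ the flip, co-antisymmetry $\tau\delta=-\delta$ is immediate, while the co-Jacobi identity is the linear dual of the previous fact and follows from the pre-Lie coalgebra axiom \eqref{eq:cl}. Since neither argument uses \eqref{eq:balance}, the theorem reduces to proving that the Lie bialgebra compatibility \eqref{eq:liebialg} is equivalent to the balanced condition \eqref{eq:balance}.

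The core of the proof is then a direct comparison of the two sides of \eqref{eq:liebialg} in Sweedler notation. On the left, since $\delta=(\id-\tau)\Delta$ and $[x,y]=x\circ y-y\circ x$, one has
\[
\delta([x,y])=(\id-\tau)\bigl(\Delta(x\circ y)-\Delta(y\circ x)\bigr),
\]
and the bracketed difference is precisely the left-hand side of the pre-Lie bialgebra compatibility \eqref{eq:b1}. Substituting the right-hand side of \eqref{eq:b1} and applying $\id-\tau$ yields an explicit twelve-term expression for $\delta([x,y])$. I emphasize that only \eqref{eq:b1} is needed: the antisymmetrizer $\id-\tau$ already builds in the cocommutator, so \eqref{eq:b2} plays no role. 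On the right of \eqref{eq:liebialg}, I would expand each of the four terms using $x_{[1]}\otimes x_{[2]}=x_{(1)}\otimes x_{(2)}-x_{(2)}\otimes x_{(1)}$ together with $[x,-]=x\circ(-)-(-)\circ x$, producing sixteen terms.

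Subtracting the two expansions is where the conclusion crystallizes: twelve of the sixteen terms on the right match the twelve terms of $\delta([x,y])$ and cancel, and the four survivors assemble exactly into
\[
y_{(1)}\circ x\otimes y_{(2)}-y_{(2)}\otimes y_{(1)}\circ x-x_{(1)}\circ y\otimes x_{(2)}+x_{(2)}\otimes x_{(1)}\circ y,
\]
which is the difference of the two sides of \eqref{eq:balance}. Thus \eqref{eq:liebialg} holds for all $x,y\in A$ if and only if \eqref{eq:balance} holds, giving the asserted equivalence. The main obstacle is entirely organizational, namely the term-by-term matching of the twelve cancelling summands: each monomial must be tracked through the flip $\tau$ with its Sweedler indices and sign so that precisely the four terms of \eqref{eq:balance} remain. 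I would keep this under control by grouping the monomials according to which of $x$ or $y$ contributes the comultiplication, and recording each term alongside its image under $\tau$.
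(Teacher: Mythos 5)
Your proof is correct and takes essentially the same route as the paper: both expand the two sides of the Lie bialgebra compatibility \eqref{eq:liebialg} in Sweedler notation, use the pre-Lie bialgebra axiom \eqref{eq:b1} (the paper remarks that \eqref{eq:b2} works equally well) to rewrite $\delta([x,y])$, and observe that after the twelve matching terms cancel, the four surviving terms are precisely the difference of the two sides of \eqref{eq:balance}. The paper's write-up is merely more terse, leaving the term-by-term cancellation to the reader, which your bookkeeping makes explicit.
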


 \begin{proof} First we know that $(A, [,])$ is a Lie algebra. Dually, $(A, \d)$ is a Lie coalgebra. For all $x, y\in A$, by Eq.\,(\ref{eq:comm}), we calculate
 \begin{eqnarray*}
 \hbox{LHS of~}\text{Eq}.\,(\ref{eq:liebialg})
 \hspace{-3mm}&\stackrel{}{=}&\hspace{-3mm}(x\circ y)_{(1)}\o (x\circ y)_{(2)}-(x\circ y)_{(2)}\o (x\circ y)_{(1)}\\
 &&\hspace{-3mm}-(y\circ x)_{(1)}\o (y\circ x)_{(2)}+(y\circ x)_{(2)}\o (y\circ x)_{(1)}\\
 \hbox{RHS of~Eq.}\,(\ref{eq:liebialg})
 \hspace{-3mm}&\stackrel{}{=}&\hspace{-3mm}x\circ y_{(1)}\o y_{(2)}-y_{(1)}\circ x\o y_{(2)}-x\circ y_{(2)}\o y_{(1)}+y_{(2)}\circ x\o y_{(1)}\\
 \hspace{-3mm}&&\hspace{-3mm}+y_{(1)}\o x\circ y_{(2)}-y_{(1)}\o y_{(2)}\circ x-y_{(2)}\o x\circ y_{(1)}+y_{(2)}\o y_{(1)}\circ x\\
 \hspace{-3mm}&&\hspace{-3mm}-y\circ x_{(1)}\o x_{(2)}+x_{(1)}\circ y\o x_{(2)}+y\circ x_{(2)}\o x_{(1)}-x_{(2)}\circ y\o x_{(1)}\\
 \hspace{-3mm}&&\hspace{-3mm}-x_{(1)}\o y\circ x_{(2)}+x_{(1)}\o x_{(2)}\circ y+x_{(2)}\o y\circ x_{(1)}-x_{(2)}\o x_{(1)}\circ y.
 \end{eqnarray*}
 Then by Eq.\,(\ref{eq:b1}) or  Eq.\,(\ref{eq:b2}), we find that Eq.\,(\ref{eq:liebialg}) holds for $[,]$ and $\d$ given in Eq.\,(\ref{eq:comm}) if and only if Eq.\,(\ref{eq:balance}) holds, i.e., $(A, \circ, \D)$ is balanced.
 \end{proof}

Let us recall from \cite{LM} the notion of Nijenhuis Lie bialgebra. A {\bf Nijenhuis Lie bialgebra} is a quintuple $(A, [,], \d, T, P)$, where
  \begin{enumerate}
    \item \label{it:nliebialg1} $((A, [,]), T)$ is Nijenhuis Lie algebra;
    \item \label{it:nliebialg2} $((A, \d), P)$ is Nijenhuis Lie coalgebra;
    \item \label{it:nliebialg3} $(A, [,], \d)$ is Lie bialgebra;
    \item \label{it:nliebialg4} For all $x, y\in A$, the equations below hold:
 \begin{eqnarray}
 &P([T(x), y])+[x, P^2(y)]=[T(x), P(y)]+P([x, P(y)]),&\label{eq:nliebialg1-1}\\
 &P(T(x)_{[1]})\o T(x)_{[2]}+x_{[1]}\o T^2(x_{[2]})=P(x_{[1]})\o T(x_{[2]})+T(x)_{[1]}\o T(T(x)_{[2]}).&\label{eq:nliebialg2-2}
 \end{eqnarray}
 \end{enumerate}

 Nijenhuis Lie bialgebra can be obtained from Nijenhuis pre-Lie bialgebra in the following way.

 \begin{thm}\label{thm:balance-1} Let $(A, \circ, \D, N, S)$ be a Nijenhuis pre-Lie bialgebra. Then $(A, [-,-], \d, N, S)$, where $[-,-]$ and $\d$ are given by Eq.\,\eqref{eq:comm}, is a Nijenhuis Lie bialgebra if and only if $(A, \circ, \D)$ is balanced.
 \end{thm}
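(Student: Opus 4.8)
The plan is to verify, one by one, the four defining conditions of a Nijenhuis Lie bialgebra (as recalled immediately above) for the quintuple $(A, [-,-], \d, N, S)$ with $T=N$ and $P=S$, and to observe that three of these hold automatically for any Nijenhuis pre-Lie bialgebra, while the remaining one is governed precisely by the balanced condition. The single device used throughout is to write $\d=(\id-\tau)\D$, where $\tau$ is the flip on $A\o A$; this is just a restatement of Eq.\,\eqref{eq:comm}.

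First I would dispose of the two structure conditions, neither of which needs the balanced hypothesis. That $((A, [-,-]), N)$ is a Nijenhuis Lie algebra follows by subtracting Eq.\,\eqref{eq:n} for the pair $(y, x)$ from Eq.\,\eqref{eq:n} for $(x, y)$: since $[x, y]=x\circ y-y\circ x$, this antisymmetrization converts the pre-Lie Nijenhuis identity into the Lie Nijenhuis identity $[N(x), N(y)]+N^2([x, y])=N([N(x), y])+N([x, N(y)])$. Dually, that $((A, \d), S)$ is a Nijenhuis Lie coalgebra follows by applying $\id-\tau$ to the pre-Lie coalgebra Nijenhuis identity Eq.\,\eqref{eq:nc}, a routine check using that $\id-\tau$ commutes with $S\o S$ and with $(S\o\id)+(\id\o S)$.

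Next I would treat the two compatibility equations \eqref{eq:nliebialg1-1} and \eqref{eq:nliebialg2-2} of a Nijenhuis Lie bialgebra. Expanding the brackets in the first via Eq.\,\eqref{eq:comm} separates it into two halves: one half is exactly the $S$-admissibility equation \eqref{eq:dual3-1}, while the other is Eq.\,\eqref{eq:dual2-1} after interchanging $x$ and $y$; both hold by hypothesis. For the second, I would rewrite both sides in operator form and substitute $\d=(\id-\tau)\D$; the untwisted part reproduces Eq.\,\eqref{eq:nliebialg2} verbatim, while the $\tau$-twisted part reproduces Eq.\,\eqref{eq:nliebialg1}. Since $\tau$ is injective, the coalgebra compatibility equation is equivalent to the conjunction of these two $N^*$-admissibility equations, which again hold by hypothesis. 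Thus conditions (1), (2) and (4) of a Nijenhuis Lie bialgebra are unconditional.

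The only condition left is that $(A, [-,-], \d)$ be a Lie bialgebra, and this is exactly the content of Theorem~\ref{thm:balance}: with $[-,-]$ and $\d$ defined by Eq.\,\eqref{eq:comm}, the triple $(A, [-,-], \d)$ is a Lie bialgebra if and only if $(A, \circ, \D)$ is balanced. Combining this equivalence with the three unconditional verifications yields the equivalence asserted in the theorem. The main obstacle is organizational rather than conceptual: in the second compatibility equation one must keep careful track of the flip $\tau$ when passing from the cocommutator $\d$ back to $\D$, so as to match the untwisted and twisted halves against Eqs.\,\eqref{eq:nliebialg2} and \eqref{eq:nliebialg1} respectively.
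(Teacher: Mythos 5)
Your proposal is correct and takes essentially the same route as the paper: the paper likewise verifies Eq.\,\eqref{eq:nliebialg1-1} by expanding via Eq.\,\eqref{eq:comm} into the two $S$-admissibility identities \eqref{eq:dual3-1} and \eqref{eq:dual2-1}, obtains Eq.\,\eqref{eq:nliebialg2-2} from Eqs.\,\eqref{eq:nliebialg1} and \eqref{eq:nliebialg2} by the analogous untwisted/twisted splitting, notes that $N$ and $S$ remain Nijenhuis operators on the induced Lie (co)algebra, and then invokes Theorem \ref{thm:balance} so that the balanced condition carries the whole equivalence. Only one harmless overstatement: injectivity of $\tau$ does not make Eq.\,\eqref{eq:nliebialg2-2} \emph{equivalent} to the conjunction of Eqs.\,\eqref{eq:nliebialg1} and \eqref{eq:nliebialg2} (a sum of two tensors can vanish without each vanishing), but you only use the implication from the conjunction, which is all the proof needs.
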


 \begin{proof} For all $x, y\in A$, one calculates
 \begin{eqnarray*}
 &&\hspace{-13mm}S([N(x), y])+[x, S^2(y)]-[N(x), S(y)]-S([x, S(y)])\\
 &&\hspace{-3mm}\stackrel{(\ref{eq:comm})}{=}\hspace{-1mm}N(x)\ci y-y\ci N(x)+x\ci S^2(y)-S^2(y)\ci x-N(x)\ci S(y)\\
 &&\qquad+S(y)\ci N(x)-S(x\ci S(y))+S(S(y)\ci x)\\
 &&\hspace{-5.5mm}\stackrel{(\ref{eq:dual3-1})(\ref{eq:dual2-1})}{=}0.
 \end{eqnarray*}
 So Eq.\,(\ref{eq:nliebialg1-1}) holds. Similarly, we obtain Eq.\,(\ref{eq:nliebialg2-2}) by Eqs.\,(\ref{eq:nliebialg1}) and (\ref{eq:nliebialg2}). Furthermore, $N$ (resp. $S$) is a Nijenhuis operator on the Lie algebra $(A, [-,-])$ (resp. Lie coalgebra $(A, \d)$) since $N$ is a Nijenhuis operator on the pre-Lie algebra $(A, \ci)$ (resp. pre-Lie coalgebra $(A, \D)$).  So applying Theorem \ref{thm:balance} completes the proof.
 \end{proof}

By Example \ref{ex:balance-1} and Theorem \ref{thm:balance-1}, we get the following Nijenhuis Lie bialgebras.

 \begin{ex}\label{ex:balance-2} Let $\vartheta, k_1, k_2, k_3, \ell_1, \ell_2, \ell_3$ be parameters. Assume that $(A, [-,-])$ is the Lie algebra induced by the pre-Lie algebra given in Example \ref{ex:sym}. Then $[-,-]$ is given by
 \begin{center}
   \begin{tabular}{r|cc}
          $[-,-]$ & $e$  & $f$  \\
          \hline
           $e$ & $0$  & $e$  \\
           $f$ & $-e$  &  $0$ \\
        \end{tabular}
   \end{center}
 \begin{enumerate}[(I)]
    \item Set
   \begin{center}$ \left\{
        \begin{array}{l}
        \d(e)=0,\\
        \d(f)=k_2 (f\o e-e\o f)
        \end{array}\right. $
             \end{center}
        Then $(A, \d)$ is a Lie coalgebra and further $(A, [,], \d)$ is a Lie bialgebra. Define the linear maps $N, S: A\lr  A$ by:
      \begin{center} $\left\{
        \begin{array}{l}
         N(e)=k_2 \ell_3 e\\
         N(f)=(k_1 \ell_3+k_2 \ell_2)e+k_2 \ell_3 f\\
         \end{array}
           \right.$,\quad $S=N$.
             \end{center}
 Then by Theorem \ref{thm:balance-1} we can know that $(A, [,], \d, N, S)$ is a \N Lie bialgebra.
    \item Set
   \begin{center}$ \left\{
        \begin{array}{l}
        \d(e)=k_3 (f\o e-e\o f),\\
        \d(f)=0
        \end{array}\right. $
             \end{center}
        Then $(A, \d)$ is a Lie coalgebra and further $(A, [,], \d)$ is a Lie bialgebra. Define the linear maps $N, S: A\lr  A$ by:
      \begin{center} $\left\{
        \begin{array}{l}
         N(e)=0\\
         N(f)=k_3 \ell_2 f\\
         \end{array}
           \right.$,\quad $\left\{
        \begin{array}{l}
         S(e)=k_3 \ell_2 e\\
         S(f)=k_3 \ell_2 f\\
         \end{array}
           \right.$
             \end{center}
 Then $(A, [,], \d, N, S)$ is a \N Lie bialgebra.
    \item Set
   \begin{center}$ \left\{
        \begin{array}{l}
        \d(e)=0,\\
        \d(f)=0
        \end{array}\right. $
             \end{center}
        Then $(A, \d)$ is a Lie coalgebra and further $(A, [,], \d)$ is a Lie bialgebra. Define the linear maps $N, S: A\lr  A$ by:
      \begin{center} $\left\{
        \begin{array}{l}
         N(e)=0\\
         N(f)=k_1 \ell_3 f\\
         \end{array}
           \right.$,\quad $\left\{
        \begin{array}{l}
         S(e)=0\\
         S(f)=\vartheta f\\
         \end{array}
           \right.$
             \end{center}
 Then $(A, [,], \d, N, S)$ is a \N Lie bialgebra.
 \end{enumerate}
 \end{ex}

\noindent{\bf Acknowledgments.}
This work is supported by National Natural Science Foundation of China (No. 12471033) and Natural Science Foundation of Henan Province (No. 242300421389).

\smallskip

\noindent
{\bf Declaration of interests.} The authors have no conflicts of interest to disclose.

\noindent
{\bf Data availability.} Data sharing is not applicable as no new data were created or analyzed.

\end{document}